\newlength{\defbaselineskip}
\newcommand{\setlinespacing}[1]%
           {\setlength{\baselineskip}{#1 \defbaselineskip}}
\theoremstyle{plain}
\newtheorem{thm}{Theorem}[section]
\newtheorem{cor}[thm]{Corollary}
\newtheorem{lem}[thm]{Lemma}
\theoremstyle{definition}
\newtheorem{defn}{Definition}[section]
\newtheorem{rem}{Remark}[section]
\newcommand{\mR}{\mathbb{R}}
\newcommand{\eps}{\varepsilon}
\newcommand{\la}{\langle}
\newcommand{\ra}{\rangle}
\newcommand{\mL}{\mathcal {L}}
\newcommand{\mF}{\mathcal {F}}
\newcommand{\mE}{\mathcal {E}}
\newcommand{\mP}{\mathcal {P}}
\newcommand{\mB}{\mathcal {B}}
\newcommand{\pat}{\partial}
\newcommand{\mY}{\mathcal {Y}}
\newcommand{\mX}{\mathcal {X}}
\newcommand{\mM}{\mathcal {M}}
\newcommand{\mA}{\mathcal {A}}
\newcommand{\wtil}{\widetilde}
\makeatletter\@addtoreset{equation}{section} \makeatother
\begin{document}
\title {Semi-linear Backward Stochastic Integral Partial Differential Equations driven by a Brownian motion and a Poisson point process}
\author{Shaokuan Chen, \ \ Shanjian Tang }
\maketitle  \noindent \textbf{Abstract:} In this paper we
investigate classical solution of a semi-linear system of backward
stochastic integral partial differential equations driven by a
Brownian motion and a Poisson point process. By proving an
It\^{o}-Wentzell formula for jump diffusions as well as an abstract
result of stochastic evolution equations, we obtain the stochastic
integral partial differential equation for the inverse of the
stochastic flow generated by a stochastic differential equation
driven by a Brownian motion and a Poisson point process. By
composing the random field generated by the solution of a backward
stochastic differential equation with the inverse of the stochastic
flow, we construct the classical solution of the system of backward
stochastic integral partial differential equations. As a result, we
establish a stochastic Feynman-Kac
formula.\\
\textbf{Keywords:} backward stochastic integral partial differential
equation, stochastic differential equation, backward stochastic
differential equation, Poisson point process, stochastic flow,
It\^{o}-Wentzell formula\\
\textbf{AMS Subject Classifications:} 60H10, 60H20, 35R09
\footnotetext[1]{Department of Financial Mathematics and Control
Science, School of Mathematical Sciences, Fudan University, Shanghai
200433, China. Research supported by NSFC under grant No. 10325101,
Basic Research Program of China (973 Program) under grant No.
2007CB814904. E-mail: skchen0118@gmail.com (Shaokuan Chen),
sjtang@fudan.edu.cn (Shanjian Tang).}
\section{Introduction}
Backward stochastic partial differential equations (BSPDEs) are
function space-valued backward stochastic differential equations
(BSDEs), the theories and applications of which can be found in
 \cite{BaBuPa}, \cite{Bismut}, \cite{DeTa}, \cite{KPQ}, \cite{PP1},
\cite{PP2}, \cite{YZ}, etc. BSPDEs arise from the study of
stochastic control problems. For example, they can serve as the
adjoint equations of Duncan-Mortensen-Zakai equation in the optimal
control of stochastic differential equations (SDEs) with incomplete
information (see \cite{Ben}, \cite{Tang2}, \cite{Tang3}). They also
appear as the adjoint equations in the stochastic maximum principle
of systems governed by stochastic partial differential equations
(SPDEs) driven by a Brownian motion (see \cite{Zhou2}) or driven by
both a Brownian motion and a Poisson random measure (see
\cite{OkPrZh}). A class of fully nonlinear BSPDEs, the so-called
backward stochastic Hamilton-Jacobi-Bellman (HJB) equations, were
proposed by Peng \cite{P2} in the study of the optimal control
problems for non-Markovian cases. And Englezos and Karatzas
\cite{EnKa} characterized the value function of a utility
maximization problem with habit formation as a classical solution of
the corresponding stochastic HJB equation, which gives a concrete
illustration of BSPDEs in a stochastic control context beyond the
classical linear quadratic case. More recently, Meng and Tang
\cite{MeTa} have studied the maximum principle of non-Markovian
stochastic differential systems driven only by Poisson point
processes and obtained a kind of backward stochastic HJB equations
of jump type.

The existence, uniqueness and regularity of the adapted solutions to
BSPDEs have been studied by many authors. See Hu and Peng \cite{HP},
Peng \cite{P2}, Zhou \cite{Zhou1}, Ma and Yong \cite{MY1},
\cite{MY2}, Hu, Ma and Yong \cite{HMY}, for non-degenerate and
degenerate cases. More recently, Du and Tang \cite{DuTa} explored
the Dirichlet problem, rather than the conventional Cauchy problem,
of the BSPDEs and established the result of existence and uniqueness
in weighted Sobolev spaces. Different from the above literatures
where the main instrument is operator semigroup or a prior estimates
for differential operators, Tang \cite{Tang} developed a
probabilistic approach to study the properties of solutions of
BSPDEs. To be precise, he constructed the solutions of BSPDEs in
terms of the inverse flows of the solutions of SDEs as well as the
solutions of BSDEs. As a result, the properties of the solutions of
BSPDEs can be obtained by the analysis of the solutions of SDEs and
BSDEs with a spacial parameter.

Let $(\Omega,\mathcal {F},\{\mathcal {F}_t\}_{0\leq t\leq T},P)$ be
a complete filtered probability space on which is defined a
$d$-dimensional standard Brownian motion $\{W_t,0\leq t\leq T\}$.
Denote by $\mathcal {P}$ the predictable sub-$\sigma$-field of
$\mathcal {B}([0,T])\otimes\mF$. Let $(E,\mE,v)$ be a measure space
with $v(E)<\infty$ and
$\mathbbm{p}:D_\mathbbm{p}\subset(0,\infty)\rightarrow E$ be an
$\mathcal {F}_t$-adapted stationary Poisson point process. Then the
counting measure induced by $\mathbbm{p}$ is defined by
$$N((0,t]\times U):=\sharp\{s\in D_\mathbbm{p};s\leq t, \mathbbm{p}(s)\in U\},\
\textrm{for}\ t>0,\ U\in\mE.$$ And
$\widetilde{N}(dedt):=N(dedt)-v(de)dt$ is a compensated Poisson
random measure which is assumed to be independent of the Brownian
motion.

This paper is concerned with the probabilistic interpretation of the
solution $(p,q,r)$ of the following system of backward stochastic
integral partial differential equations (BSIPDEs)
\begin{equation}
\left\{
\begin{array}{ccc}\label{a1}
\displaystyle
\begin{split}
  dp(t,x)=&-\big[\mathcal {L}(t,x)p(t-,x)+\mathcal
  {M}(t,x)q(t,x)\\
  &+f(t,x,p(t,x),q(t,x)+\partial
  p(t-,x)\sigma,r(t,\cdot,\phi_{t,\cdot}(x))-p(t-,x)+p(t-,\phi_{t,\cdot}(x)))\big]dt\\
  &+\int_E\big[r(t,e,x)-r(t,e,\phi_{t,e}(x))+p(t-,x)-p(t-,\phi_{t,e}(x))\big]v(de)dt\\
  &+q(t,x)dW_t+\int_Er(t,e,x)\widetilde{N}(dedt),\ (t,x)\in[0,T]\times\mathbb{R}^n,\\
  p(T,x)=&\varphi(x),
\end{split}
\end{array}
\right.
\end{equation}
where we have defined
\begin{eqnarray}
\begin{split}\label{oper}
  &\pat_i:=\frac{\pat}{\pat x^i},\ \pat^2_{ij}:=\frac{\pat^2}{\pat x^i\pat
  x^j},\ \pat p:=(\pat_jp^i)_{1\leq i\leq l,1\leq j\leq n},\\
  &\mL(t,x):=\frac{1}{2}\sum_{i,j=1}^{n}\sum_{r=1}^{d}\sigma^{ir}\sigma^{jr}(t,x)\pat^2_{ij}+\sum_{i=1}^{n}\big[b^i(t,x)-\int_Eg^i(t,e,x)v(de)\big]\pat_i,\\
  &\mM^r(t,x):=\sum_{i=1}^n\sigma^{ir}(t,x)\pat_i,\ 1\leq r\leq d,\\
  &\phi_{t,e}(x):=x+g(t,e,x),\\
  &\mathcal{L}p:=(\mathcal {L}p^1,\cdots,\mathcal {L}p^l)',\\
  &\mathcal {M}q:=(\sum_{r=1}^d\mathcal {M}^rq^{1r},\cdots,\sum_{r=1}^d\mathcal
  {M}^rq^{lr})'.\\
\end{split}
\end{eqnarray}
To be precise, under some suitable conditions on the random
coefficients $b,\ \sigma,\ g$ and $\varphi$, we can construct the
solution $(p,q,r)$ of the above system as follows
\begin{equation*}
\begin{split}\label{a2}
  p(t,x)=&Y_t(X_t^{-1}(x)),\\
  q(t,x)=&Z_t(X_{t-}^{-1}(x))-\pat p(t-,x)\sigma(t,x),\\
  r(t,e,x)=&p(t-,\phi_{t,e}^{-1}(x))-p(t-,x)+U_t(e,X_{t-}^{-1}(\phi_{t,e}^{-1}(x)))\\
\end{split}
\end{equation*}
where $(X_t(x),Y_t(x),Z_t(x),U_t(\cdot,x))_{t\in[0,T]}$ is the
solution of the following non-Markovian forward-backward stochastic
differential equation (FBSDE)
\begin{equation*}
\left\{
\begin{split}
dX_t&=b(t,X_t)dt+\sigma(t,X_t)dW_t+\int_Eg(t,e,X_{t-})\wtil{N}(dedt),\\
dY_t&=-f(t,X_t,Y_t,Z_t,U_t)dt+Z_tdW_t+\int_EU_t(e)\wtil{N}(dedt),\\
X_0&=x,\ Y_T=\varphi(X_T),\ t\in[0,T],
\end{split}
\right.
\end{equation*}
and $\{X_t^{-1}(x),\ (t,x)\in[0,T]\times\mathbb{R}^n\}$ is the
inverse mapping of $\{X_t(x),\ (t,x)\in[0,T]\times\mathbb{R}^n\}$
with respect to the spacial variable $x$. The main contributions of
this paper lie in establishing an It\^{o}-Wentzell formula for jump
diffusions and deriving the SIPDE satisfied by the inverse flow
$\{X_t^{-1}(x),\ (t,x)\in[0,T]\times\mathbb{R}^n\}$. As demonstrated
in Tang \cite{Tang}, the above two facts play crucial roles in the
construction of the classical solutions to BSPDEs. Moreover, by the
analysis of solutions of BSDEs driven by a Brownian motion and a
Poisson point process, we generalize Tang's result to the jump case.

The paper is organized as follows. Section 2 contains the notations
that are used throughout the paper and some preliminary results.
Section 3 is concerned with the derivation of the SIPDE for the
inverse flow $\{X_t^{-1}(x),\ (t,x)\in[0,T]\times\mathbb{R}^n\}$.
Section 4 consists of the estimates of the solutions of BSDEs.
Section 5 is constituted the connection between BSIPDEs and
non-Markovian FBSDEs.
\section{Notations and Preliminary results}
Let $\mathbb{E}$ be a Euclidean space. The inner product in
$\mathbb{E}$ is denoted by $\la\cdot,\cdot\ra$ and the norm in
$\mathbb{E}$ is denoted by $|\cdot|_{\mathbb{E}}$ or simply by
$|\cdot|$ when there is no confusion. Let
$\gamma:=(\gamma^1,\cdots,\gamma^n)$ be a multi-index with
nonnegative integers $\gamma^i$, $i=1,\cdots,n$. Denote
$|\gamma|:=\gamma^1+\cdots+\gamma^n$. For a function $u$ defined on
$\mR^n$, $\pat^\gamma u$ means the derivative of $u$ of order
$|\gamma|$, of order $\gamma^i$ with respect to $x^i$. $u_i$ means
the derivative of $u$ with respect to $x^i$ and $u_{ij}$ means the
derivative of $u$ with respect to $x^ix^j$, respectively. $\pat u$
stands for the the gradient of $u$ and $\pat^2u$ stands for the
hessian of $u$, respectively. We use the convention that repeated
indices imply summation.

Let $\mathbb{B}$ be a Banach space with the norm
$\|\cdot\|_\mathbb{B}$. For an integer $m\geq0$ and some subset
$K\subseteq\mR^n$, we denote by $C^m(K;\mathbb{B})$ the set of
mappings $f:K\rightarrow \mathbb{B}$ which are $m$-times
continuously differentiable. And denote the norm in this space by
$$\|f\|_{C^m(K;\mathbb{B})}:=\sum_{0\leq|\gamma|\leq m}\sup_{x\in K}\|\pat^\gamma f(x)\|_\mathbb{B}.$$
If there is no danger of confusion, $C^m(K;\mathbb{B})$ will be
abbreviated as $C^m(K)$. Denote by $D(0,T;\mathbb{B})$ the set of
all $\mathbb{B}$-valued cadlag functions on the interval $[0,T]$.

For $p>1$ and integer $m\geq0$, we denote by $W^m_p$ the Sobolev
space of real functions on $\mR^n$ with a finite norm
$$\|u\|_{m,p}:=\bigg(\sum_{0\leq|\gamma|\leq m}\int_{\mR^n}|\pat^\gamma
u|^pdx\bigg)^{1/p}.$$ The inner product and norm in $W^m_2$ will be
denoted by $(\cdot,\cdot)_m$ and $\|\cdot\|_m$, respectively.

We further introduce some other spaces that will be used in the
paper. Let $\mathcal {X}$ (resp., $\mathcal {Y}$) be a
sub-$\sigma$-algebra of $\mF$ (resp., $\mE$). $L^p(\mX;\mathbb{B})$
(resp., $L^p(\mY;\mathbb{B})$) denotes the set of all
$\mathbb{B}$-valued $\mX$-measurable (resp., $\mY$-measurable)
random variable $\eta$ such that $E\|\eta\|_\mathbb{B}^p<\infty$
(resp., $\int_E\|\eta\|_\mathbb{B}^pv(de)<\infty$). For given two
real numbers $1\leq p,\ k\leq\infty$, we denote by
$\mL^p_{\mF}(0,T;L^k(\mF;\mathbb{B}))$ the set of all adapted
$\mathbb{B}$-valued processes $X$ such that
$$\|X\|_{\mL^p_{\mF}(0,T;L^k(\mF;\mathbb{B}))}:=\bigg(\int_0^T[E\|X(t)\|_{\mathbb{B}}^k]^{p/k}dt\bigg)^{1/p}<\infty.$$
When $k=p$, $\mL^p_{\mF}(0,T;L^k(\mF;\mathbb{B}))$ will be
abbreviated as $\mL^p_{\mF}(0,T;\mathbb{B})$. Denote by
$\mL^{\infty,p}_\mF(0,T;\mathbb{B})$ (resp.,
$\mL^{\infty,p}_{\mF,w}(0,T;\mathbb{B})$) the Banach space of all
adapted $\mathbb{B}$-valued strongly (resp., weakly) cadlag
processes $X$ for which
$$\|X\|_{\mL^{\infty,p}_\mF(0,T;\mathbb{B})}:=\bigg(E\sup_{0\leq t\leq T}\|X(t)\|_\mathbb{B}^p\bigg)^{1/p}<\infty.$$
$$\bigg(\textrm{resp.,}\ \|X\|_{\mL^{\infty,p}_{\mF,w}(0,T;\mathbb{B})}:=\bigg(E\sup_{0\leq t\leq T}\|X(t)\|_\mathbb{B}^p\bigg)^{1/p}<\infty.\bigg)$$
Denote by $\mL^{k,p}_\mF(0,T;\mathbb{B})$ (resp.,
$\mL^{k,p}_\mF([0,T]\times E;\mathbb{B})$) with $k\in[1,\infty)$ the
set of all $\mP$ measurable (resp., $\mP\otimes\mE$ measurable)
$\mathbb{B}$-valued processes $X$ such that
$$\|X\|_{\mL^{k,p}_\mF(0,T;\mathbb{B})}:=\bigg(E\bigg(\int_0^T\|X(t)\|^k_\mathbb{B} dt\bigg)^{p/k}\bigg)^{1/p}<\infty.$$
$$\bigg(\textrm{resp.,}\ \|X\|_{\mL^{k,p}_\mF([0,T]\times E;\mathbb{B})}:=\bigg(E\bigg(\int_0^T\int_E\|X(t,e)\|^k_\mathbb{B}
v(de)dt\bigg)^{p/k}\bigg)^{1/p}<\infty.\bigg)$$ Obviously when
$k=p$, $\mL^p_{\mF}(0,T;L^k(\mF;\mathbb{B}))$ coincides with
$\mL^{k,p}_\mF(0,T;\mathbb{B})$.

We consider the following SDE
\begin{equation}\label{b1}
  \left\{
  \begin{array}{ccccccccc}
  \begin{split}
  &dX_t=b(t,X_t)dt+\sigma(t,X_t)dW_t+\int_Eg(t,e,X_{t-})\wtil{N}(dedt),\\
  &X_s=x,\ t\in[s,T],
  \end{split}
  \end{array}
  \right.
\end{equation}
where $b:[0,T]\times\Omega\times\mR^n\rightarrow\mR^n$ and $\sigma:
[0,T]\times\Omega\times\mR^n\rightarrow\mR^{n\times d}$ are
$\mP\otimes\mathcal {B}(\mR^n)$ measurable, and
$g:[0,T]\times\Omega\times E\times\mR^n\rightarrow\mR^n$ is
$\mP\otimes\mathcal {E}\otimes\mathcal {B}(\mR^n)$ measurable.
The functions $b,\ \sigma$, and $g$ are called drift coefficient,
diffusion coefficient and jump coefficient, respectively. We denote
by $\{X_t^s(x),\ t\in[s,T]\}$ the solution of equation \eqref{b1}
starting from $x$ at time $s$ and simply denote
$X_\cdot(x):=X_\cdot^0(x)$.

We assume the following conditions\\
$\mathbf{(C1)}$ the coefficients $b$, $\sigma$, and $g$ are of
linear growth with respect to $x$, i.e., there exist positive
constant $K$ and deterministic function $K(e)$ such that
$$|b(t,x)|\leq K(1+|x|),\ |\sigma(t,x)|\leq K(1+|x|),\ |g(t,e,x)|\leq
K(e)(1+|x|)$$ and $$\int_EK(e)^pv(de)<\infty,\ \forall p\geq2;$$\\
$\mathbf{(C2)_{k}}$ the coefficients $b$, $\sigma$ and $g$ are
differentiable and their derivatives up to the order $k$ are
bounded, i.e., there exist positive constant $L$ and deterministic
function $L(e)$, such that for $1\leq|\gamma|\leq k$,
$$|\pat^\gamma b(t,x)|\leq L,\ |\pat^\gamma\sigma(t,x)|\leq L,\ |\pat^\gamma g(t,e,x)|\leq L(e)$$
and $$\int_EL(e)^pv(de)<\infty,\ \forall p\geq2;$$ $\mathbf{(C3)}$
the map $\phi_{t,e}:x\rightarrow x+g(t,e,x)$ is homeomorphic for any
$(t,\omega,e)\in[0,T]\times\Omega\times E$ and the inverse map
$\phi^{-1}_{t,e}$ is uniformly Lipschitz continuous and of uniformly
linear growth with respect to $x$;\\
$\mathbf{(C4)}$ the Jacobian matrix $I+\pat g(t,e,x)$ of the
homeomorphic map $\phi_{t,e}(x)$ is invertible for any $x$, for
almost
all $(t,\omega,e)\in[0,T]\times\Omega\times E$.\\
The following two lemmas are due to Kunita \cite{Ku1} or Fujiwara
and Kunita \cite{FuKu}.
\begin{lem}\label{lemb1}
  Assume the conditions $\mathbf{(C1)}$, $\mathbf{(C2)_1}$ and $\mathbf{(C3)}$ are satisfied. Then
  there exists a version of the unique solution of equation
  \eqref{b1}, denoted still by $\{X_t^s(x),\ (t,x)\in[s,T]\times\mR^n\}$, which satisfies\\
  (i) $t\rightarrow X_t^s(\cdot)$ is a $C(\mR^n)$-valued cadlag
  process;\\
  (ii) $X_t^s(\cdot):\mR^n\rightarrow\mR^n$ is homeomophic for any
  $t\in[s,T]$, a.s.;\\
  (iii) $X_r^s(x)=X_r^t(X_t^s(x))$, $0\leq s\leq t \leq r\leq T$.
\end{lem}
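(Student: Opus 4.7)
The plan is to follow Kunita's classical stochastic flow program, adapted to jump diffusions. For fixed $(s,x)$, existence and uniqueness of a cadlag strong solution to \eqref{b1} are standard under the Lipschitz bound implied by $\mathbf{(C2)_1}$ and the linear growth from $\mathbf{(C1)}$; the contribution of the compensated Poisson integral is controlled via Kunita's maximal inequality using $\int_E K(e)^p v(de)<\infty$ and $\int_E L(e)^p v(de)<\infty$ for all $p\ge 2$. The next step is to produce a version jointly regular in $(t,x)$. Applying BDG, Gronwall, and the above moment hypotheses, one obtains, for each $p\ge 2$,
\begin{equation*}
E\sup_{s\le t\le T}|X_t^s(x)|^p\le C_p(1+|x|)^p,\qquad E\sup_{s\le t\le T}|X_t^s(x)-X_t^s(y)|^p\le C_p|x-y|^p.
\end{equation*}
Kolmogorov's continuity criterion in the $x$-variable (with $p>n$), combined with the cadlag regularity in $t$ inherited from the Brownian and Poisson drivers, yields a modification taking values in $C(\mR^n)$ with cadlag trajectories, establishing (i).

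For the homeomorphism property (ii), the strategy is to construct a candidate inverse random field and verify that it genuinely inverts $X_t^s(\cdot)$. Condition $\mathbf{(C3)}$ guarantees that each pre-jump map $\phi_{t,e}$ is a homeomorphism of $\mR^n$ with sublinear Lipschitz inverse, so the ``time-reversed'' SDE whose solution is the expected candidate $\{Y_t^s(x)\}$ has coefficients in the same class as $b,\sigma,g$, and by the same analysis admits a $C(\mR^n)$-valued cadlag solution. Strong uniqueness identifies $Y_t^s\circ X_t^s$ and $X_t^s\circ Y_t^s$ with the identity almost surely for each fixed $x$. To upgrade this to a simultaneous statement and to ensure bijectivity, one derives a negative-moment bound
\begin{equation*}
E\sup_{s\le t\le T}|X_t^s(x)-X_t^s(y)|^{-p}\le C_p|x-y|^{-p},\qquad p>n,
\end{equation*}
together with a tail estimate forcing $|X_t^s(x)|\to\infty$ as $|x|\to\infty$; these yield injectivity and properness on a single full-measure event, so $X_t^s(\cdot)$ is a homeomorphism of $\mR^n$ for every $t\in[s,T]$ almost surely.

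The flow property (iii) follows from strong uniqueness of \eqref{b1}: for fixed $s\le t\le r$ and fixed $x$, both $u\mapsto X_u^s(x)$ and $u\mapsto X_u^t(X_t^s(x))$ solve the SDE on $[t,r]$ with the common $\mF_t$-measurable initial datum $X_t^s(x)$, hence agree almost surely, and continuity in $x$ of both sides extends the identity to a single null set uniform in $x$. The main obstacle is the homeomorphism step: ensuring that injectivity survives the accumulation of countably many jumps requires precisely $\mathbf{(C3)}$, and promoting the identity $Y_t^s\circ X_t^s=\mathrm{id}$ from ``for each $x$ a.s.'' to ``for all $x$ simultaneously a.s.'' depends on the negative-moment estimate, whose derivation is the most delicate input and is the reason the hypothesis $\int_E K(e)^p v(de),\,\int_E L(e)^p v(de)<\infty$ is imposed for every $p\ge 2$ rather than for a single exponent.
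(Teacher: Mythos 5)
The paper offers no proof of this lemma at all; it simply attributes the result to Kunita \cite{Ku1} and Fujiwara--Kunita \cite{FuKu}. Your outline reproduces the standard argument of those references (positive and negative moment estimates, Kolmogorov's criterion in the spatial variable, a time-reversed equation for the candidate inverse, and strong uniqueness for the flow property), so it is correct in approach and matches the route the paper implicitly relies on.
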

\begin{lem}\label{lemb2}
  If conditions $\mathbf{(C1)}$, $\mathbf{(C2)_{k+1}}$, $\mathbf{(C3)}$ and $\mathbf{(C4)}$ are satisfied, the
  unique solution of the equation \eqref{b1} defines a stochastic
  flow of $C^k$-diffeomorphism.
\end{lem}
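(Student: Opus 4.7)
The plan is to upgrade the statement of Lemma~\ref{lemb1}, which already gives continuity and the homeomorphism property, to $C^k$-smoothness in the spatial variable. The natural strategy is to (i) guess the SDE satisfied by the formal spatial derivatives $\pat^\gamma X_t^s(x)$ for $1\leq|\gamma|\leq k$, (ii) solve those equations and verify via Kolmogorov's continuity criterion (in the cadlag form) that their solutions are indeed the derivatives of $X_t^s(x)$, and (iii) combine the invertibility of the Jacobian with the homeomorphism property to conclude $C^k$-diffeomorphism.

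For the first-order case, I would differentiate \eqref{b1} formally with respect to $x^j$ and introduce the linear jump SDE
\begin{equation*}
  dY_t = \pat b(t,X_t)Y_t\,dt + \pat\sigma(t,X_t)Y_t\,dW_t + \int_E \pat g(t,e,X_{t-})Y_{t-}\wtil{N}(de\,dt),\quad Y_s = I.
\end{equation*}
By $\mathbf{(C2)_{k+1}}$ the coefficients $\pat b,\pat\sigma,\pat g$ are bounded (with the $L^p(v)$ bound on $L(e)$), so this linear SDE has a unique strong solution with finite moments of every order. Using the Burkholder--Davis--Gundy inequality for cadlag martingales together with Gronwall's lemma, I would derive the estimate
\begin{equation*}
  E\sup_{s\leq t\leq T}\bigl|\tfrac{X_t^s(x+h)-X_t^s(x)}{|h|}-Y_t(x)\cdot\tfrac{h}{|h|}\bigr|^p \longrightarrow 0 \qquad (h\to 0),
\end{equation*}
and a similar moment bound of order $p$ on second-order increments. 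Kolmogorov's criterion for cadlag fields then promotes $Y$ to a version continuous in $x$ that genuinely equals $\pat X_t^s(x)$. Invertibility of $Y_t(x)$ is obtained by writing down the companion linear SDE for a candidate inverse; between jumps this is the standard continuous-case construction, and at a jump time with mark $e$ one has $Y_t = \bigl(I+\pat g(t,e,X_{t-})\bigr)Y_{t-}$, which is invertible thanks to condition $\mathbf{(C4)}$. Estimating $E\sup_t|Y_t^{-1}(x)|^p$ in the same fashion keeps all relevant moments finite.

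For $k\geq 2$ I would iterate: the formal derivative $\pat^\gamma X_t^s(x)$ satisfies a linear jump SDE whose coefficients are polynomial expressions in $\pat^{\gamma'} X_t^s$ for $|\gamma'|<|\gamma|$ and in $\pat^{\gamma''}b,\pat^{\gamma''}\sigma,\pat^{\gamma''}g$ for $|\gamma''|\leq|\gamma|$, so the inductive hypothesis together with $\mathbf{(C2)_{k+1}}$ (one extra derivative is needed for the Kolmogorov step at the top level) again yields finite moments and a continuous-in-$x$ version. Combining the $C^k$-smoothness of $x\mapsto X_t^s(x)$, the invertibility of its Jacobian, and the fact from Lemma~\ref{lemb1} that $X_t^s$ is a global homeomorphism, the inverse function theorem gives that $X_t^s$ is a $C^k$-diffeomorphism for each $t\in[s,T]$ a.s., and the flow property (iii) of Lemma~\ref{lemb1} extends by continuity to the derivatives.

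The main obstacle is the jump part. The continuous SDE argument controls increments via Doob's inequality, but in the jump case one must simultaneously track how the random multiplicative factor $I+\pat g(t,e,X_{t-})$ affects the derivative process and how the non-square-integrable data $L(e)$ enters into the BDG-type bounds. Condition $\mathbf{(C4)}$ combined with the $L^p(v)$-integrability in $\mathbf{(C1)}$--$\mathbf{(C2)_{k+1}}$ is exactly what is needed to push the classical Kunita/Fujiwara--Kunita argument through; the bookkeeping of these moment estimates for the inverse Jacobian and for higher-order derivatives is the only genuinely delicate point.
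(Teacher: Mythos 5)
The paper offers no proof of this lemma at all---it is stated as ``due to Kunita \cite{Ku1} or Fujiwara and Kunita \cite{FuKu}''---and your outline is precisely the standard argument from those references: linear jump SDEs for the formal derivatives, $L^p$ moment estimates plus a cadlag Kolmogorov criterion, induction on the order of differentiation, and the inverse function theorem combined with the global homeomorphism property of Lemma \ref{lemb1}. One small precision worth recording: pointwise invertibility from $\mathbf{(C4)}$ alone would not give the moment bounds you claim for $Y_t^{-1}$; it is the uniform Lipschitz continuity of $\phi_{t,e}^{-1}$ in $\mathbf{(C3)}$ that supplies the uniform bound on $\bigl(I+\pat g(t,e,x)\bigr)^{-1}$ needed to close the estimate $E\sup_t|Y_t^{-1}(x)|^p<\infty$.
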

\section{SIPDE for the inverse flow $X_\cdot^{-1}(x)$}
In this section, under some suitable hypotheses on the coefficients
$b$, $\sigma$ and $g$, we will prove that the $i$-th coordinate of
the inverse flow $X^{-1}_\cdot(x)$ of the solution $X_\cdot(x)$ of
the SDE \eqref{b1} satisfies a stochastic integral partial
differential equation (SIPDE) of the following form
  \begin{equation}
  \left\{
  \begin{split}\label{cc1}
    du(t,x)=&(\mM^r\mM^r-\mL)(t,x)u(t-,x)dt+\int_E\mA(t,e)u(t-,x)v(de)dt\\
    &-\mM^r(t,x)u(t-,x)dW^r_t+\int_E\mA(t,e)u(t-,x)\widetilde{N}(dedt),\
    0\leq t\leq T,\\
    u(0,x)=&x^i
  \end{split}
  \right.
\end{equation}
where the operators $\mL(t,x)$ and $\mM^r(t,x)$ are defined in
\eqref{oper} and
\begin{eqnarray}\label{oper1}
 \mA(t,e)f(x):=-f(x)+f(\phi^{-1}_{t,e}(x)).
\end{eqnarray}
As we will see in Section 5, the above equation plays a crucial role
in the construction of the solution of BSIPDE \eqref{a1}. In fact,
when SDE \eqref{b1} is driven only by a Brownian motion, Krylov and
Rozovskii have proved that the inverse flow $X_\cdot^{-1}(x)$
satisfies equation \eqref{cc1} with $g=0$ (see \cite[Theorem 3.1,
page 89]{KrRo1}). We generalize their results to the case of jump
diffusions.
\subsection{An It\^{o}-Wentzell formula for jump diffusions}
    An It\^{o}-Wentzell formula for forward processes driven by
  a Poisson point process was established by {\O}ksendal and Zhang
  in \cite{OkZh} where the integrands in \eqref{eqc1} are required to be square integrable with respect to $x$ on the whole space
  $\mR^n$, which prevents us from directly applying it to the solution
  $(Y(x),Z(x),U(x))$ of BSDE \eqref{d10}. Recently Krylov \cite{Kr} considered the case of Brownian motion-driven semimartingales and proved an It\^{o}-Wentzell formula for
  distribution-valued processes so that generalized some existing ones (see, for instance, Theorem 3.3.1 of \cite{Ku2} and Theorem 1.4.9 of \cite{Ro}). Our method is essentially same as that of \cite{Kr} where the assumptions, except imposed on the jump coefficients $g$ and $J$, are weaker than
  those in Lemma \ref{lemcc1}. However Lemma \ref{lemcc1} is enough for our subsequent use.

Let $X$ be an $\mathbb{R}^n$-valued stochastic process given by
$$X_t=X_0+\int^t_0b(s)ds+\int^t_0\sigma(s)dW_s+\int_0^t\int_Eg(s,e)\tilde{N}(deds).$$
Here $b(\cdot)$ is predictable $\mR^n$-valued process,
$\sigma(\cdot)$ is predictable $\mR^{n\times d}$-valued process and
$g(\cdot,\cdot)$ is $\mP\otimes\mE$ measurable $\mR^n$-valued
process such that almost surely
\begin{equation}\label{c1}
 \begin{split}
  \int_0^T\big[|b(t)|+\textrm{tr}a(t)\big]dt<\infty,\\
  \sup_{(t,e)\in[0,T]\times E}|g(t,e)|<\infty
  \end{split}
\end{equation}where $2a(t):=\sigma(t)\sigma'(t)$ and $\textrm{tr}a(t):=\sum_{i=1}^{n}|a^{ii}(t)|$.

Let $\{F(t,x),\ (t,x)\in[0,T]\times\mathbb{R}^n\}$ be a family of
$\mathbb{R}$-valued semimartingales of the form
\begin{equation}\label{eqc1}
  F(t,x)=F(0,x)+\int_0^tG(s,x)ds+\int_0^tH(s,x)dW_s+\int_0^t\int_EJ(s,e,x)\widetilde{N}(deds)
\end{equation}
where the $\mR$-valued function $G(\cdot,\cdot)$ and $\mR^d$-valued
function $H(\cdot,\cdot)$ are $\mP\otimes\mB(\mR^n)$ measurable and
$\mR$-valued function $J(\cdot,\cdot,\cdot)$ is
$\mP\otimes\mE\otimes\mB(\mR^n)$
measurable. Assume that\\
$\mathbf{(A1)}$ For any $(\omega,t,e)\in\Omega\times[0,T]\times E$,

(a) the function $F(t,x)$ is twice continuously differentiable in
$x$,

(b) the function $G(t,x)$ is continuous in $x$,

(c) the function $H(t,x)$ is continuously differentiable in $x$,

(d) the function $J(t,e,x)$ is continuous in $x$;\\
$\mathbf{(A2)}$ For any compact subset $K\subset\mR^n$ we have
almost surely
\begin{eqnarray*}
  &&\int_0^T\sup_{x\in K}\big[|F(t,x)|\big(|b(t)|+\textrm{tr}a(t)\big)+|F(t,x)|^2\textrm{tr}a(t)\big]dt<\infty,\label{cc3}\\
  &&\int_0^T\sup_{x\in K}\big[|F(t,x)|^2+|\pat F(t,x)|+|L(t)F(t,x)|+|M(t)F(t,x)|^2\big]<\infty,\label{c2}\\
  &&\int_0^T\sup_{x\in K}\big[|G(t,x)|+|H(t,x)|^2+|M^k(t)H^k(t,x)|
  \big]dt<\infty,\label{c3}\\
  &&\int_0^T\int_E\sup_{x\in K}|J(t,e,x)|^2<\infty,\label{c4}
\end{eqnarray*}
where the differential operators
\begin{equation*}
  \begin{split}
    L(t)&:=a^{ij}(t)\pat^2_{ij}+b^i(t)\pat_i,\\
    M^k(t)&:=\sigma^{ik}(t)\pat_i,\ k=1,\cdots,d,\\
    M(t)&:=(M^1(t),\cdots,M^d(t))'.
  \end{split}
\end{equation*}
\begin{lem}\label{lemcc1}
Suppose that the conditions $\mathbf{(A1)}$ and $\mathbf{(A2)}$ are
satisfied. Then we have for each $t\in[0,T]$ almost surely
\begin{equation}
 \begin{split}\label{c5}
  &F(t,X(t))\\
  =~&F(0,x)+\int_0^tG(s,X_{s-})ds+\int_0^tH(s,X_{s-})dW_s+\int_0^t\langle\pat
  F(s-,X_{s-}),b(s)\rangle ds\\
  &+\int_0^t\langle\pat
  F(s-,X_{s-}),\sigma(s)dW_s\rangle
  +\frac{1}{2}\int_0^t\ll\pat^2F(s-,X_{s-}),\sigma\sigma^*(s)\gg
  ds\\
  &+\int_0^t\ll\pat
  H(s,X_{s-}),\sigma(s)\gg ds+\int_0^t\int_E\big[J(s,e,X_{s-}+g(s,e))-J(s,e,X_{s-})\big]v(de)ds\\
  &+\int_0^t\int_E\big[F(s-,X_{s-}+g(s,e))-F(s-,X_{s-})-\langle\pat
  F(s-,X_{s-}),g(s,e)\rangle\big]v(de)ds\\
  &+\int_0^t\int_E\big[F(s-,X_{s-}+g(s,e))-F(s-,X_{s-})+J(s,e,X_{s-}+g(s,e))\big]\widetilde{N}(deds)
 \end{split}
\end{equation}
where $\ll A,B\gg:=\textrm{tr}(AB')$ for $n\times m$ matrices $A$
and $B$.
\end{lem}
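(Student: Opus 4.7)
\medskip
\noindent\emph{Proof proposal.} The plan is to reduce to a smooth version of $F$ by spatial mollification, to establish the formula in that smooth setting by a Riemann-sum discretization of $[0,t]$ combined with the standard jump It\^o formula, and to pass to the limit using the integrability supplied by $\mathbf{(A2)}$. A preliminary localization $\tau_N:=\inf\{s\le T:|X_s|>N\}$ reduces to the case that $X$ takes values in a compact ball, so the $\sup_{x\in K}$-integrability in $\mathbf{(A2)}$ furnishes dominating functions throughout. For the regularization I would set $F_\eps(t,x):=\int_{\mR^n}F(t,y)\zeta_\eps(x-y)\,dy$ for a smooth compactly supported mollifier $\zeta_\eps$, and define $G_\eps,H_\eps,J_\eps$ analogously; a stochastic Fubini argument shows that $F_\eps$ still satisfies \eqref{eqc1} with coefficients $G_\eps,H_\eps,J_\eps$, and now $F_\eps$ is $C^\infty$ in $x$ with all derivatives inheriting the integrability bounds of $\mathbf{(A2)}$. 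Once \eqref{c5} is proved for $F_\eps$, $\mathbf{(A1)}$ and $\mathbf{(A2)}$ furnish the continuity and domination needed to send $\eps\downarrow 0$ and $N\uparrow\infty$.

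For the smooth case, fix a dyadic partition $\pi_n=\{t_k^n:=kt/2^n\}$ and decompose
\begin{equation*}
\begin{split}
  F_\eps(t,X_t)-F_\eps(0,X_0)=&\sum_k\bigl[F_\eps(t_{k+1}^n,X_{t_{k+1}^n})-F_\eps(t_k^n,X_{t_{k+1}^n})\bigr]\\
  &+\sum_k\bigl[F_\eps(t_k^n,X_{t_{k+1}^n})-F_\eps(t_k^n,X_{t_k^n})\bigr]=:S_n^{\mathrm{I}}+S_n^{\mathrm{II}}.
\end{split}
\end{equation*}
The sum $S_n^{\mathrm{II}}$ is treated by applying the classical jump It\^o formula to the $C^2$ function $y\mapsto F_\eps(t_k^n,y)$ on each subinterval; after summing and letting $|\pi_n|\downarrow 0$, the continuity in $t$ of $\pat F_\eps$ and $\pat^2F_\eps$ converts $F_\eps(t_k^n,\cdot)$ into $F_\eps(s-,\cdot)$ inside the integrands, and produces the $L(s)F_\eps$ and $M(s)F_\eps$ terms, the compensator $\int\!\int[F_\eps(s-,X_{s-}+g)-F_\eps(s-,X_{s-})-\la\pat F_\eps,g\ra]\,v(de)\,ds$, and the $\widetilde N$-integrand $F_\eps(s-,X_{s-}+g)-F_\eps(s-,X_{s-})$ appearing in \eqref{c5}. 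For $S_n^{\mathrm{I}}$ I apply \eqref{eqc1} with the fixed spatial argument $x=X_{t_{k+1}^n}$, then add and subtract $X_{s-}$ inside each integrand and Taylor-expand $H_\eps$ and $J_\eps$ in the spatial variable; inserting the SDE for $X$ in $X_{t_{k+1}^n}-X_{s-}$ and invoking the It\^o isometry for the Brownian part generates the cross term $\int_0^t\ll\pat H_\eps(s,X_{s-}),\sigma(s)\gg\,ds$, while the jump contribution uses that on a jump time $\tau\in(t_k^n,t_{k+1}^n]$ the argument $X_{t_{k+1}^n}$ already contains the post-jump value $X_{\tau-}+g(\tau,\mathbbm{p}(\tau))$. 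Compensating produces the drift $\int_0^t\!\int_E[J_\eps(s,e,X_{s-}+g)-J_\eps(s,e,X_{s-})]\,v(de)\,ds$ and promotes the $\widetilde N$-integrand to $J_\eps(s,e,X_{s-}+g(s,e))$, which merges with the $\widetilde N$-term from $S_n^{\mathrm{II}}$ to form the full integrand $F_\eps(s-,X_{s-}+g)-F_\eps(s-,X_{s-})+J_\eps(s,e,X_{s-}+g)$ of \eqref{c5}.

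The main technical obstacle is the rigorous handling of the Poisson cross term in $S_n^{\mathrm{I}}$: the integrand $J_\eps(s,e,X_{t_{k+1}^n})$ is not $\mF_s$-predictable, so the $\widetilde N$-integral is only formal until the argument $X_{t_{k+1}^n}$ is replaced, jump time by jump time, by the correct predictable version (namely $X_{s-}+g(s,e)$ on the jump times of $\mathbbm{p}$ and $X_{s-}$ off them). I would justify this limit through a BDG-type estimate for $\widetilde N$-martingales combined with continuity of $J_\eps$ in $x$ from $\mathbf{(A1)}(d)$ and the $\sup_{x\in K}$-bound of $\mathbf{(A2)}$, plus an $L^1$ control of the compensator difference; the analogous but simpler Brownian cross term is standard. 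Once this limit is identified and $S_n^{\mathrm{I}}+S_n^{\mathrm{II}}$ is combined, the formula \eqref{c5} holds for $F_\eps$; sending $\eps\downarrow 0$ using $\mathbf{(A1)}$--$\mathbf{(A2)}$ and removing the localization $\tau_N$ completes the proof.
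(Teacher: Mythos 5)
Your proposal is a correct strategy in outline, but it is genuinely different from the paper's. You mollify $F$ in the space variable and then run the classical Kunita-style time-discretization: split the increment of $F_\eps(t,X_t)$ into a spatial part $S_n^{\mathrm{II}}$ (ordinary jump It\^{o} formula at frozen time) and a temporal part $S_n^{\mathrm{I}}$ (the semimartingale decomposition \eqref{eqc1} at the frozen, anticipating point $X_{t_{k+1}^n}$), and extract the cross terms $\int_0^t\ll\pat H,\sigma\gg ds$ and $\int_0^t\int_E[J(s,e,X_{s-}+g)-J(s,e,X_{s-})]v(de)ds$ from the interaction. The paper instead follows Krylov's device: for each \emph{fixed deterministic} $x$ it applies the ordinary It\^{o} product rule to the two real semimartingales $t\mapsto F(t,x)$ and $t\mapsto\phi_\eps(X_t-x)$, where $\phi_\eps$ is a mollified delta centred at $X_t$; it then integrates in $x$, interchanges integrals via the (stochastic) Fubini theorem using the estimates \eqref{cc7}--\eqref{cc10}, moves all derivatives off $\phi_\eps$ onto $F$ and $H$ by integration by parts, and lets $\eps\to0$. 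The decisive advantage of the paper's route is that every integrand stays predictable throughout --- the quadratic-covariation and jump cross terms are produced automatically by the finite-dimensional product formula --- so the anticipation problem that you correctly single out as the ``main technical obstacle'' of $S_n^{\mathrm{I}}$ never arises, and no time grid or fourth-moment remainder estimates are needed; the price is the Fubini-type justification, which under $\mathbf{(A2)}$ is routine. Your route buys a more transparent term-by-term derivation (one sees exactly which piece of \eqref{c5} comes from which interaction), and the finiteness of $v(E)$ does make the Poisson cross term tractable jump time by jump time; but be aware that the step you sketch --- replacing $X_{t_{k+1}^n}$ by its predictable version inside the $dW$- and $\widetilde N$-integrals and controlling the error --- is where essentially all the work of this method lives, and it requires localization plus moment estimates on the random fields $\int_{t_k}^{t_{k+1}}H_\eps(s,\cdot)dW_s$ and their gradients that go beyond the purely qualitative statements in your write-up. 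Also, a small correction: $F(\cdot,x)$ is only cadlag in $t$, so the passage from $F_\eps(t_k^n,\cdot)$ to $F_\eps(s-,\cdot)$ uses left limits along the refining partition, not continuity in $t$.
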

\begin{proof}
 Taking nonnegative $\phi\in C_c^\infty(\mathbb{R}^n,\mathbb{R})$
  with support in the unit ball and
  $\int_{\mR^n}\phi(x)dx=1$, define for $\eps>0$,
  $\phi_\eps(x):=\eps^{-n}\phi(x/\eps)$. Then for any
  $x\in\mR^n$, It\^{o}'s formula yields
  \begin{equation*}
    \begin{split}
    &\phi_\eps(X_t-x)\\
    =~&\phi_\eps(X_0-x)+\int_0^t\langle\pat\phi_\eps(X_{s-}-x),b(s)\rangle
    ds+\int_0^t\la\pat\phi_\eps(X_{s-}-x),\sigma(s)dW_s\ra\\
    &+\frac{1}{2}\int_0^t\ll\pat^2\phi_\eps(X_{s-}-x),\sigma\sigma^*(s)\gg ds\\
    &+\int_0^t\int_E\big[\phi_\eps(X_{s-}-x+g(s,e))-\phi_\eps(X_{s-}-x)\big]\widetilde{N}(deds)\\
    &+\int_0^t\int_E\big[\phi_\eps(X_{s-}-x+g(s,e))-\phi_\eps(X_{s-}-x)-\la\pat\phi_\eps(X_{s-}-x),g(s,e)\ra\big]v(de)ds.
    \end{split}
  \end{equation*}
  Again using It\^{o}'s formula to the product $F(t,x)\phi_\eps(X_t-x)$, we
  obtain almost surely  for all $t\in[0,T]$
  \begin{equation}
    \begin{split}\label{c6}
      &F(t,x)\phi_\eps(X_t-x)\\
      =~&F(0,x)\phi_\eps(X_0-x)+\int_0^t\phi_\eps(X_{s-}-x)G(s,x)ds+\int_0^t\phi_\eps(X_{s-}-x)H(s,x)dW_s\\
      &+\frac{1}{2}\int_0^tF(s-,x)\ll\pat^2\phi_\eps(X_{s-}-x),\sigma\sigma^*(s)\gg
      ds+\int_0^tH(s,x)\sigma^*\pat\phi_\eps(X_{s-}-x)
      ds\\
      &+\int_0^t\int_E\big[\phi_\eps(X_{s-}-x+g(s,e))-\phi_\eps(X_{s-}-x)\big]J(s,e,x)v(de)ds\\
      &+\int_0^t\int_EF(s-,x)\big[\phi_\eps(X_{s-}-x+g(s,e))-\phi_\eps(X_{s-}-x)-\la\pat\phi_\eps(X_{s-}-x),g(s,e)\ra\big]v(de)ds\\
      &+\int_0^tF(s-,x)\la\pat\phi_\eps(X_{s-}-x),\sigma(s)dW_s\ra+\int_0^tF(s-,x)\la\pat\phi_\eps(X_{s-}-x),b(s)\ra
      ds\\
      &+\int_0^t\int_E\big[\phi_\eps(X_{s-}-x+g(s,e))F(s-,x)+\phi_\eps(X_{s-}-x+g(s,e))J(s,e,x)\\
      &-\phi_\eps(X_{s-}-x)F(s-,x)\big]\widetilde{N}(deds).
     \end{split}
  \end{equation}
It is well known that condition \eqref{c1} implies that
$$\sup_{0\leq t\leq T}|X_t|<\infty,\ \ a.s..$$
In view of assumption $\mathbf{(A2)}$, we see that all terms in
\eqref{c6} are almost surely finite. For $r\in\mathbb{N}$, set
$B_r:=\{x\in\mR^n:|x|<r\}$. From \eqref{c1} and $\mathbf{(A2)}$, we
have
\begin{equation}
  \begin{split}\label{cc7}
   &\int_0^T\int_{B_r}|\phi_\eps(X_{s-}-x)H(s,x)|^2+|F(s-,x)\sigma^*(s)\pat\phi_\eps(X_{s-}-x)|^2dxds\\
   \leq&\int_0^T\int_{\mR^n}|\phi_\eps(X_{s-}-x)H(s,x)|^2+|F(s-,x)|^2\textrm{tr}a(s)|\pat\phi_\eps(X_{s-}-x)|^2dxds\\
   \leq&\bigg(\int_0^T\sup_{x\in K(\omega)}|H(s,x)|^2ds\bigg)\int_{\mR^n}|\phi_\eps(x)|^2dx\\
   &+\bigg(\int_0^T\sup_{x\in K(\omega)}|F(s-,x)|^2\textrm{tr}a(s)ds\bigg)\int_{\mR^n}|\pat\phi_\eps(x)|^2dx\\
   <&\infty,\ \ a.s.,
  \end{split}
  \end{equation}
  \begin{equation}
  \begin{split}\label{cc8}
   &\int_0^t\int_E\int_{B_r}\big|\big[\phi_\eps(X_{s-}-x+g(s,e))-\phi_\eps(X_{s-}-x)\big]F(s-,x)\big|^2dxv(de)ds\\
   \leq&\int_0^t\int_E\int_{\mR^n}\big|\big[\phi_\eps(X_{s-}-x+g(s,e))-\phi_\eps(X_{s-}-x)\big]F(s-,x)\big|^2dxv(de)ds\\
   \leq&\bigg(Cv(E)\int_0^T\Big(\sup_{x\in K'(\omega)}|F(s-,x)|^2+\sup_{x\in K(\omega)}|F(s-,x)|^2\Big)ds\bigg)\int_{\mR^n}|\phi_\eps(x)|^2dx\\
   <&\infty,\ \ a.s.,
  \end{split}
\end{equation}
and
\begin{equation}
  \begin{split}\label{cc10}
    &\int_0^t\int_E\int_{B_r}|\phi_\eps(X_{s-}-x+g(s,e))J(s,e,x)|^2dxv(de)ds\\
   \leq&\int_0^t\int_E\int_{\mR^n}|\phi_\eps(X_{s-}-x+g(s,e))J(s,e,x)|^2dxv(de)ds\\
   \leq&\bigg(C\int_0^T\int_E\sup_{x\in K'(\omega)}|J(s,e,x)|^2v(de)ds\bigg)\int_{\mR^n}|\phi_\eps(x)|^2dx\\
   <&\infty,\ \ a.s..
  \end{split}
\end{equation} Here $K(\omega)$ and $K'(\omega)$ are two compact subsets of
$\mR^n$ depending on $\omega$. Integrating with respect to $x$ over
the ball $B_r$ on both sides of \eqref{c6}, using Fubini's Theorem
to interchange $dx$ and $ds$ and the stochastic Fubini's theorem
(see \cite[Theorem 65, pages 208-209]{Pro}) to interchange $dx$ and
$dW_s$, and $dx$ and $d\tilde{N}(deds)$, and then letting
$r\rightarrow\infty$, we obtain
\begin{align}
    \begin{split}\label{c7}
      &\int_{\mR^n}F(t,x)\phi_\eps(X_t-x)dx\\
      =&\int_{\mR^n}F(0,x)\phi_\eps(X_0-x)dx+\int_0^t\int_{\mR^n}\phi_\eps(X_{s-}-x)G(s,x)dxds+\int_0^t\int_{\mR^n}\phi_\eps(X_{s-}-x)H(s,x)dxdW_s\\
      &+\frac{1}{2}\int_0^t\int_{\mR^n}F(s-,x)\ll\pat^2\phi_\eps(X_{s-}-x),\sigma\sigma^*(s)\gg
      dxds+\int_0^t\int_{\mR^n}H(s,x)\sigma^*(s)\pat\phi_\eps(X_{s-}-x)dx
      ds\\
      &+\int_0^t\int_E\int_{\mR^n}F(s-,x)[\phi_\eps(X_{s-}-x+g(s,e))-\phi_\eps(X_{s-}-x)-\la\pat\phi_\eps(X_{s-}-x),g(s,e)\ra]dxv(de)ds\\
      &+\int_0^t\Big\la\int_{\mR^n} F(s-,x)\pat\phi_\eps(X_{s-}-x)dx,\sigma(s)dW_s\Big\ra+\int_0^t\Big\la\int_{\mR^n}F(s-,x)\pat\phi_\eps(X_{s-}-x)dx,b(s)\Big\ra
      ds\\
      &+\int_0^t\int_E\int_{\mR^n}\Big\{\big[\phi_\eps(X_{s-}-x+g(s,e))-\phi_\eps(X_{s-}-x)\big]F(s-,x)\\
      &\ \ \ \ \ \ \ \ \ \ \ \ \ \ \ \ \ +\phi_\eps(X_{s-}-x+g(s,e))J(s,e,x)\Big\}dx\tilde{N}(deds)\\
      &+\int_0^t\int_E\int_{\mR^n}\big[\phi_\eps(X_{s-}-x+g(s,e))-\phi_\eps(X_{s-}-x)\big]J(s,e,x)dxv(de)ds.
    \end{split}
\end{align}
Indeed, noting the inequality \eqref{cc7}, we see from the dominated
convergence theorem that as $r\rightarrow\infty$
$$\int_0^T\bigg|\int_{B_r}\phi_\eps(X_{s-}-x)H(s,x)dx-\int_{\mR^n}\phi_\eps(X_{s-}-x)H(s,x)dx\bigg|^2ds\rightarrow0,\ \textrm{in probability}$$
which implies
$$\int_0^T\int_{B_r}\phi_\eps(X_{s-}-x)H(s,x)dxdW_s\rightarrow\int_0^T\int_{\mR^n}\phi_\eps(X_{s-}-x)H(s,x)dxdW_s,\
\textrm{in probability}.$$ The convergence of other terms can be
proved in a similar manner.

Using integration by parts formula, we have
\begin{eqnarray*}
\begin{split}
\int_{\mR^n}F(s-,x)\ll\pat^2\phi_\eps(X_{s-}-x),\sigma\sigma^*(s)\gg
dx&=\int_{\mR^n}\phi_\eps(X_{s-}-x)\ll\pat^2F(s-,x),\sigma\sigma^*(s)\gg dx,\\
\int_{\mR^n}H(s,x)\sigma^*(s)\pat\phi_\eps(X_{s-}-x)dx&=\int_{\mR^n}\phi_\eps(X_{s-}-x)\ll\pat
H(s,x),\sigma(s)\gg dx,\\
\int_{\mR^n}
F(s-,x)\pat\phi_\eps(X_{s-}-x)dx&=\int_{\mR^n}\phi_\eps(X_{s-}-x)\pat
F(s-,x)dx.
\end{split}
\end{eqnarray*}
Finally, letting $\eps\rightarrow0$ in \eqref{c7}, we can deduce
\eqref{c5} using arguments analogous to the above.
\end{proof}

\subsection{An abstract result}
In this and the next subsections, following Krylov and Rozovskii
\cite{KrRo2}, \cite{KrRo3} and \cite{KrRo1}, we focus on the
derivation of SIPDE \eqref{cc1} for the inverse flow
$X_\cdot^{-1}(x)$. To this end, we first establish an abstract
result.

Let $V$ and $H$ be two separable Hilbert spaces and $V$ is
continuously embedded into $H$ such that $V$ is dense in $H$. The
space $H$ is identified with its dual space $H^*$, consequently
$$V\subset H\cong H^*\subset V^*,$$
where $V^*$ is the dual space of $V$. We denote by the $\|\cdot\|_H$
and $\|\cdot\|_V$ the norms in $H$ and $V$, respectively. Denote by
$(\cdot,\cdot)$ the inner product in $H$ and $\la\cdot,\cdot\ra$ the
duality product between $V$ and $V^*$.

We consider the following abstract form of equation \eqref{cc1}
\begin{equation}
  \left\{
  \begin{split}\label{c8}
    du(t)&=A(t)u(t)dt+\int_E\widetilde{A}(t,e)u(t)v(de)dt+B(t)u(t)dW_t+\int_E\widetilde{A}(t,e)u(t-)\widetilde{N}(dedt),\\
    u_0&\in H
  \end{split}
  \right.
\end{equation}
where the three processes
\begin{equation*}
 \begin{split}
 A(\cdot)&\in\mL^\infty_{\mF}(0,T;\mathscr{L}(V,V^*)),\\
B(\cdot)&\in\mL^\infty_{\mF}(0,T;\mathscr{L}(V,H^d)),\\
\wtil{A}(\cdot,\cdot)&\in\mL^\infty_{\mF}(0,T;\mathscr{L}(H,L^2(\mE,H)))
\end{split}
\end{equation*}satisfy the coercive condition\\
\begin{equation}\label{c9}
\begin{split}
 -2\la
  A(t)u,u\ra+\lambda\|u\|_H^2\geq\alpha\|u\|_V^2+\|B(t)u\|_H^2+\int_E\|\widetilde
  A(t,e)u\|_H^2v(de),\ \forall u\in V,
  \end{split}
\end{equation}
for some $\alpha>0$ and $\lambda\in\mR$.

Existence and uniqueness of solutions to SPDEs driven by a Poisson
random measure or a stable noise are studied by many authors, see
e.g. \cite{AlWuZh}, \cite{Hau1}, \cite{Hau2}, \cite{Myt},
\cite{RoZh}, \cite{Wal}, and references therein. Usually the
operator $A$ is assumed to be the infinitesimal generator of a
strongly continuous semigroup and mild solutions in $H$, rather than
weak solutions (in the PDE sense), are considered. In our setting,
both $A$ and $B$ are random operators and $B$ is a first-order
differential operator, which is a little more complicated than the
case in \cite{RoZh} where $B$ is only Lipschitz continuous from $H$
to $H$ in the diffusion term. We have the following theorem
\begin{thm}\label{thmc1}
  Equation \eqref{c8} has a unique solution $u\in \mL^2_{\mF}(0,T;V)\cap
  \mL^{\infty,2}_{\mF}(0,T;H)$. Moreover,
  \begin{equation}
  \begin{split}\label{cc9}
    \|u(t)\|_H^2=&\|u_0\|_H^2+2\int_0^t\la A(s)u(s),u(s)\ra
    ds+2\int_0^t\int_E(\wtil{A}(s,e)u(s),u(s))v(de)ds\\
    &+2\int_0^t(B(s)u(s),u(s))dW_s+\int_0^t\|B(s)u(s)\|_H^2ds\\
    &+\int_0^t\int_E\|\wtil{A}(s,e)u(s-)\|^2_H+2(u(s-),\wtil{A}(s,e)u(s-))\wtil{N}(deds)\\
    &+\int_0^t\int_E\|\wtil{A}(s,e)u(s-)\|^2_Hv(de)ds.
  \end{split}
\end{equation}
\end{thm}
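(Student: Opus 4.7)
The plan is to follow the variational Galerkin framework of Pardoux and Krylov--Rozovskii, extended to accommodate the compensated Poisson noise in the spirit of R\"ockner--Zhang. First I would pick an orthonormal basis $\{e_i\}_{i\geq 1}$ of $H$ lying in $V$, set $H_n:=\mathrm{span}\{e_1,\dots,e_n\}$ with orthogonal projection $\Pi_n$, and consider the finite--dimensional SDE on $H_n$
\begin{equation*}
du^n(t)=\Pi_nA(t)u^n(t)\,dt+\int_E\Pi_n\wtil A(t,e)u^n(t)\,v(de)\,dt+\Pi_nB(t)u^n(t)\,dW_t+\int_E\Pi_n\wtil A(t,e)u^n(t-)\,\wtil N(de\,dt),
\end{equation*}
with $u^n(0)=\Pi_n u_0$. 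Since this is a Lipschitz SDE with jumps on a finite--dimensional space, existence, uniqueness and cadlag regularity of $u^n$ are standard.

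Next I would apply the finite--dimensional It\^o formula to $\|u^n(t)\|_H^2$ in order to produce the analog of \eqref{cc9} at the approximation level. Combined with the coercivity condition \eqref{c9}, this yields
\begin{equation*}
\|u^n(t)\|_H^2+\alpha\int_0^t\|u^n(s)\|_V^2\,ds\leq \|u_0\|_H^2+\lambda\int_0^t\|u^n(s)\|_H^2\,ds+M^n(t),
\end{equation*}
with $M^n$ the sum of the Brownian and compensated Poisson martingale terms. Taking suprema, using the Burkholder--Davis--Gundy inequality for both martingales, and Gronwall's lemma give the uniform bound
\begin{equation*}
\sup_n\Big(E\sup_{0\leq t\leq T}\|u^n(t)\|_H^2+E\int_0^T\|u^n(s)\|_V^2\,ds\Big)<\infty.
\end{equation*}
By the Banach--Alaoglu theorem I extract a subsequence, still denoted $u^n$, and auxiliary processes, with $u^n$ converging weakly in $\mL^2_\mF(0,T;V)$ and weakly-$*$ in $\mL^{\infty,2}_{\mF,w}(0,T;H)$, while $A u^n \rightharpoonup \bar A$ in $\mL^2_\mF(0,T;V^*)$, $Bu^n\rightharpoonup\bar B$ in $\mL^2_\mF(0,T;H^d)$, and $\wtil A u^n\rightharpoonup \bar{\wtil A}$ in $\mL^{2,2}_\mF([0,T]\times E;H)$. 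Passing to the weak limit in the Galerkin equation shows that the limit $u$ satisfies \eqref{c8} with $\bar A,\bar B,\bar{\wtil A}$ in place of $Au,Bu,\wtil A u$. Identifying these weak limits with the true values is the classical monotonicity trick: for any $v\in\mL^2_\mF(0,T;V)$, the coercivity \eqref{c9} gives the deterministic inequality $-2\la A(u-v),u-v\ra+\lambda\|u-v\|_H^2\geq\|B(u-v)\|_H^2+\int_E\|\wtil A(u-v)\|_H^2 v(de)$; combining this with the It\^o expansion of $e^{-\lambda t}\|u^n(t)\|_H^2$, using lower semicontinuity of the norms under weak convergence, and then letting $v\to u$ in $\mL^2_\mF(0,T;V)$ forces $\bar A=Au$, $\bar B=Bu$ and $\bar{\wtil A}=\wtil A u$.

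For the energy equality \eqref{cc9} and uniqueness, the subtlety is that $u$ only satisfies the equation in $V^*$. I would first write \eqref{cc9} for the Galerkin approximants (where it is simply the finite--dimensional It\^o formula applied to $|u^n|^2$) and then pass to the limit term--by--term: weak convergence handles the drift and the $v(de)\,ds$ integrals, while the $L^2$ isometry of the Brownian and compensated Poisson integrals, together with the uniform estimates, controls the martingale terms. Uniqueness then follows by applying \eqref{cc9} to the difference $u_1-u_2$ of two solutions with the same initial datum: coercivity collapses the deterministic bulk into $\lambda\int_0^t\|u_1(s)-u_2(s)\|_H^2\,ds$, after which Gronwall's lemma forces $u_1\equiv u_2$. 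The principal obstacle I expect is the correct treatment of the jump term when passing to the weak limit in the quadratic variation: specifically, one must verify that the correction $\int_0^t\int_E\|\wtil A(s,e)u(s-)\|_H^2\,v(de)\,ds$ emerges with the right sign and that the small--jump part of the compensated Poisson integral in $H$ behaves well under the weak convergence, which is where the jump--specific coercivity piece $\int_E\|\wtil A(t,e)u\|_H^2 v(de)$ in \eqref{c9} is indispensable.
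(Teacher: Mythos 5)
Your overall strategy (direct Galerkin approximation of \eqref{c8}, uniform energy estimates via \eqref{c9}, weak compactness, limit passage) is a reasonable starting point, but two of your steps would fail as written, and they are precisely the points around which the paper restructures its argument. First, the identity \eqref{cc9} cannot be obtained by ``passing to the limit term-by-term'' in the finite-dimensional It\^o identities for $\|u^n(t)\|_H^2$: every term there --- $\la A u^n,u^n\ra$, $(Bu^n,u^n)$, $\|Bu^n\|_H^2$, $\|\wtil A u^n\|_H^2$, and $\|u^n(t)\|_H^2$ itself --- is quadratic in $u^n$, and weak convergence in $\mL^2_\mF(0,T;V)$ gives at best lower semicontinuity for the coercive combinations and nothing at all for the cross terms; an inequality is not enough, since the theorem asserts an exact identity. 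The workable route (the one the paper takes) is to pass to the limit only in the \emph{linear} weak formulation, tested against basis vectors $\nu_i$ and bounded progressive processes $f$, and then to invoke the infinite-dimensional It\^o formula of Gy\"ongy--Krylov for the limit equation to produce \eqref{cc9}. Note also that your monotonicity trick is superfluous here: $A$, $B$, $\wtil A$ are bounded linear operators, so $Au^n\rightharpoonup Au$, $Bu^n\rightharpoonup Bu$, $\wtil A u^n\rightharpoonup \wtil A u$ follow immediately from $u^n\rightharpoonup u$, with no need to identify abstract weak limits $\bar A,\bar B,\bar{\wtil A}$.

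Second, your scheme leaves the jump integrand $\wtil A(t,e)u(t-)$ undefined at the limit stage: the weak limit $u$ is determined only $dP\times dt$-a.e., so $u(t-)$ has no meaning until a cadlag $H$-valued modification has been produced, and, as the paper notes at the start of its proof (citing Pardoux), the weak limit of a Galerkin scheme is not automatically an $H$-valued cadlag adapted process. The paper's proof is organized around exactly this obstruction: in Step 1 it freezes the jump integrand at a given cadlag process $h$, solves the resulting equation \eqref{c15} by Galerkin approximation, obtains a cadlag version and the energy identity from the Gy\"ongy--Krylov theorem, and only then, in Step 2, recovers the full equation \eqref{c8} by a contraction mapping argument on small time intervals (via estimate \eqref{c17} and the BDG inequality), iterated finitely many times. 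If you insist on the direct Galerkin route you must at least (i) observe that $u^n(t-)=u^n(t)$ for $dP\times dt$-a.e.\ $(t,\omega)$ so that the two weak limits coincide, (ii) apply the Gy\"ongy--Krylov theorem to the limit equation to manufacture the cadlag version and justify, via the $L^2$ isometry for $\wtil N$-integrals, replacing the integrand by its left-limit version, and (iii) derive \eqref{cc9} from that It\^o formula rather than from term-by-term limits. None of these repairs appears in your sketch.
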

\begin{proof}
  Since the weak limit of the Galerkin approximation, as noted in \cite[Theorem 1.3]{Par1}, is not
  necessarily an $H$-valued cadlag adapted
  process, we split the proof into two steps.\\
  Step1. For any given $h\in \mL^2_{\mF}(0,T;V)\cap
  \mL^{\infty,2}_{\mF}(0,T;H)$, we first prove the following equation
  \begin{equation}
   \left\{
    \begin{split}\label{c15}
    du(t)&=A(t)u(t)dt+\int_E\widetilde{A}(t,e)u(t)v(de)dt+B(t)u(t)dW_t+\int_E\widetilde{A}(t,e)h(t-)\widetilde{N}(dedt),\\
    u_0&\in H
  \end{split}
  \right.
  \end{equation}
  has a unique solution $u\in\mL^2_{\mF}(0,T;V)\cap
  \mL^{\infty,2}_{\mF}(0,T;H)$. Let $\{\nu_n\}_{n=1}^\infty$ be a basis of $V$
  and a complete orthonomal basis of $H$. For $n\in\mathbb{N}$ and $1\leq i\leq n$, set
  $$V_n:=\textrm{span}\{\nu_1,\nu_2,\cdots,\nu_n\},\   u_{0,n}:=\sum_{i=1}^ng^0_{ni}\nu_i,\ u_n(t):=\sum_{i=1}^ng_{ni}(t)\nu_i$$
  where $g^0_{ni}:=(u_0,\nu_i)$ and
  $g_n(t):=(g_{n1}(t),g_{n2}(t),\cdots,g_{nn}(t))$ is the solution
  of the following It\^{o} equation:
  \begin{equation}
   \left\{
    \begin{split}\label{c10}
      dg_{ni}(t)=&\sum_{j=1}^ng_{nj}(t)\la A(t)\nu_j,\nu_i\ra
      dt+\sum_{j=1}^n\int_Eg_{nj}(t)(\widetilde A(t,e)\nu_j,\nu_i) v(de)dt\\
      &+\sum_{j=1}^ng_{nj}(t)(B(t)\nu_j,\nu_i)
      dW_t+\int_E(\widetilde{A}(t,e)h(t-),\nu_i)\widetilde N(dedt),\\
      g_{ni}(0)=&g^0_{ni}, \  i=1,2,\cdots,n.
    \end{split}
    \right.
  \end{equation}
 It follows from It\^{o}'s formula and condition \eqref{c9} that
 \begin{equation}
   \begin{split}\label{cc11}
     &E\|u_n(t)\|_H^2\\
     =&\|u_n(0)\|_H^2+2E\int_0^t\la A(s)u_n(s),u_n(s)\ra
     ds+2E\int_0^t\int_E(\widetilde A(s,e)u_n(s),u_n(s))v(de)ds\\
     &+\sum_{i=1}^nE\int_0^t|(B(s)u_n(s),\nu_i)|^2ds+\sum_{i=1}^nE\int_0^t\int_E|(\wtil
     {A}(s,e)h(s-),\nu_i)|^2v(de)ds\\
     \leq&\|u_n(0)\|_H^2+E\int_0^t\big[\lambda\|u_n(s)\|_H^2-\alpha\|u_n(s)\|_V^2-\|B(s)u_n(s)\|_H^2-\int_E\|\wtil{A}(s,e)u_n(s)\|_H^2v(de)\big]ds\\
     &E\int_0^t\int_E\|\widetilde
     A(s,e)u_n(s)\|_H^2v(de)ds+v(E)E\int_0^t\|u_n(s)\|_H^2ds+E\int_0^t\|B(s)u_n(s)\|_H^2ds\\
     &+E\int_0^t\int_E\|\wtil
     {A}(s,e)h(s-)\|_H^2v(de)ds.\\
   \end{split}
 \end{equation}
 Gronwall's inequality yields that
 \begin{equation}
   \sup_{0\leq t\leq T}E\|u_n(t)\|_H^2\leq C\Big(1+E\int_0^T\|h(s)\|_H^2ds\Big).
 \end{equation}
 From \eqref{cc11}, we have
 \begin{equation}
   E\int_0^T\|u_n(t)\|_V^2\leq C
 \end{equation}
which implies that there exist a subsequence $\{u_{n_k}\}$ and $u\in
\mL^2_{\mF}(0,T;V)$ such that
 \begin{equation}\label{c11}
   u_{n_k}\rightharpoonup u,\ \textrm{weakly}\ \textrm{in}\ \mL^2_{\mF}(0,T;V).
 \end{equation}
 Let $\{f(t),\ t\in[0,T]\}$ be a
 bounded progressive measurable process on $[0,T]$. It follows from \eqref{c10} that for
 each $\nu_i$ and $k\geq i$,
 \begin{equation*}
 \begin{split}
   &E\int_0^Tf(t)(u_{n_k}(t),\nu_i)dt\\
   =&E\int_0^Tf(t)\bigg[(u_0,\nu_i)+\int_0^t\la
   A(s)u_{n_k}(s),\nu_i\ra ds+\int_0^t\int_E(\widetilde
   A(s,e)u_{n_k}(s),\nu_i)v(de)ds\\
   &+\int_0^t(B(s)u_{n_k}(s),\nu_i)
      dW_s+\int_0^t\int_E(\widetilde{A}(s,e)h(s-),\nu_i)\widetilde
      N(deds)\bigg]dt.
 \end{split}
 \end{equation*}
 Since the operators are bounded, passing
 to the limit in the last equality we get
 \begin{equation}
   \begin{split}\label{c13}
     &E\int_0^Tf(t)(u(t),\nu_i)dt\\
     =&E\int_0^Tf(t)\bigg[(u_0,\nu_i)+\int_0^t\la
   A(s)u(s),\nu_i\ra ds+\int_0^t\int_E(\widetilde
   A(s,e)u(s),\nu_i)v(de)ds\\
   &+\int_0^t(B(s)u(s),\nu_i)
      dW_s+\int_0^t\int_E(\widetilde{A}(s,e)h(s-),\nu_i)\widetilde
      N(deds)\bigg]dt.
   \end{split}
 \end{equation}
 Indeed, it is sufficient to show
 \begin{equation}\label{c12}
   E\int_0^Tf(t)\Big(\int_0^t(B(s)u_{n_k}(s),\nu_i)
      dW_s\Big)dt\rightarrow E\int_0^Tf(t)\Big(\int_0^t(B(s)u(s),\nu_i)
      dW_s\Big)dt,
 \end{equation}
 and the convergence of other terms can be treated in an analogous way.
 Since $$B(\cdot)\in\mL^\infty_{\mF}(0,T;\mathscr{L}(V,H^d)),$$we can
 deduce from \eqref{c11} that for any $t\in[0,T]$,
 \begin{equation*}
   (B(\cdot)u_{n_k}(\cdot),\nu_i)\rightharpoonup (B(\cdot)u(\cdot),\nu_i),\ \textrm{weakly}\
   \textrm{in}\
   \mL^2_{\mF}(0,t;\mR^d).
 \end{equation*}
Since the stochastic integral with respect to a Brownian motion is a
linear and
 strong continuous mapping from $\mL^2_{\mF}(0,t;\mR^d)$ to $L^2(\mathcal
 {F}_t;\mR)$, it is weakly continuous. Therefore,
 \begin{equation*}
   \int_0^t(B(s)u_{n_k}(s),\nu_i)dW_s\rightharpoonup \int_0^t(B(s)u(s),\nu_i)dW_s,\ \textrm{weakly}\ \textrm{in}\
   L^2(\mathcal
 {F}_t;\mR)
 \end{equation*}
 and in particular,
 \begin{equation*}
   E\bigg[f(t)\int_0^t(B(s)u_{n_k}(s),\nu_i)dW_s\bigg]\rightarrow
   E\bigg[f(t)\int_0^t(B(s)u(s),\nu_i)dW_s\bigg].
 \end{equation*}
 Moreover,
 \begin{equation*}
   \bigg|E\Big[f(t)\int_0^t(B(s)u_{n_k}(s),\nu_i)dW_s\Big]\bigg|\leq\frac{1}{2}E|f(t)|^2+CE\int_0^T\|u_{n_k}(s)\|^2ds\leq
   C.
 \end{equation*}
 By the dominated convergence theorem we get \eqref{c12}.
 From \eqref{c13}, it follows that for $a.e.\  (t,\omega)\in[0,T]\times\Omega,$
 \begin{equation}
 \begin{split}\label{c14}
   u(t)=&u_0+\int_0^t
   A(s)u(s)ds+\int_0^t\int_E\widetilde
   A(s,e)u(s)v(de)ds\\
   &+\int_0^tB(s)u(s)
      dW_s+\int_0^t\int_E\widetilde{A}(s,e)h(s-)\widetilde
      N(deds).
 \end{split}
 \end{equation}
By \cite[Theorem 2, page 156]{GyKr}, there exists an $H$-valued
adapted cadlag process $\wtil{u}$ which coincides with $u$ for
$a.e.\ (t,\omega)$ and is equal to the right hand of \eqref{c14} for
all $t\in[0,T]$ a.s.. We identify $\wtil{u}$ with $u$. Furthermore,
we have
\begin{equation}
  \begin{split}
    \|u(t)\|_H^2=&\|u_0\|_H^2+2\int_0^t\la A(s)u(s),u(s)\ra
    ds+2\int_0^t\int_E(\wtil{A}(s,e)u(s),u(s))v(de)ds\\
    &+2\int_0^t(B(s)u(s),u(s))dW_s+\int_0^t\|B(s)u(s)\|_H^2ds\\
    &+\int_0^t\int_E\|\wtil{A}(s,e)h(s-)\|_H^2+2(u(s-),\wtil{A}(s,e)h(s-))\wtil{N}(deds)\\
    &+\int_0^t\int_E\|\wtil{A}(s,e)h(s-)\|_H^2v(de)ds.
  \end{split}
\end{equation}
Applying BDG inequality and condition \eqref{c9}, we have
\begin{equation}
  \begin{split}\label{c16}
    &E\sup_{0\leq t\leq
    T}\|u(t)\|_H^2+\alpha\int_0^T\|u(s)\|_V^2ds\\
    \leq&2\|u_0\|_H^2+2(\lambda+v(E))E\int_0^T\|u(s)\|_H^2ds+4E\sup_{0\leq
    t\leq T}\Big|\int_0^t(B(s)u(s),u(s))dW_s\Big|\\
    &+2E\sup_{0\leq
    t\leq
    T}\Big|\int_0^t\int_E\big[\|\wtil{A}(s,e)h(s-)\|_H^2+2(u(s-),\wtil{A}(s,e)h(s-))\big]\wtil{N}(deds)\Big|\\
    &+2E\int_0^T\int_E\|\wtil{A}(s,e)h(s-)\|_H^2v(de)ds\\
    \leq&2\|u_0\|_H^2+CE\int_0^T\|u(s)\|_H^2ds+CE\int_0^T\int_E\|\wtil{A}(s,e)h(s-)\|_H^2v(de)ds\\
    &+\frac{1}{2}E\sup_{0\leq t\leq
    T}\|u_t\|_H^2
  \end{split}
\end{equation}
where we have used the conclusion (see \cite[page 260-261]{RoZh} for
details)
\begin{equation*}
  \begin{split}
    &E\sup_{0\leq
    t\leq
    T}\Big|\int_0^t\int_E\big[\|\wtil{A}(s,e)h(s-)\|_H^2+2(u(s-),\wtil{A}(s,e)h(s-))\big]\wtil{N}(deds)\Big|\\
    \leq&E\int_0^T\int_E\|\wtil{A}(s,e)h(s-)\|_H^2v(de)ds+\frac{1}{4}E\sup_{0\leq
    t\leq T}\|u(t)\|_H^2.
  \end{split}
\end{equation*}
So
\begin{equation*}
  E\sup_{0\leq t\leq
    T}\|u(t)\|_H^2\leq
    C(\|u_0\|_H^2+E\int_0^T\|u(s)\|_V^2ds+E\int_0^T\|h(s)\|_H^2ds)<\infty
\end{equation*}
which implies $$u\in \mL^2_{\mF}(0,T;V)\cap
  \mL^{\infty,2}_{\mF}(0,T;H).$$
  If $u^1$ and $u^2$ are two solutions of the equation \eqref{c15} in $\mL^2_{\mF}(0,T;V)\cap
  \mL^{\infty,2}_{\mF}(0,T;H)$. By It\^{o}'s formula and condition \eqref{c9}, we have
  \begin{equation}
  \begin{split}
     E\|u^1(t)-u^2(t)\|_H^2+\alpha
     E\int_0^t\|u^1(s)-u^2(s)\|_V^2ds\leq(\lambda+v(E))E\int_0^t\|u^1(s)-u^2(s)\|_H^2ds
  \end{split}
  \end{equation}
  which implies
  $$E\int_0^T\|u^1(s)-u^2(s)\|_V^2ds=0.$$
  By a similar calculation as \eqref{c16}, we have
  $$E\sup_{0\leq t\leq T}\|u^1(t)-u^2(t)\|_H^2=0.$$
Step 2. We use the contraction mapping principle to prove the
existence and uniqueness of the solution of equation \eqref{c8}. Let
$h^1,h^2$ be in $\mL^2_{\mF}(0,t;V)\cap
  \mL^{\infty,2}_{\mF}(0,t;H)$ where $t\in[0,T]$ will be determined later. From Step 1 we know there exist
  $u^1,u^2\in\mL^2_{\mF}(0,t;V)\cap
  \mL^{\infty,2}_{\mF}(0,t;H)$ solving equation \eqref{c15} corresponding to $h^1$ and
  $h^2$ respectively. It follows from It\^{o}'s formula and condition \eqref{c9} that
  \begin{equation*}
    \begin{split}
      &\|u^1(s)-u^2(s)\|_H^2+\alpha\int_0^s\|u^1(r)-u^2(r)\|_V^2dr\\
      \leq&(\lambda+v(E))\int_0^s\|u^1(r)-u^2(r)\|_H^2dr+2\int_0^s\big(B(r)(u^1(r)-u^2(r)),u^1(r)-u^2(r)\big)dW_r\\
      &+\int_0^s\int_E\big[2\big(u^1(r-)-u^2(r-),\wtil{A}(r,e)(h^1(r-)-h^2(r-))\big)\\
      &+\|\wtil{A}(r,e)(h^1(r-)-h^2(r-))\|_H^2\big]\wtil{N}(dedr)\\
      &+\int_0^s\int_E\|\wtil{A}(r,e)(h^1(r-)-h^2(r-))\|_H^2v(de)dr,\ 0\leq s\leq t.\\
   \end{split}
  \end{equation*}
 Gronwall's inequality yields
 \begin{equation}\label{c17}
   E\|u^1(s)-u^2(s)\|_H^2\leq
 e^{(\lambda+v(E))T}E\int_0^t\int_E\|\wtil{A}(r,e)(h^1(r-)-h^2(r-))\|_H^2v(de)dr,\
 0\leq s\leq t.
 \end{equation}
 Using BDG inequality and \eqref{c17} , we can get
 \begin{equation*}
   \begin{split}
     &E\sup_{0\leq s\leq
     t}\|u^1(s)-u^2(s)\|_H^2+E\int_0^t\|u^1(s)-u^2(s)\|_V^2ds\\
     \leq&
     CE\int_0^t\|u^1(s)-u^2(s)\|_H^2ds+CE\int_0^t\int_E\|\wtil{A}(s,e)(h^1(s)-h^2(s))\|_H^2v(de)ds\\
     \leq&CE\int_0^t\|h^1(s)-h^2(s)\|_H^2ds\\
     \leq&CtE\sup_{0\leq s\leq t}\|h^1(s)-h^2(s)\|_H^2\\
     \leq&Ct\bigg(E\sup_{0\leq s\leq
     t}\|h^1(s)-h^2(s)\|_H^2+E\int_0^t\|h^1(s)-h^2(s)\|_V^2ds\bigg).
  \end{split}
 \end{equation*}
Taking $t$ small enough such that $Ct<1$, by contract mapping
theorem we know the equation \eqref{c8} has a unique solution in
$\mL^2_{\mF}(0,t;V)\cap
  \mL^{\infty,2}_{\mF}(0,t;H)$ on the interval $[0,t]$. We repeat the process on intervals
  $[t, 2t]$, $[2t, 3t]$, $\cdots$, and finally obtain the existence and
  uniqueness of the solution of equation \eqref{c8} after finite steps.
  \eqref{cc9} follows from \cite[Theorem 2, page 156]{GyKr}. The proof is complete.
\end{proof}
\subsection{Degenerate case}
In this section, we apply the abstract result proved in the previous
section to our equation \eqref{cc1} and prove that the inverse flow
$X^{-1}_\cdot(x)$ is a classical solution to SIPDE \eqref{cc1}.
Consider the following Cauchy problem
\begin{equation}
  \left\{
  \begin{split}\label{c18}
    du(t,x)=&[a^{ij}(t,x)u_{ij}(t,x)+b^i(t,x)u_i(t,x)+c(t,x)u(t,x)]dt\\
    &+\int_E[-u(t,x)+\rho(t,e,x)u(t,\phi_{t,e}^{-1}(x))]v(de)dt\\
    &+[\wtil{b}^{ik}(t,x)u_i(t,x)+\wtil{c}^k(t,x)u(t,x)]dW^k_t\\
    &+\int_E[-u(t-,x)+\rho(t,e,x)u(t-,\phi_{t,e}^{-1}(x))]\wtil{N}(dedt),\ (t,x)\in[0,T]\times\mR^n,\\
    u(0,x)=&\varphi(x).
  \end{split}
  \right.
\end{equation}
Let $m$ be a nonnegative integer and $K$ be a nonnegative constant.
We make the following three assumptions

I) the coefficients $a^{ij}$, $b^i$, $c$, $\wtil{b}^{ik}$,
$\wtil{c}^{k}$ are predictable for each $x$ and $\rho$ is
$\mP\otimes\mathcal {E}$ measurable for each $x$; the functions
$b^i$, $c$, $\wtil{b}^{ik}$, $\wtil{c}^{k}$, $\rho$ and their
derivatives with respect to $x$ up to the order $m$, and the
function $a^{ij}$ and its derivatives with respect to $x$ up to the
order $m+1$, are bounded by $K$; $g$ and its derivatives up to the
order $m$ are bounded by $K$
  and the determinant of the Jacobian matrix $I+\pat g(t,e,x)$ of the homeomorphic map
  $\phi_{t,e}(x)=x+g(t,e,x)$ is
  bounded below by a positive constant;

II) the matrix
$(a^{ij}-\frac{1}{2}\wtil{b}^{ik}\wtil{b}^{jk})\geq\delta I$, for
some $\delta>0$;

III) $\varphi\in W^m_2$.
\begin{rem}\label{remb1}
  Since $\phi_{t,e}^{-1}(x)=x-g(t,e,\phi_{t,e}^{-1}(x))$, our assumptions on the coefficient $g$ imply that the gradient of $\phi_{t,e}$ and the derivatives of $\phi^{-1}_{t,e}$ up to order $m$ with respect to $x$ are
  bounded.
\end{rem}
\begin{defn}
  A generalized solution of the problem \eqref{c18} is a
  function $u\in\mL^2_{\mF}(0,T;W^1_2)\cap
  \mL^{\infty,2}_{\mF}(0,T;L^2)$ such that for each $\eta\in
  C_0^\infty$ and almost all $(t,\omega)\in[0,T]\times\Omega$,
  \begin{equation}
    \begin{split}\label{c19}
      (u(t),\eta)_0=&(\varphi,\eta)_0+\int_0^t[-(a^{ij}u_i(s),\eta_j)_0+((b^i-a^{ij}_j)u_i(s)+cu(s),\eta)_0]ds\\
      &+\int_0^t\int_E(-u(s)+\rho(s,e)u(s,\phi_{s,e}^{-1}),\eta)_0v(de)ds\\
      &+\int_0^t(\wtil{b}^{ik}u_i(s)+\wtil{c}^ku(s),\eta)_0dW^k_s\\
      &+\int_0^t\int_E(-u(s-)+\rho(s,e)u(s-,\phi_{s,e}^{-1}),\eta)_0\wtil{N}(deds).
    \end{split}
  \end{equation}
\end{defn}
\begin{thm}\label{thmc2}
  Under conditions I), II), and III), the Cauchy problem \eqref{c18} has a unique generalized solution
  $u\in \mL^2_{\mF}(0,T;W^{m+1}_2)\cap \mL^{\infty,2}_{\mF}(0,T;W^m_2)$ such that the relation
  \eqref{c19} holds almost surely for any $\eta\in C_0^\infty$ and
  $t\in[0,T]$. In addition,
  \begin{equation}
  \begin{split}\label{cc19}
    \|u(t)\|_m^2=&\|\varphi\|_m^2+\int_0^t\big[-2(a^{ij}u_i(s),u_j(s))_m+2((b^i-a^{ij}_j)u_i(s)+cu(s),u(s))_m\big]ds\\
    &+2\int_0^t\int_E(-u(s)+\rho(s,e)u(s,\phi_{s,e}^{-1}),u(s))_mv(de)ds\\
    &+2\int_0^t(\wtil{b}^{ik}u_i(s)+\wtil{c}^ku(s),u(s))_mdW_s^k+\int_0^t\sum_{k=1}^d\|\wtil{b}^{ik}u_i(s)+\wtil{c}^ku(s)\|_m^2ds\\
    &+\int_0^t\int_E\big[\|-u(s-)+\rho(s,e)u(s-,\phi^{-1}_{s,e})\|_m^2\\
    &+2(u(s-),-u(s-)+\rho(s,e)u(s-,\phi^{-1}_{s,e}))_m\big]\wtil{N}(deds)\\
    &+\int_0^t\int_E\|-u(s-)+\rho(s,e)u(s-,\phi^{-1}_{s,e})\|_m^2v(de)ds.
  \end{split}
  \end{equation}
\end{thm}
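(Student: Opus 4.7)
The plan is to recast \eqref{c18} in the abstract form \eqref{c8} and invoke Theorem \ref{thmc1}. Set $H:=W^m_2$, $V:=W^{m+1}_2$, and define
\begin{equation*}
A(t)u := a^{ij}u_{ij}+b^iu_i+cu,\qquad B^k(t)u := \wtil b^{ik}u_i+\wtil c^{k}u,\qquad \wtil A(t,e)u := -u+\rho(t,e,\cdot)\,u\circ\phi^{-1}_{t,e}.
\end{equation*}
Assumption I), together with Remark \ref{remb1}, guarantees that $A(\cdot)\in\mL^\infty_{\mF}(0,T;\mathscr{L}(V,V^*))$, $B(\cdot)\in\mL^\infty_{\mF}(0,T;\mathscr{L}(V,H^d))$, and $\wtil A(\cdot,\cdot)\in\mL^\infty_{\mF}(0,T;\mathscr{L}(H,L^2(\mE,H)))$; the only nontrivial check is that the composition $u\mapsto \rho\cdot u\circ\phi^{-1}_{t,e}$ is bounded on $W^m_2$, which follows from the boundedness of the derivatives of $\rho$ and $\phi^{-1}_{t,e}$ up to order $m$ combined with the two-sided bound on $\det(I+\pat g)$ via a change of variables.

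The core technical step is to verify the coercivity condition \eqref{c9}. For $m=0$, integration by parts gives
\begin{equation*}
-2(A(t)u,u)_0 = 2\int a^{ij}u_iu_j\,dx - 2\int (b^i-a^{ij}_j)u_iu\,dx - 2\int c|u|^2\,dx,
\end{equation*}
while $\|B(t)u\|_0^2 = \int \wtil b^{ik}\wtil b^{jk}u_iu_j\,dx + 2\int \wtil b^{ik}\wtil c^k u_i u\,dx+\int|\wtil c|^2|u|^2\,dx$, so condition II) yields $-2(A(t)u,u)_0-\|B(t)u\|_0^2\geq 2\delta\int|\na u|^2\,dx$ up to lower-order terms. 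A change of variables $y=\phi^{-1}_{t,e}(x)$, the boundedness of $\rho$, the finiteness of $v(E)$, and the two-sided Jacobian bound give $\int_E\|\wtil A(t,e)u\|_0^2\,v(de)\leq C\|u\|_0^2$, and Young's inequality absorbs the cross terms to produce \eqref{c9} with $\alpha=\delta$ and a suitable $\lambda$.

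For $m\geq 1$, I would differentiate: for each multi-index $|\gamma|\leq m$, apply $\pat^\gamma$ to $A(t)u$, $B(t)u$, and $\wtil A(t,e)u$, and repeat the $m=0$ estimates with $\pat^\gamma u$ playing the role of $u$. Leibniz commutators involve derivatives of the coefficients up to order $m$ (and one extra derivative of $a^{ij}$, explaining the asymmetry in I)), and are controlled in $L^2$ by $C\|u\|_m\|u\|_{m+1}$, which Young's inequality absorbs into $\tfrac{\delta}{2}\|u\|_{m+1}^2+C\|u\|_m^2$. The main obstacle is the jump term: expanding $\pat^\gamma(\rho\,u\circ\phi^{-1}_{t,e})$ produces Fa\`a di Bruno-type sums of derivatives of $u$ evaluated at $\phi^{-1}_{t,e}(x)$, multiplied by products of derivatives of $\rho$ and $\pat\phi^{-1}_{t,e}$. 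Each such summand is handled by the change of variables $y=\phi^{-1}_{t,e}(x)$, using the boundedness of derivatives of $\phi^{-1}_{t,e}$ up to order $m$ and the lower bound on $\det(I+\pat g)$, to conclude $\int_E\|\wtil A(t,e)u\|_m^2\,v(de)\leq C\|u\|_m^2$. Once \eqref{c9} is in hand, Theorem \ref{thmc1} delivers a unique $u\in\mL^2_{\mF}(0,T;W^{m+1}_2)\cap\mL^{\infty,2}_{\mF}(0,T;W^m_2)$ solving the abstract equation; testing against any $\eta\in C_0^\infty$ and undoing the integration by parts in the $A(t)u$ term recovers \eqref{c19}, and \eqref{cc19} is exactly \eqref{cc9} rewritten for the present operators.
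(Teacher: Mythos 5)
Your plan coincides with the paper's: set $V=W^{m+1}_2$, $H=W^m_2$, define $A$, $B$, $\wtil A$, verify the coercivity condition \eqref{c9} by matching the leading-order terms against condition II) and absorbing the commutator/cross terms with Young's inequality (the jump operator being controlled by the change of variables $y=\phi^{-1}_{t,e}(x)$ and the Jacobian bound, exactly as you describe), and then invoke Theorem \ref{thmc1}. The coercivity verification in the paper is carried out directly in the $(\cdot,\cdot)_m$ inner product rather than by your ``$m=0$ plus differentiation'' scheme, but the estimates are the same in substance.

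There is, however, one step you gloss over that the paper treats explicitly and that your last sentence gets wrong as stated. Theorem \ref{thmc1} identifies $H=W^m_2$ with its dual, so the weak formulation it delivers is \eqref{c20}, i.e.\ an identity in the $(\cdot,\cdot)_m$ pairing; the definition of a generalized solution, \eqref{c19}, is an identity in the $(\cdot,\cdot)_0$ pairing. ``Testing against $\eta\in C_0^\infty$ and undoing the integration by parts'' does not pass from one to the other. The paper bridges the two with the isomorphism $\Lambda:=1-\Delta$ and the identity $(\Lambda^{-l}f,g)_{l+k}=(f,g)_k$: substituting $\zeta=\Lambda^{-m}\eta$ in \eqref{c20} yields \eqref{c19}, and conversely substituting $\eta=\Lambda^{m}\zeta$ in \eqref{c19} (using density of $C_0^\infty$ in $W^{m+1}_2$) shows that any generalized solution solves the abstract equation, which is what gives uniqueness \emph{in the class of generalized solutions} — an assertion of the theorem your proposal does not address, since the uniqueness you quote from Theorem \ref{thmc1} is only uniqueness for the abstract $W^m_2$-formulation. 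Adding this duality argument closes the gap; everything else in your outline is sound.
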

\begin{proof}
To apply the abstract result, we set
\begin{equation}
  V=W^{m+1}_2,H=W^m_2,V^*=W^{m-1}_2.
\end{equation}
For $\zeta\in V$ and $\eta\in V$, it follows from condition I) that
$$|-(a^{ij}\zeta_i(s),\eta_j)_m+((b^i-a^{ij}_j)\zeta_i(s)+c\zeta(s),\eta)_m|\leq
C\|\zeta\|_{m+1}\|\eta\|_{m+1}$$ where $C$ is independent of
$t,\omega,\zeta$ and $\eta$. Consequently, the formula
$$\la
A(t)\zeta,\eta\ra:=-(a^{ij}(t)\zeta_i,\eta_j)_m+((b^i(t)-a^{ij}_j(t))\zeta_i+c(t)\zeta,\eta)_m$$
defines a linear operator
$A(\cdot)\in\mL^\infty_{\mF}(0,T;\mathscr{L}(V,V^*))$ and from the
elementary inequality $2ab\leq\epsilon a^2+\frac{1}{\epsilon}b^2$,
\begin{equation}
  \begin{split}
    2\la
A(t)\eta,\eta\ra=&-2(a^{ij}\eta_i,\eta_j)_m+2((b^i-a^{ij}_j)\eta_i+c\eta,\eta)_m\\
\leq&-2\sum_{|\gamma|=m}(a^{ij}\pat^\gamma\eta_i,\pat^\gamma\eta_j)_0+\epsilon_1\|\eta\|_{m+1}^2+C\|\eta\|_{m}^2.
  \end{split}
\end{equation}
In view of Remark \ref{remb1}, for $\eta\in V$ and $\zeta\in H$ the
formulas
$$B(t)\eta:=(\wtil{b}^{i1}(t)\eta_i+\wtil{c}^1(t)\eta,\cdots,\wtil{b}^{id}(t)\eta_i+\wtil{c}^d(t)\eta)$$
and
$$\wtil{A}(t,e)\zeta:=-\zeta+\rho(t,e)\zeta(\phi^{-1}_{t,e})$$
defines two linear operators
$B(\cdot)\in\mL^\infty_{\mF}(0,T;\mathscr{L}(V,H^d))$ and
$\tilde{A}(\cdot,\cdot)\in\mL^\infty_{\mF}(0,T;\mathscr{L}(H,L^2(\mE,H)))$
respectively. Moreover,
\begin{equation*}
  \begin{split}
    \|B(t)\eta\|_m^2=&(\wtil{b}^{ik}\eta_i+\wtil{c}^k\eta,\wtil{b}^{ik}\eta_i+\wtil{c}^k\eta)_m\\
    =&(\wtil{b}^{ik}\eta_i,\wtil{b}^{ik}\eta_i)_m+2(\wtil{b}^{ik}\eta_i,\wtil{c}^k\eta)_m+(\wtil{c}^k\eta,\wtil{c}^k\eta)_m\\
    \leq&\sum_{|\gamma|=m}(\pat^\gamma(\wtil{b}^{ik}\eta_i),\pat^\gamma(\wtil{b}^{ik}\eta_i))_0+\epsilon\|\eta\|_{m+1}^2+C\|\eta\|_m^2\\
    \leq&\sum_{k=1}^d\sum_{|\gamma|=m}\|\wtil{b}^{ik}\pat^\gamma\eta_i\|_0^2+\epsilon_2\|\eta\|_{m+1}^2+C\|\eta\|_m^2
  \end{split}
\end{equation*}
and
$$\int_E\|\wtil{A}(t,e)\zeta\|_m^2v(de)\leq C\|\zeta\|_m^2.$$
So for any $\eta\in V$,
\begin{equation*}
  \begin{split}
    &2\la
    A(t)\eta,\eta\ra+\|B(t)\eta\|^2_m+\int_E\|\wtil{A}(t,e)\eta\|_m^2v(de)\\
    \leq&-2\sum_{|\gamma|=m}(a^{ij}\pat^\gamma\eta_i,\pat^\gamma\eta_j)_0+(\epsilon_1+\epsilon_2)\|\eta\|_{m+1}^2+C\|\eta\|_m^2+\sum_{|\gamma|=m}\sum_{k=1}^d\|\wtil{b}^{ik}\pat^\gamma\eta_i\|_0^2\\
    \leq&-2\delta\sum_{|\gamma|=m}\sum_{i=1}^n\|\pat^\gamma\eta_i\|^2_0+(\epsilon_1+\epsilon_2)\|\eta\|_{m+1}^2+C\|\eta\|_m^2\\
    \leq&-2\delta\|\eta\|_{m+1}^2+(\epsilon_1+\epsilon_2)\|\eta\|_{m+1}^2+(C+2\delta)\|\eta\|_m^2.\\
  \end{split}
\end{equation*}
Taking $\epsilon_1+\epsilon_2=\delta$, we have
$$-2\la
A(t)\eta,\eta\ra+C\|\eta\|_m^2\geq\delta\|\eta\|_{m+1}^2+\|B(t)\eta\|^2_m+\int_E\|\wtil{A}(t,e)\eta\|_m^2v(de).$$
So the coercive condition \eqref{c9} is satisfied. According to
Theorem \ref{thmc1}, there exists a unique function $u\in
\mL^2_{\mF}(0,T;W^{m+1}_2)\cap\mL^{\infty,2}_{\mF}(0,T;W^m_2)$ such
that almost surely for any $\zeta\in W^{m+1}_2$ and $t\in[0,T]$,
 \begin{equation}
    \begin{split}\label{c20}
      (u(t),\zeta)_m=&(\varphi,\zeta)_m+\int_0^t[-(a^{ij}u_i(s),\zeta_j)_m+((b^i-a^{ij}_j)u_i(s)+cu(s),\zeta)_m]ds\\
      &+\int_0^t\int_E(-u(s)+\rho(s,e)u(s,\phi_{s,e}^{-1}),\zeta)_mv(de)ds\\
      &+\int_0^t(\wtil{b}^{ik}u_i(s)+\wtil{c}^ku(s),\zeta)_mdW^k_s\\
      &+\int_0^t\int_E(-u(s)+\rho(s,e)u(s,\phi_{s,e}^{-1}),\zeta)_m\wtil{N}(deds).
    \end{split}
  \end{equation}
Let $\Delta$ represent the Laplacian on $\mR^n$. It is well known
that the operator $\Lambda:=1-\Delta$ which maps $W^2_2$ into $L^2$
has an inverse $\Lambda^{-1}$ satisfying
$\Lambda^{-1}W^l_2=W^{l+2}_2$ for any integer $l$. Moreover, if $k$
is a nonnegative integer such that $l+k\geq0$, then for $f\in W^k_2$
, $g\in W^{l+k}_2\cap W^k_2$, we have
\begin{equation}\label{c21}
  (\Lambda^{-l}f,g)_{l+k}=(f,g)_k.
\end{equation}
For $\eta\in C_0^\infty$, in view of \eqref{c21}, we can get
\eqref{c19} by replacing $\zeta$ by $\Lambda^{-m}\eta$ in
\eqref{c20}. So $u$ is a generalized solution of the problem
\eqref{c18}. Suppose $\hat{u}\in
\mL^2_{\mF}(0,T;W^{m+1}_2)\cap\mL^{\infty,2}_{\mF}(0,T;W^m_2)$ is
another generalized solution of the problem \eqref{c18}. For
$\zeta\in C_0^\infty$, due to \eqref{c21} again and the fact that
$C_0^\infty$ is dense in $W^{m+1}_2$, we replace $\eta$ by
$\Lambda^m\zeta$ in \eqref{c19} and conclude $\hat{u}$ is also a
solution of the abstract equation \eqref{c8}. Theorem \ref{thmc1}
yields $u=\hat{u}$ so the uniqueness is proved. \eqref{cc19} follows
from \eqref{cc9}. The proof is complete.
\end{proof}
Next we consider the equation \eqref{c18} in the degenerate case,
i.e., the assumption II) is replaced by

II')  the matrix
$(a^{ij}-\frac{1}{2}\wtil{b}^{ik}\wtil{b}^{jk})\geq0$.\\
The following lemma is borrowed from \cite[Remark 2.1, page
340]{KrRo3} with $p=2$.
\begin{lem}\label{lemc1}
  Under conditions I), II') and III), we have for $u\in W^{m+1}_2$,
  \begin{equation}
  \begin{split}\label{cc22}
    &-2(a^{ij}u_i,u_j)_m+2((b^i-a^{ij}_j)u_i+cu,u)_m+\sum_{k=1}^d\|\wtil{b}^{ik}u_i+\wtil{c}^ku\|_m^2\\
    &+2\int_E(-u+\rho(s,e)u(\phi_{s,e}^{-1}),u)_mv(de)+\int_E\|-u+\rho(s,e)u(\phi^{-1}_{s,e})\|_m^2v(de)\\
    \leq&N\|u\|_m^2
  \end{split}
\end{equation}
where the constant $N$ depends only on $K$, $n$, $d$, $m$ and
$v(E)$.
\end{lem}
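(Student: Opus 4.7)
The plan is to split the left-hand side of \eqref{cc22} into three groups: (i) the second-order plus diffusion quadratic part $-2(a^{ij}u_i,u_j)_m+\sum_k\|\wtil{b}^{ik}u_i+\wtil{c}^k u\|_m^2$; (ii) the first- and zeroth-order contribution $2((b^i-a^{ij}_j)u_i+cu,u)_m$; and (iii) the full jump part $2\int_E(-u+\rho u(\phi^{-1}_{s,e}),u)_m v(de)+\int_E\|{-u}+\rho u(\phi^{-1}_{s,e})\|_m^2 v(de)$. The goal is to bound each group separately by a constant multiple of $\|u\|_m^2$, with condition II') supplying the coercivity needed to absorb the top-order contribution of group (i).

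For group (iii) I would directly expand the two squared/inner product terms to obtain the pointwise (in $e$) algebraic identity
$$2(-u+\rho u(\phi^{-1}_{s,e}),u)_m+\|{-u}+\rho u(\phi^{-1}_{s,e})\|_m^2=-\|u\|_m^2+\|\rho u(\phi^{-1}_{s,e})\|_m^2,$$
reducing the task to bounding $\int_E\|\rho u(\phi^{-1}_{s,e})\|_m^2 v(de)$ by $C\|u\|_m^2$. I would apply Fa\`a di Bruno's formula to $\pat^\alpha(\rho\cdot u\circ\phi^{-1}_{s,e})$ for each $|\alpha|\le m$, using boundedness of $\rho$ together with boundedness of the derivatives of $\phi^{-1}_{s,e}$ up to order $m$ (Remark \ref{remb1}), then change variables $y=\phi^{-1}_{s,e}(x)$ whose Jacobian $|\det(I+\pat g(s,e,y))|$ is uniformly bounded above and below in $(s,e,y)$ by condition I). Finiteness of $v(E)$ then closes this group.

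For group (i) I would expand $\pat^\alpha(a^{ij}u_i)$ and $\pat^\alpha(\wtil{b}^{ik}u_i+\wtil{c}^k u)$ by Leibniz for each $|\alpha|\le m$, separating the principal pieces $a^{ij}\pat^\alpha u_i$ and $\wtil{b}^{ik}\pat^\alpha u_i+\wtil{c}^k\pat^\alpha u$ from commutator remainders; each remainder carries at least one derivative on the bounded coefficients and therefore at most $m$ derivatives on $u$. The purely principal contributions at the top level $|\alpha|=m$ assemble into
$$\sum_{|\alpha|=m}\int_{\mR^n}\Big[\sum_k\wtil{b}^{ik}\wtil{b}^{jk}-2a^{ij}\Big]\pat^\alpha u_i\,\pat^\alpha u_j\,dx\le 0$$
by II'). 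All remaining cross terms contain \emph{exactly one} factor of order $m+1$; for these I would integrate by parts (using the symmetry $a^{ij}=a^{ji}$) to transfer the extra derivative onto the lower-order factor. For instance the principal-principal cross between $\wtil{b}^{ik}\pat^\alpha u_i$ and $\wtil{c}^k\pat^\alpha u$ becomes
$$\int_{\mR^n}\wtil{b}^{ik}\wtil{c}^k(\pat^\alpha u)_i\,\pat^\alpha u\,dx=-\tfrac12\int_{\mR^n}(\wtil{b}^{ik}\wtil{c}^k)_i(\pat^\alpha u)^2\,dx,$$
and the worst case, a single derivative $\pat_l a^{ij}$ paired with $\pat_i\pat_j\pat^{\alpha-e_l}u\cdot\pat_l\pat^{\alpha-e_l}u$, needs two successive integrations by parts (first on $\pat_j$, then on $\pat_i$) before all factors reach order $\le m$. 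Group (ii) is then handled by the same identity
$$\int_{\mR^n}(b^i-a^{ij}_j)(\pat^\alpha u)_i\,\pat^\alpha u\,dx=-\tfrac12\int_{\mR^n}(b^i-a^{ij}_j)_i(\pat^\alpha u)^2\,dx$$
at $|\alpha|=m$, with all other Leibniz pieces bounded directly by Cauchy--Schwarz and boundedness of the coefficients.

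The main obstacle lies in group (i): a naive Cauchy--Schwarz on the Leibniz cross terms would give only $\|u\|_{m+1}\|u\|_m$, which is \emph{not} controlled by $\|u\|_m^2$. One must therefore genuinely remove the extra derivative by integration by parts before applying Cauchy--Schwarz, and the single-derivative-on-$a^{ij}$ case requires the two-step manoeuvre above exploiting the symmetry of $a^{ij}$. Once this careful bookkeeping is complete, II') absorbs the top-order $(m+1)$th-derivative contribution and the uniform bounds in I) together with $v(E)<\infty$ yield a constant $N$ depending only on $K$, $n$, $d$, $m$, and $v(E)$.
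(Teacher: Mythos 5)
Your proposal is essentially correct, but be aware that the paper does not prove this lemma at all: it imports the continuous part verbatim from Krylov--Rozovskii \cite[Remark 2.1, page 340]{KrRo3} with $p=2$ and tacitly leaves the new jump terms to the reader. What you have written is therefore a reconstruction of the argument behind that citation, plus the missing verification for the jumps. The jump group is handled exactly right: with $v=-u+\rho\,u(\phi^{-1}_{s,e})$ the identity $2(v,u)_m+\|v\|_m^2=\|u+v\|_m^2-\|u\|_m^2$ reduces everything to $\int_E\|\rho\, u(\phi^{-1}_{s,e})\|_m^2v(de)\le C\|u\|_m^2$, which follows from Remark \ref{remb1}, the two-sided control of $\det(I+\pat g)$ in the change of variables, and $v(E)<\infty$. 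The principal part is the standard degenerate-parabolic energy computation, and you correctly isolate the genuine difficulty (Cauchy--Schwarz alone gives only $\|u\|_{m+1}\|u\|_m$) and the two key devices: negativity of the top symbol $\sum_k\wtil{b}^{ik}\wtil{b}^{jk}-2a^{ij}$ via II'), and symmetrization-plus-integration-by-parts for the order-$(m+1)$ cross terms.

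One cross term deserves more care than your sketch gives it: the pairing $2\int_{\mR^n}\wtil{b}^{ik}\pat^\alpha u_i\cdot\pat_l\wtil{b}^{jk}\,\pat^{\alpha-e_l}u_j\,dx$ arising from squaring the diffusion term at $|\alpha|=m$. Its coefficient $\sum_k\wtil{b}^{ik}\pat_l\wtil{b}^{jk}$ is \emph{not} symmetric in $(i,j)$, so the one-line symmetrization you use for $\pat_l a^{ij}$ does not apply directly. It does go through after splitting the coefficient into its symmetric part $\tfrac12\pat_l\bigl(\sum_k\wtil{b}^{ik}\wtil{b}^{jk}\bigr)$, treated as before, and its antisymmetric part, whose dangerous contribution vanishes after one integration by parts because it is contracted against the symmetric tensor $\pat_i\pat_j(\pat^{\alpha-e_l}u)$; equivalently, your double integration by parts closes up after averaging with the index-swapped copy of the term. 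This step consumes second derivatives of $\wtil{b}^{ik}$ (and of $a^{ij}$), which condition I) supplies for the values of $m$ actually used later in the paper ($m>2+n/2$), though for $m\le1$ the hypothesis as literally stated is borderline. With that point filled in, the constant $N$ depends only on $K$, $n$, $d$, $m$ and $v(E)$, as required.
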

\begin{thm}\label{thmc3}
Assume that conditions I), II') and III) hold. Then the equation
\eqref{c18} has a unique generalized solution
\begin{equation*}
  u\in
  \mL^{\infty,2}_{\mF}(0,T;W^{m-1}_2)\cap\mL^2_{\mF}(0,T;W^m_2)\cap\mL^{\infty,2}_{\mF,w}(0,T;W^m_2).
\end{equation*}
Moreover,
\begin{equation}\label{cc21}
  E\sup_{t\in[0,T]}\|u\|_m^2\leq
C\|\varphi\|_m^2
\end{equation} where $C$ depends on $v(E)$, $n$, $d$, $K$, $m$ and $T$.
\end{thm}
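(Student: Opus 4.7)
The plan is to reduce to the non-degenerate case covered by Theorem~\ref{thmc2} via a vanishing viscosity regularization. For each $\epsilon\in(0,1]$, replace the diffusion matrix $a^{ij}$ by $a^{\epsilon,ij}:=a^{ij}+\epsilon\delta^{ij}$, so that condition II) holds with $\delta=\epsilon$, while I) and III) are preserved with constants uniform in $\epsilon$. Theorem~\ref{thmc2} then produces a unique generalized solution $u^\epsilon\in\mL^2_{\mF}(0,T;W^{m+1}_2)\cap\mL^{\infty,2}_{\mF}(0,T;W^m_2)$ of the perturbed problem, satisfying the energy identity \eqref{cc19}.

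The core step is an $\epsilon$-uniform a priori estimate. Apply Lemma~\ref{lemc1} to $u^\epsilon$ with the perturbed coefficients: the additional contribution produced by $\epsilon\delta^{ij}$ on the left-hand side of \eqref{cc22} is $-2\epsilon\sum_{|\gamma|\le m}\sum_i\|\pat^\gamma u^\epsilon_i\|_0^2\le 0$, so Lemma~\ref{lemc1} still yields the bound $N\|u^\epsilon\|_m^2$ with $N$ depending only on $K$, $n$, $d$, $m$ and $v(E)$, but not on $\epsilon$. Substituting into \eqref{cc19}, applying BDG to the Brownian and compensated-Poisson martingale parts exactly as in \eqref{c16}, and invoking Gronwall yields
\begin{equation*}
E\sup_{t\in[0,T]}\|u^\epsilon(t)\|_m^2 \le C\|\varphi\|_m^2
\end{equation*}
with $C$ independent of $\epsilon$.

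Next, Banach--Alaoglu provides a subsequence with $u^{\epsilon_k}\rightharpoonup u$ weakly in $\mL^2_{\mF}(0,T;W^m_2)$ and weak-$\ast$ in $\mL^{\infty,2}_{\mF,w}(0,T;W^m_2)$. Passing to the weak limit in the integrated weak formulation \eqref{c19} (the extra $\epsilon\delta^{ij}$ contribution vanishes against any $\eta\in C_0^\infty$, while the stochastic integrals against $W$ and $\wtil{N}$ converge by weak continuity of the stochastic integral, exactly as around \eqref{c13}; the nonlocal term $\rho u^{\epsilon_k}(\phi^{-1}_{s,e})$ is handled using Remark~\ref{remb1} and a change of variables) shows that $u$ satisfies \eqref{c19} with the original coefficients. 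Weak lower semicontinuity of the supremum norm gives \eqref{cc21}. The strong cadlag property in $W^{m-1}_2$, together with weak cadlag in $W^m_2$, follows from \cite[Theorem 2, page 156]{GyKr} applied in the Gelfand triple $W^m_2\hookrightarrow W^{m-1}_2\hookrightarrow W^{m-2}_2$, once one verifies that $u$ solves the abstract equation \eqref{c8} in that triple.

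For uniqueness, the difference $w:=u^1-u^2$ of two generalized solutions solves the homogeneous equation with zero initial data; the energy identity \eqref{cc9} for $w$ in the triple $W^m_2\hookrightarrow W^{m-1}_2\hookrightarrow W^{m-2}_2$ combined with Lemma~\ref{lemc1} at the level $m-1$ (whose right-hand side is $N\|w\|_{m-1}^2$) and Gronwall's inequality forces $w\equiv 0$ in $\mL^{\infty,2}_{\mF}(0,T;W^{m-1}_2)$, hence in $\mL^2_{\mF}(0,T;W^m_2)$ as well. The main obstacle I expect is the weak-limit step for the Poisson terms: one has to combine the weak convergence in $W^m_2$ with the composition by the random diffeomorphism $\phi^{-1}_{t,e}$ and with the stochastic integration against $\wtil{N}$, which is handled by exploiting the uniform bounds on $\phi^{-1}_{t,e}$ and its derivatives from Remark~\ref{remb1}, together with the $L^2(\mE,H)$-structure used in Theorem~\ref{thmc1}.
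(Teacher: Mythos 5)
Your existence argument is essentially the paper's: the same vanishing-viscosity regularization $a^{\eps ij}=a^{ij}+\eps\delta^{ij}$, the same $\eps$-uniform bound $E\sup_t\|u^\eps\|_m^2\le C\|\varphi\|_m^2$ obtained from Lemma \ref{lemc1} and the energy identity \eqref{cc19} via BDG and Gronwall, and a weak limit in $\mL^2_{\mF}(0,T;W^m_2)$. Where you diverge is in how the limit is identified and given cadlag regularity: the paper does not pass directly from the weak limit to \cite[Theorem 2, page 156]{GyKr}; it first proves that $\{u^\eps\}$ is Cauchy in $\mL^{\infty,2}_{\mF}(0,T;W^{m-1}_2)$, with the quantitative rate $E\sup_t\|u^\eps-u^{\eps'}\|_{m-1}^2\le C|\eps-\eps'|\,\|\varphi\|_m^2$, obtained by applying It\^o's formula to the difference in the triple $W^m_2\hookrightarrow W^{m-1}_2\hookrightarrow W^{m-2}_2$. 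This strong convergence both identifies the weak limit and delivers the $\mL^{\infty,2}_{\mF}(0,T;W^{m-1}_2)$ membership at once, and it makes the passage to the limit in the Poisson and Brownian integrals painless. Your alternative (verify the limit equation in the triple, then invoke \cite{GyKr}) can be made to work, but note two soft spots: Banach--Alaoglu does not directly give weak-$\ast$ compactness in $\mL^{\infty,2}_{\mF,w}(0,T;W^m_2)$ (that space is not presented as a dual), and ``weak lower semicontinuity of the supremum norm'' for $E\sup_t\|\cdot\|_m^2$ is not automatic from weak convergence in $\mL^2_{\mF}(0,T;W^m_2)$; the paper handles both by the argument of \cite[Theorem 3.1]{KrRo3}, which requires the weak cadlag property first.

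The genuine gap is in your uniqueness argument. A generalized solution is defined (Definition 3.1) as an element of $\mL^2_{\mF}(0,T;W^1_2)\cap\mL^{\infty,2}_{\mF}(0,T;L^2)$ only, so for $m\ge 2$ the difference $w=u^1-u^2$ of two generalized solutions need not lie in $\mL^2_{\mF}(0,T;W^m_2)$, and the energy identity in the triple $W^m_2\hookrightarrow W^{m-1}_2\hookrightarrow W^{m-2}_2$ that you invoke is not available; your argument only proves uniqueness within the more regular class. The paper instead works at the level $m=0$, in the triple $W^1_2\hookrightarrow L^2\hookrightarrow W^{-1}_2$, which is exactly the regularity a generalized solution is guaranteed to have. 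Moreover, rather than taking expectations and applying Gronwall (which requires checking that the Brownian and compensated-Poisson local martingales in the energy identity are true martingales), the paper multiplies by $e^{-Nt}$ with $N$ from Lemma \ref{lemc1} so that $e^{-Nt}\|w(t)\|_0^2$ is dominated by a nonnegative local martingale started at zero, which must vanish identically. You should either reproduce that level-$0$ argument or explicitly restrict your uniqueness claim to the regular class and justify the martingale property you implicitly use.
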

%
\begin{proof}
Uniqueness. We only need to prove that the equation \eqref{c18} has
solution $u\equiv0$ when $\varphi=0$. Using It\^{o}'s formula (see
\cite[Theorem 2, page 156]{GyKr}) to $\|u(t)\|_0^2$ and
$\|u(t)\|_0^2e^{-Nt}$ where $N$ is the constant in Lemma
\ref{lemc1}, we can get
\begin{equation*}
  \begin{split}
    0\leq &e^{-Nt}\|u(t)\|_0^2\\
    \leq&2\int_0^te^{-Ns}(\wtil{b}^{ik}u_i(s)+\wtil{c}^ku(s),u(s))_0dW_s^k\\
    &+\int_0^t\int_Ee^{-Ns}\big[2(u(s-),-u(s-)+\rho(s,e)u(s-,\phi_{s,e}^{-1}))_0\\
    &+\|-u(s-)+\rho(s,e)u(s-,\phi_{s,e}^{-1})\|_0^2\big]\wtil{N}(deds).
  \end{split}
\end{equation*}
Since a nonnegative local martingale with zero initial value equals
to zero, we have $u(t)=0$, $t\in[0,T]$ almost surely.\\
Existence.
  For $\eps>0$, set $a^{\eps ij}:=a^{ij}+\eps\delta^{ij}$. Denote by
  $u^\eps$ the unique generalized solution of \eqref{c18} with
  $a^{ij}$ replaced by $a^{\eps ij}$.   From Theorem \ref{thmc2}, we
  know
  $u^\eps\in \mL^2_{\mF}(0,T;W^{m+1}_2)\cap\mL^{\infty,2}_{\mF}(0,T;W^m_2)$ and satisfies for
  any $\zeta\in W^{m+1}_2$,
   \begin{equation}
    \begin{split}\label{c22}
      (u^\eps(t),\zeta)_m=&(\varphi,\zeta)_m+\int_0^t[-(a^{\eps ij}u^\eps_i(s),\zeta_j)_m+((b^i-a^{ij}_j)u^\eps_i(s)+cu^\eps(s),\zeta)_m]ds\\
      &+\int_0^t\int_E(-u^\eps(s)+\rho(s,e)u^\eps(s,\phi_{s,e}^{-1}),\zeta)_mv(de)ds\\
      &+\int_0^t(\wtil{b}^{ik}u^\eps_i(s)+\wtil{c}^ku^\eps(s),\zeta)_mdW^k_s\\
      &+\int_0^t\int_E(-u^\eps(s-)+\rho(s,e)u^\eps(s-,\phi_{s,e}^{-1}),\zeta)_m\wtil{N}(deds).
    \end{split}
  \end{equation}
We first prove $u^\eps$ satisfies \eqref{cc21} with $C$ depends only
on $n$, $d$, $K$, $m$, $T$, and $v(E)$. It follows from Lemma
\ref{lemc1}
and \eqref{cc19} that
\begin{equation}
  \begin{split}\label{cc25}
    \|u^\eps(t)\|_m^2\leq&\|\varphi\|_m^2+N\int_0^t\|u^\eps(s)\|_m^2ds+2\int_0^t(\wtil{b}^{ik}u^\eps_i(s)+\wtil{c}^ku^\eps(s),u^\eps(s))_mdW_s^k\\
   &+\int_0^t\int_E\|-u^\eps(s-)+\rho(s,e)u^\eps(s-,\phi^{-1}_{s,e})\|_m^2\\
   &+2(u^\eps(s-),-u^\eps(s-)+\rho(s,e)u^\eps(s-,\phi^{-1}_{s,e}))_m\wtil{N}(deds).
  \end{split}
\end{equation}
From the proof of \cite[Lemma 2.1, page 239]{KrRo3}, we find
\begin{equation}
  \begin{split}\label{cc24}
    |(\wtil{b}^{ik}u^\eps_i(s)+\wtil{c}^ku^\eps(s),u^\eps(s))_m|\leq&\sum_{|\gamma|=m}|(\wtil{b}^{ik}\pat^\gamma u^\eps_i(s),\pat^\gamma u^\eps(s))_0|+C\|u^\eps(s)\|_m^2\\
    =&\sum_{|\gamma|=m}\frac{1}{2}|(\wtil{b}^{ik},\pat^i[(\pat^\gamma u^\eps(s))^2])_0|+C\|u^\eps(s)\|_m^2\\
    =&\sum_{|\gamma|=m}\frac{1}{2}|(-\pat^i\wtil{b}^{ik},(\pat^\gamma u^\eps(s))^2)_0|+C\|u^\eps(s)\|_m^2\\
    \leq&C\|u^\eps(s)\|_m^2.
  \end{split}
\end{equation}
By Gronwall's inequality and BDG inequality, as well as
\eqref{cc24}, we can deduce from \eqref{cc25} that
\begin{equation}
  \begin{split}\label{cc26}
  E\sup_{0\leq t\leq
  T}\|u^\eps(t)\|_m^2\leq C\|\varphi\|_m^2.
  \end{split}
\end{equation}
It follows from the reflexivity of the process space
$\mL^2_{\mF}(0,T;W^m_2)$ that there exist a subsequence, still
denoted by $u^\eps$, and $u\in \mL^2(0,T;W^m_2)$ such that as
$\eps\rightarrow0$,
$$u^{\eps}\rightharpoonup u,\ \textrm{weakly}\ \textrm{in}\ \mL^2_{\mF}(0,T;W^m_2).$$
Next we prove $u^\eps$ is also a Cauchy sequence in the space
$\mL^{\infty,2}(0,T;W^{m-1}_2)$. For any $\eta\in W^m_2$, replacing
$\zeta$ by $\Lambda^{-1}\eta$ in \eqref{c22} and using the relation
\eqref{c21}, we have
  \begin{equation}
    \begin{split}\label{c23}
      (u^\eps(t),\eta)_{m-1}=&(\varphi,\eta)_{m-1}+\int_0^t[-(a^{\eps ij}u^\eps_i(s),\eta_j)_{m-1}+((b^i-a^{ij}_j)u^\eps_i(s)+cu^\eps(s),\eta)_{m-1}]ds\\
      &+\int_0^t\int_E(-u^\eps(s)+\rho(s,e)u^\eps(s,\phi_{s,e}^{-1}),\eta)_{m-1}v(de)ds\\
      &+\int_0^t(\wtil{b}^{ik}u^\eps_i(s)+\wtil{c}^ku^\eps(s),\eta)_{m-1}dW_s\\
      &+\int_0^t\int_E(-u^\eps(s)+\rho(s,e)u^\eps(s,\phi_{s,e}^{-1}),\eta)_{m-1}\wtil{N}(deds).
    \end{split}
  \end{equation}
  The above equality implies that $u^\eps$ is also the unique solution of the
  following evolution equation
  \begin{equation*}
   \left\{
    \begin{split}
    du(t)=&A'(t)u(t)dt+\int_E\widetilde{A}'(t,e)u(t)v(de)dt+B'(t)u(t)dW_t+\int_E\widetilde{A}'(t,e)u(t-)\widetilde{N}(dedt),\\
     u(0)=&\varphi
    \end{split}
    \right.
  \end{equation*}
  with triple
  $$V=W^m_2,\ H=W^{m-1}_2,\ V^*=W^{m-2}_2,$$
  and for $\zeta,\ \eta\in V$, $\psi\in H$,
  \begin{equation*}
  \begin{split}
       \la
A'_t\zeta,\eta\ra&:=-(a^{\eps ij}\zeta_i,\eta_j)_{m-1}+((b^i-a^{ij}_j)\zeta_i+c\zeta,\eta)_{m-1},\\
B'(t)\eta&:=(\wtil{b}^{i1}\eta_i+\wtil{c}^1\eta,\cdots,\wtil{b}^{id}\eta_i+\wtil{c}^d\eta),\\
\wtil{A}'(t,e)\psi&:=-\psi+\rho(t,e)\psi(\phi^{-1}_{t,e}).
  \end{split}
  \end{equation*}
Using It\^{o}'s formula (see again \cite[Theorem 2, page 156]{GyKr})
and Lemma \ref{lemc1}, we have
\begin{equation*}
  \begin{split}
    &\|u^{\eps}(t)-u^{\eps'}(t)\|^2_{m-1}\\
    \leq~&C\int_0^t\|u^{\eps}(s)-u^{\eps'}(s)\|^2_{m-1}ds-2(\eps-\eps')\int_0^t(u^{\eps'}_i(s),u^{\eps}_i(s)-u^{\eps'}_i(s))_{m-1}ds\\
    &+2\int_0^t(\wtil{b}^{ik}(u^{\eps}_i(s)-u^{\eps'}_i(s))+\wtil{c}^k(u^{\eps}(s)-u^{\eps'}(s)),u^{\eps}(s)-u^{\eps'}(s))_{m-1}dW_s^k\\
    &+2\int_0^t\int_E\big[\|-(u^{\eps}(s-)-u^{\eps'}(s-))+\rho(s,e)(u^{\eps}-u^{\eps'})(s-,\phi^{-1}_{s,e})\|_{m-1}^2\\
    &+2(u^{\eps}(s-)-u^{\eps'}(s-),-(u^{\eps}(s-)-u^{\eps'}(s-))+\rho(s,e)(u^{\eps}-u^{\eps'})(s-,\phi^{-1}_{s,e}))_{m-1}\big]\wtil{N}(deds).\\
  \end{split}
\end{equation*}
From \eqref{cc24}, \eqref{cc26} and BDG inequality, we obtain that
\begin{equation}
  \begin{split}
    &E\sup_{0\leq s\leq
    t}\|u^{\eps}(s)-u^{\eps'}(s)\|^2_{m-1}\\
    \leq&~CE\int_0^t\|u^{\eps}(r)-u^{\eps'}(r)\|_{m-1}^2dr+C|\eps-\eps'|E\int_0^t\|u^{\eps'}(r)\|_m\|u^{\eps}(r)-u^{\eps'}(r)\|_mdr\\
    \leq&~C\int_0^tE\sup_{0\leq s\leq
    r}\|u^{\eps}(s)-u^{\eps'}(s)\|^2_{m-1}dr+CT|\eps-\eps'|\|\varphi\|_m^2.
  \end{split}
\end{equation}
Gronwall's inequality yields
$$E\sup_{0\leq s\leq
T}\|u^{\eps}(s)-u^{\eps'}(s)\|^2_{m-1}\leq
C|\eps-\eps'|\|\varphi\|_m^2$$ which implies there exists a
$\tilde{u}\in \mL^{\infty,2}_{\mF}(0,T;W^{m-1}_2)$ such that
$$u^{\eps}\rightarrow\tilde{u},\ \textrm{strongly}\ \textrm{in}\ \mL^{\infty,2}_{\mF}(0,T;W^{m-1}_2).$$
Since $u^{\eps}\rightharpoonup u$ weakly in
$\mL^2_{\mF}(0,T;W^m_2)$, $u^{\eps}\rightharpoonup u$ weakly in
$\mL^2_{\mF}(0,T;W^{m-1}_2)$. So
$u=\tilde{u}\in\mL^{\infty,2}_{\mF}(0,T;W^{m-1}_2)$. Using similar
arguments to deduce \eqref{c13}, we know that both sides of
\eqref{c19}, corresponding to $u^{\eps}$, converge weakly to the
corresponding expression to $u$ in $\mL^2_{\mF}(0,T;\mR)$. So $u\in
\mL^2_{\mF}(0,T;W^m_2)\cap \mL^{\infty,2}_{\mF}(0,T;W^{m-1}_2)$ is a
generalized solution of equation \eqref{c18}.

Noting $u\in\mL^{\infty,2}_{\mF}(0,T;W^{m-1}_2)$, we can prove
$u\in\mL^{\infty,2}_{\mF,w}(0,T;W^m_2)$ and satisfies \eqref{cc21}
in the same way as \cite[Theorem 3.1, page 341]{KrRo3}.
\end{proof}
In order to deduce the SIPDE for the invert flow $X_\cdot^{-1}(x)$,
we need to introduce weighted Sobolev spaces like in \cite{KrRo3} as
the initial value $X^{-1}_0(x)=x$ does not belong to any Sobolev
space on the whole $\mR^n$.

For $p>1$ and $r\in \mR$, we denote by $L_p(r)$ the space of
real-valued Lebesgue measurable functions on $\mR^n$ with the finite
norm
$$\|f\|_{p,r}:=\bigg(\int_{\mR^n}\Big|(1+|x|^2)^{r/2}f(x)\Big|^pdx\bigg)^{1/p}.$$
For $p=2$ we denote the inner product in $L_2(0)$ by
$(\cdot,\cdot)_0$ as before.

Let $W^m_p(r)$ be the subset of $L_p(r)$ consisting of functions
whose generalized derivatives up to the order $m$ belong to
$L_p(r)$. We introduce a norm in this space by
$$\|f\|_{m,p,r}:=\bigg(\sum_{|\gamma|\leq
m}\frac{|\gamma|!}{\gamma^1!\cdots\gamma^n!}\int_{\mR^n}\Big|(1+|x|^2)^{r/2}D^\gamma
f(x)\Big|^pdx\bigg)^{1/p}$$ where
$\gamma=(\gamma^1,\cdots,\gamma^n)$ is a multi-index. It is a Banach
space and for $p=2$ a Hilbert space.

In the remaining part of this section we assume I), II') and

III')  $\varphi\in W^m_2(r)$.
\begin{defn}
  An $r$-generalized solution of the problem \eqref{c18} is a function
  $ u\in \mL^2_{\mF}(0,T;W^1_2(r))\cap \mL^{\infty,2}_{\mF}(0,T;L_2(r))$ such that for each
  $\eta\in C_0^\infty$ and almost all
  $(t,\omega)\in[0,T]\times\Omega$, equation \eqref{c19} holds.
\end{defn}
\begin{rem}
  If an $r$-generalized solution satisfies that for almost all
  $\omega\in\Omega$, it is cadlag with respect to $t$ for any $x$,
  and twice continuously differentiable with respect to $x$ for any
  $t$, it is a classical solution, i.e., it satisfies the relation
  \eqref{c18} for all $(t,x)\in[0,T]\times\mR^n$, almost surely
  for $\omega\in\Omega$.
\end{rem}
\begin{thm}\label{thmc4}
  Assume that conditions I), II') and III') are in force. Then the
  problem
\eqref{c18} has a unique $r$-generalized solution
\begin{equation}\label{cc23}
  u\in\mL^{\infty,2}_{\mF}(0,T;W^{m-1}_2(r))\cap\mL^2_{\mF}(0,T;W^m_2(r))\cap\mL^{\infty,2}_{\mF,w}(0,T;W^m_2(r)).
\end{equation}
Furthermore,
\begin{equation}\label{c24}
  E\sup_{t\in[0,T]}\|u\|_{m,2,r}\leq
C\|\varphi\|_{m,2,r}
\end{equation} where $C$ depends on $v(E)$, $n$, $d$, $K$, $m$, $r$, and
$T$.
\end{thm}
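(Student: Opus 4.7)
The plan is to reduce Theorem \ref{thmc4} to the unweighted result of Theorem \ref{thmc3} via the multiplicative substitution $v(t,x) := w_r(x) u(t,x)$, where $w_r(x) := (1+|x|^2)^{r/2}$. The starting observation is that for every multi-index $\alpha$ one has $|\pat^\alpha w_{\pm r}(x)| \leq C_{\alpha,r} w_{\pm r}(x)$, and hence by the Leibniz rule the weighted and unweighted Sobolev norms satisfy the equivalence
$$c \|u\|_{m,2,r} \leq \|w_r u\|_m \leq C \|u\|_{m,2,r}, \qquad u \in W^m_2(r),$$
with constants depending only on $m$, $n$, $r$. In particular $\varphi \in W^m_2(r)$ if and only if $w_r\varphi \in W^m_2$, and membership of $u$ in the triple of spaces in \eqref{cc23} is equivalent to membership of $v$ in the corresponding unweighted triple $\mL^{\infty,2}_{\mF}(0,T;W^{m-1}_2)\cap\mL^2_{\mF}(0,T;W^m_2)\cap\mL^{\infty,2}_{\mF,w}(0,T;W^m_2)$.

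Next I would formally compute, using $u = w_{-r} v$ together with $u_i = w_{-r,i} v + w_{-r} v_i$ and the analogous identity for $u_{ij}$, that $v$ satisfies an SIPDE of exactly the form \eqref{c18} with the same leading coefficients $a^{ij}$ and $\wtil{b}^{ik}$, modified lower-order coefficients $\tilde{b}^i, \tilde{c}, \tilde{\wtil{c}}^k$ built out of $b^i,c,\wtil{b}^{ik},\wtil{c}^k$ and derivatives of $w_{-r}$ (each multiplied by $w_r$ to produce bounded factors by the pointwise bound above), and a new jump coefficient
$$\tilde{\rho}(t,e,x) := \rho(t,e,x) \cdot \frac{w_r(x)}{w_r(\phi^{-1}_{t,e}(x))}.$$
Since $\phi^{-1}_{t,e}(x) - x = -g(t,e,\phi^{-1}_{t,e}(x))$ is uniformly bounded by $K$, the ratio $w_r(x)/w_r(\phi^{-1}_{t,e}(x))$ is sandwiched between two positive constants depending only on $K$ and $r$. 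Hence $\tilde{\rho}$ together with its spatial derivatives up to order $m$, and $\tilde{b}^i,\tilde{c},\tilde{\wtil{c}}^k$ with their derivatives, are all bounded, so condition I) holds for the transformed equation. Because the principal part $a^{ij} - \frac{1}{2}\wtil{b}^{ik}\wtil{b}^{jk}$ is untouched, condition II') is preserved, and by the norm equivalence the initial datum $w_r\varphi$ lies in $W^m_2$, so III) holds.

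Theorem \ref{thmc3} then supplies a unique generalized solution $v$ in $\mL^{\infty,2}_{\mF}(0,T;W^{m-1}_2)\cap\mL^2_{\mF}(0,T;W^m_2)\cap\mL^{\infty,2}_{\mF,w}(0,T;W^m_2)$ with $E\sup_{t\in[0,T]}\|v(t)\|_m^2 \leq C\|w_r\varphi\|_m^2$, and the norm equivalence then yields \eqref{cc23} together with \eqref{c24} for $u := w_{-r} v$. To transfer the generalized-solution identity \eqref{c19} back and forth, note that multiplication by $w_r$ is a bijection of $C_0^\infty$ onto itself, so testing the equation for $u$ against $w_r\eta$ is equivalent to testing the equation for $v$ against $\eta$; uniqueness passes through the same bijection. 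The main technical obstacle is the bookkeeping in the second step --- carefully expanding $w_r\cdot(a^{ij} u_{ij} + b^i u_i + cu)$ and $w_r\cdot(\wtil{b}^{ik} u_i + \wtil{c}^k u)$ in terms of $v$ and verifying that every new coefficient arising from derivatives of $w_{-r}$ and from the jump substitution inherits the boundedness and regularity required by I), while the leading coefficients that enter the coercivity structure II') remain intact.
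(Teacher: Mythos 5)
Your proposal is correct and follows essentially the same route as the paper: the authors also reduce to the unweighted case of Theorem \ref{thmc3} by passing to $\wtil{u}(t,x)=(1+|x|^2)^{r/2}u(t,x)$, writing out the transformed coefficients explicitly (including the jump coefficient $\overline{d}(t,e,x)=\bigl(\frac{1+|x|^2}{1+|\phi_{t,e}^{-1}(x)|^2}\bigr)^{r/2}d(t,e,x)$, which is exactly your $\tilde{\rho}$), checking that conditions I), II'), III) hold for the new equation, and then transferring the conclusion back via the weight. Your version merely supplies a bit more detail on why the new coefficients are bounded and on the test-function bijection, which the paper leaves implicit.
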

\begin{proof}
  The uniqueness can be obtained by analogous arguments to those of
  Theorem \ref{thmc3}. Now we prove the existence.

  The validity for the case $r=0$ is proved in Theorem \ref{thmc3}.
  For the general case we consider the equation for
  $\wtil{u}(t,x):=(1+|x|^2)^{r/2}u(t,x)$ where $u$ is a solution of
  the problem \eqref{c18}:
\begin{equation}
  \left\{
  \begin{split}
    d\wtil{u}(t,x)=&[a^{ij}(t,x)\wtil{u}_{ij}(t,x)+\overline{{b}^i}(t,x)\wtil{u}_i(t,x)+\overline{c}(t,x)\wtil{u}(t,x)]dt\\
    &+\int_E(-\wtil{u}(t,x)+\overline{d}(t,e,x)\wtil{u}(t,\phi_{t,e}^{-1}(x)))v(de)dt\\
    &+[\wtil{b}^{ik}(t,x)\wtil{u}_i(t,x)+\overline{\wtil{c}^k}(t,x)\wtil{u}(t,x)]dW^k_t\\
    &+\int_E(-\wtil{u}(t-,x)+\overline{d}(t,e,x)\wtil{u}(t-,\phi_{t,e}^{-1}(x)))\wtil{N}(dedt),\ t\in[0,T],\ x\in\mR^n,\\
    \wtil{u}(0,x)=&(1+|x|^2)^{r/2}\varphi(x)
  \end{split}
  \right.
\end{equation}
where
\begin{equation}
  \begin{split}
    &\overline{b^i}(t,x)=b^i(t,x)-2r\frac{a^{ij}(t,x)x_j}{1+|x|^2},\\
   &\overline{c}(t,x)=r(r+2)\frac{a^{ij}(t,x)x_ix_j}{(1+|x|^2)^2}-r\frac{a^{ii}(t,x)+b^i(t,x)x_i}{1+|x|^2}+c(t,x),\\
    &\overline{d}(t,e,x)=(\frac{1+|x|^2}{1+|\phi_{t,e}^{-1}(x)|^2})^{r/2}d(t,e,x),\\
    &\overline{\wtil{c}^k}(t,x)=\wtil{c}^k(t,x)-r\frac{\wtil{b}^{ik}x_i}{1+|x|^2}.\\
  \end{split}
\end{equation}
The coefficients and initial data of the above equation satisfy the
conditions of the theorem for $r=0$. From the validity for the case
$r=0$ we know
\begin{equation}
\begin{split}\label{cc27}
  \wtil{u}\in\mL^{\infty,2}_{\mF}(0,T;W^{m-1}_2)&\cap\mL^2_{\mF}(0,T;W^m_2)\cap\mL^{\infty,2}_{\mF,w}(0,T;W^m_2),\\
  &E\sup_{t\in[0,T]}\|\wtil{u}\|_m\leq C\|\varphi\|_{m,2,r}.
\end{split}
\end{equation}
By differentiating the expression $(1+|x|^2)^{-r/2}\wtil{u}(t,x)$ we
can go back to the equation \eqref{c18} for $u$. It is a consequence
of \eqref{cc27} that $u$ satisfies \eqref{cc23} and \eqref{c24}. The
proof is complete.
\end{proof}
\begin{cor}\label{corc1}
  If $(m-j)2>n$ for some integer $j\geq2$, it follows from the
  Sobolev theorem on the embedding of $W^m_2$ in $C^j$ that the
  $r$-generalized solution of \eqref{c18} is classical.
\end{cor}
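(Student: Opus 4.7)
The plan is to leverage the extra spatial regularity provided by Theorem \ref{thmc4} together with the classical Sobolev embedding, and then promote the weak (integral) identity \eqref{c19} to the pointwise equation \eqref{c18}. First, since $(m-j)\cdot 2 > n$, the Sobolev embedding gives $W^m_2(K) \hookrightarrow C^j(K)$ continuously on any compact $K \subset \mR^n$. From \eqref{cc23} we know $u \in \mL^{\infty,2}_{\mF,w}(0,T;W^m_2(r))$, so for almost every $\omega$ and every $t \in [0,T]$ the function $x \mapsto u(t,x)$ lies in $W^m_2(r)$ and, via the embedding (combined with the polynomial weight), admits a representative in $C^j(\mR^n)$ with $j \geq 2$. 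In particular, all derivatives appearing in the differential operators of \eqref{c18} can be taken classically in $x$.

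Second, I would start from the generalized identity \eqref{c19}, pick an arbitrary test function $\eta \in C_0^\infty$, and integrate by parts to move the derivatives from $\eta$ onto $u$; this is legitimate because $u(t,\cdot) \in C^j$ with $j\geq 2$ and the spatial coefficients are bounded with bounded derivatives by condition I). The resulting identity is exactly
\begin{equation*}
  \bigl( u(t,\cdot) - \varphi - \text{[pointwise RHS of \eqref{c18} integrated in } s\text{]},\ \eta \bigr)_0 = 0, \qquad \text{a.s.,}
\end{equation*}
for every $\eta \in C_0^\infty$ and a.e.\ $(t,\omega)$. Because $C_0^\infty$ is dense in $L^2$ and the expression inside is a continuous function of $x$ (for fixed $t,\omega$), it must vanish identically in $x$, so \eqref{c18} holds pointwise in $x$ for a.e.\ $(t,\omega)$.

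Finally, I would upgrade "for a.e.\ $t$" to "for all $t$ almost surely." For each fixed $x$, both sides of the integrated form of \eqref{c18} are cadlag in $t$: the left side $u(t,x)$ is cadlag by the $\mL^{\infty,2}_{\mF,w}(0,T;W^m_2(r))$ regularity combined with the continuity of the evaluation $u(t,\cdot) \mapsto u(t,x)$ in $C^j$, and the right side is manifestly cadlag as the sum of Lebesgue, It\^o and Poisson integrals whose integrands are continuous in $x$ via the $C^j$-regularity. Two cadlag processes that coincide for a.e.\ $t$ coincide for all $t$ a.s., which gives the classical relation \eqref{c18} on the full $[0,T]\times\mR^n$ almost surely.

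The only non-routine point is the joint measurability/continuity statement needed to evaluate $u$ pointwise in $x$ and $t$ simultaneously; this is resolved by fixing a continuous-in-$x$ representative chosen via the Sobolev embedding and then using separability of $C_0^\infty$ to make the exceptional null set of $\omega$ uniform in the test function.
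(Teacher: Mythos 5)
Your argument is correct and is exactly the route the paper intends: the corollary is stated without proof precisely because it is the combination of the regularity $u\in\mL^{\infty,2}_{\mF,w}(0,T;W^m_2(r))$ from Theorem \ref{thmc4}, the Sobolev embedding $W^m_2\hookrightarrow C^j$ (valid locally for the weighted space) under $2(m-j)>n$ with $j\geq2$, and the Remark preceding the corollary that a generalized solution which is cadlag in $t$ and $C^2$ in $x$ is classical. Your write-up simply fills in the standard details (integration by parts against test functions, a countable dense family of $\eta$'s to fix the null set, a stochastic Fubini interchange for the $dW$ and $\wtil N$ integrals, and the cadlag upgrade from a.e.\ $t$ to all $t$), all of which go through as you describe.
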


\begin{thm}\label{thmc5}
  Suppose the integer $k$ satisfies $k>2+n/2$. Moreover, assume the coefficients of equation \eqref{b1} satisfy: $b$, $\sigma$ and $g$, as well as their derivatives up to the order $k+1$, are bounded;
  the determinant of the Jacobian matrix $I+\nabla g(t,e,x)$ of the homeomorphic map
  $\phi_{t,e}(x)=x+g(t,e,x)$ is bounded below by a positive constant. Then the solution
$X_t(\cdot):\mR^n\rightarrow\mR^n$ of
  equation \eqref{b1} starting at time $0$ is a $C^k$
  diffeomorphism for each $t\in[0,T]$ almost surely. Furthermore,
  the $i$-th
  coordinate of the inverse flow $X_\cdot^{-1}(\cdot)$ is the classical
  solution of the equation \eqref{cc1}.
\end{thm}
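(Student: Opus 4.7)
The first claim---that $X_t(\cdot)$ is a $C^k$-diffeomorphism of $\mR^n$ for each $t$ almost surely---is immediate from Lemma \ref{lemb2}: the hypotheses provide the linear growth, bounded derivatives up to order $k+1$, homeomorphism and Jacobian-invertibility conditions $\mathbf{(C1)}$, $\mathbf{(C2)_{k+1}}$, $\mathbf{(C3)}$ and $\mathbf{(C4)}$.

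For the second claim, my strategy is first to construct a classical solution $u$ of \eqref{cc1} via the weighted-Sobolev theory developed in Section 3.3, and then to identify $u(t,x)$ with the $i$-th coordinate of $X_t^{-1}(x)$ by applying the It\^{o}-Wentzell formula to $u(t,X_t(x_0))$. The SIPDE \eqref{cc1} fits the template \eqref{c18} with $a^{ij}=\tfrac12\sigma^{ir}\sigma^{jr}$, $\wtil b^{ik}=-\sigma^{ik}$, $\wtil c^k=c=0$, $\rho\equiv 1$, and a first-order drift obtained by expanding $\mM^r\mM^r-\mL$; in particular, condition II${}'$ holds as an equality since $a^{ij}-\tfrac12\wtil b^{ik}\wtil b^{jk}\equiv 0$. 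The initial datum $\varphi(x)=x^i$ is not in any unweighted Sobolev space, so I choose a weight exponent $r<-(n+2)/2$, which places $\varphi$ in $W^k_2(r)$. Theorem \ref{thmc4} with $m=k$ then yields a unique $r$-generalized solution $u$ in the spaces \eqref{cc23}; since $k>2+n/2$ is exactly the Sobolev-embedding threshold of Corollary \ref{corc1}, that corollary upgrades $u$ to a classical solution, cadlag in $t$ and at least $C^2$ in $x$.

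Next I apply Lemma \ref{lemcc1} to $F(t,x):=u(t,x)$ along $X_t=X_t(x_0)$ for a fixed $x_0$, reading off
\[
G=(\mM^r\mM^r-\mL)u+\int_E\mA(s,e)u\,v(de),\qquad H^r=-\mM^r u,\qquad J(s,e,\cdot)=\mA(s,e)u.
\]
Assumptions $\mathbf{(A1)}$ and $\mathbf{(A2)}$ follow from the regularity of $u$ and the uniform bounds on $b,\sigma,g$. The heart of the argument is to verify that every group of terms in \eqref{c5} cancels: $(i)$ the $dW^r_s$ integrands combine to $-\sigma^{ir}u_i+\sigma^{ir}u_i=0$; $(ii)$ using $\phi_{s,e}^{-1}(X_{s-}+g(s,e,X_{s-}))=X_{s-}$, the $\wtil N$-integrand $u(s-,X_{s-}+g)-u(s-,X_{s-})+J(s,e,X_{s-}+g)$ collapses to zero; $(iii)$ in the $ds$-integrand, the second-order Brownian correction $\tfrac12\la\la\pat^2u,\sigma\sigma^*\ra\ra$ together with the cross term $\la\la\pat H,\sigma\ra\ra$ exactly cancels $(\mM^r\mM^r-\mL)u+\la\pat u,b\ra$ up to a residue $\int_E g^i v(de)u_i$, which is in turn annihilated by the sum of the three jump-type $ds$-terms $\int_E\mA u\,v(de)$, $\int_E[J(s,e,X_{s-}+g)-J(s,e,X_{s-})]v(de)$, and $\int_E[u(s-,X_{s-}+g)-u(s-,X_{s-})-\la\pat u,g\ra]v(de)$.

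The principal obstacle is the bookkeeping in step $(iii)$: one must carefully track how the particular form of $\mM^r\mM^r-\mL$ and $\mA(t,e)$ placed on the right-hand side of \eqref{cc1} is precisely the one that compensates the It\^{o}-Wentzell corrections produced by the Brownian and Poissonian parts of \eqref{b1}. Once these cancellations are established, the It\^{o}-Wentzell formula reduces to $u(t,X_t(x_0))=u(0,X_0(x_0))=x_0^i$ for all $t\in[0,T]$ a.s. Replacing $x_0$ by $X_t^{-1}(x)$---a valid substitution thanks to the diffeomorphism property from the first step---gives $u(t,x)=X_t^{-1}(x)^i$, so $X_t^{-1}(\cdot)^i$ is the (unique) classical solution of \eqref{cc1}.
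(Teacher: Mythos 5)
Your proposal is correct and follows essentially the same route as the paper: Lemma \ref{lemb2} for the diffeomorphism property, then Theorem \ref{thmc4} (degenerate, weighted case, with $\varphi(x)=x^i\in W^m_2(r)$ for $r<-(n/2+1)$) plus Corollary \ref{corc1} to produce the unique classical solution $u$ of \eqref{cc1}, and finally the It\^{o}-Wentzell formula applied to $u(t,X_t(x))$ to conclude $du(t,X_t(x))\equiv0$ and hence $u(t,X_t(x))=x^i$. The only difference is that you spell out the term-by-term cancellations (which check out, including the residue $\int_E g^iu_i\,v(de)$ being absorbed by the three jump-type $ds$-terms) that the paper states without computation.
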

\begin{proof}
  \ref{lemb2} implies the first assertion of the theorem. Since $\varphi(x)\equiv x^i\in
  W^m_2(r)$ for any $r\leq-(n/2+1)$, we deduce from Theorem
  \ref{thmc4} and Corollary \ref{corc1}, using Sobolev's embedding theorem that the equation \eqref{cc1}
  has a unique classical solution $u(t,x)$ which satisfies for any
  compact subset $K\subset\mR^n$,
  \begin{equation}\label{cc28}
    E\sup_{0\leq t\leq T}\|u(t,x)\|^2_{C^2(K)}\leq
  C_K\|\varphi\|^2_{m,2,r}.
  \end{equation}
  We apply It\^{o}-Wentzell formula to the expression
  $u(t,X_t(x))$ and obtain that $du(t,X_t(x))\equiv0$. Hence
  $u(t,X_t(x))=x^i$ for any $t\in[0,T]$, a.s.. The proof is
  complete.
\end{proof}
\begin{rem}(see \cite[Remark 2, page359]{Ku1})
 Assume the coefficients $b$, $\sigma$, and $g$ are deterministic
 functions and satisfy the conditions in Theorem \ref{thmc5}. Assume
 further that the diffusion coefficient $\sigma$ is $C^{1,2}$ with
 respect to $(t,x)$. Then the inverse flow $\{X^{-1}_{s,t}(x):=(X^s_t)^{-1}(x),\ 0\leq s\leq t\leq T,\ x\in\mathbb{R}^n\}$
 satisfies the following backward stochastic ordinary differential
 equation
 \begin{equation*}
 \begin{split}
 X^{-1}_{s,t}(x)=&x-\int_s^tb(r,X^{-1}_{r,t}(x))dr-\int_s^t\sigma(r,X^{-1}_{r,t}(x))d\overleftarrow{W_r}-\int_s^t\int_Eg(r,e,\phi_{r,e}^{-1}(X^{-1}_{r,t}(x)))\wtil{N}(ded\overleftarrow{r})\\
 &+2\int_s^tc(r,X^{-1}_{r,t}(x))dr+\int_s^t\int_E\big(g(r,e,X^{-1}_{r,t}(x))-g(r,e,\phi_{r,e}^{-1}(X^{-1}_{r,t}(x)))\big)v(de)dr
 \end{split}
 \end{equation*}
 where $$c(t,x):=\frac{1}{2}\sum_{ij}\frac{\pat\sigma^{,j}(t,x)}{\pat
 x^i}\sigma^{ij}(t,x)$$ and the superscript ``,j'' stands for the
 $j$-th column of the underlying matrix; $d\overleftarrow{W_t}$ and
 $\wtil{N}(ded\overleftarrow{r})$ represent the backward It\^{o}
 integral and backward Poisson integral (see \cite{Ku1} for more
 details).
 \end{rem}
\section{BSDEs with jumps}
In this section, let $p\geq2$ is a given constant. We consider the
following backward stochastic differential equation
\begin{equation}\label{d1}
  Y_t=\xi+\int_t^Tf(s,Y_s,Z_s,U_s)ds-\int_t^TZ_sdW_s-\int_t^T\int_EU_s(e)d\wtil{N}(deds),\
  t\in[0,T]
\end{equation}
under the following conditions

$\mathbf{(H1)}$ the generator
$f:[0,T]\times\Omega\times\mR^l\times\mR^{l\times d}\times
L^2(\mE;\mR^l)\rightarrow\mR^l$ is
$\mP\otimes\mB(\mR^l)\otimes\mB(\mR^{l\times
d})\otimes\mB(L^2(\mE;\mR^l))$ measurable and
$f(\cdot,0,0,0)\in\mL^{2,p}_{\mF}(0,T;\mR^l)$.

$\mathbf{(H2)}$ there exists a constant $L>0$ such that
$$|f(t,y,z,u)-f(t,y',z',u')|\leq L(|y-y'|+|z-z'|+|u-u'|),$$
for all $t\in[0,T]$, $y,y'\in\mR^l$, $z,z'\in\mR^{l\times d}$,
$u,u'\in L^2(\mE;\mR^l)$.

$\mathbf{(H3)}$ $\xi\in L^p(\mF_T;\mR^l)$.
\begin{thm}\label{dthm1}
  Under conditions $\mathbf{(H1)}$, $\mathbf{(H2)}$ and $\mathbf{(H3)}$, BSDE \eqref{d1} has a unique
  solution
  $$(Y,Z,U)\in\mL^{\infty,p}_\mF(0,T;\mR^l)\times\mL^{2,p}_\mF(0,T;\mR^{l\times d})\times\mL^{2,p}_\mF(0,T;L^2(\mE;\mR^l)).$$ Moreover, the solution $(Y,Z,U)$
  satisfies
  \begin{equation}
  \begin{split}\label{d2}
    &\|Y\|_{\mL^{\infty,p}_\mF(0,T;\mR^l)}+\|Z\|_{\mL^{2,p}_\mF(0,T;\mR^{l\times d})}+\|U\|_{\mL^{2,p}_\mF(0,T;L^2(\mE;\mR^l))}\\
    \leq&C(\|f(\cdot,0,0,0)\|_{\mL^{2,p}_{\mF}(0,T;\mR^l)}+\|\xi\|_{L^p})
  \end{split}
  \end{equation}
  for some positive constant $C$ which depends on $T$, $v(E)$, and
  $p$.
\end{thm}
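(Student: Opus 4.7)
The plan is to follow the standard Picard / contraction-mapping route adapted to the jump setting, with the $L^p$ ($p\geq 2$) a priori estimate as the crucial analytical ingredient. I will work in the Banach space
\begin{equation*}
\mB_p:=\mL^{\infty,p}_\mF(0,T;\mR^l)\times\mL^{2,p}_\mF(0,T;\mR^{l\times d})\times\mL^{2,p}_\mF(0,T;L^2(\mE;\mR^l))
\end{equation*}
endowed, when convenient, with a weight $e^{\beta t}$ tuned later to absorb the Lipschitz constant $L$.

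First I would treat the inhomogeneous \emph{linear} case, where the driver $f(s)$ does not depend on $(Y,Z,U)$. Setting $M_t:=E[\xi+\int_0^Tf(s)\,ds\mid\mF_t]$, the martingale representation theorem for the filtration generated by $W$ and the Poisson measure (valid under the stated independence) produces predictable $(Z,U)$ with $M_t=M_0+\int_0^tZ_s\,dW_s+\int_0^t\int_EU_s(e)\wtil{N}(de\,ds)$, and $Y_t:=M_t-\int_0^tf(s)\,ds$ is the required solution. This immediately yields existence in the linear case.

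Next I would establish the quantitative a priori estimate \eqref{d2}, which is the real obstacle. Apply It\^{o}'s formula to $|Y_t|^p$ (using the jump version with the second order correction $|Y_{s-}+U_s(e)|^p-|Y_{s-}|^p-p|Y_{s-}|^{p-2}\la Y_{s-},U_s(e)\ra$); the drift term produces $p|Y_s|^{p-2}\la Y_s,f(s,Y_s,Z_s,U_s)\ra$, which by hypothesis $\mathbf{(H2)}$ and Young's inequality is controlled by $C|Y_s|^p+C|f(s,0,0,0)|^p+\eps(|Z_s|^p+\|U_s\|^p)$, while the quadratic variation produces positive lower order terms $\tfrac{p(p-1)}{2}|Y_s|^{p-2}(|Z_s|^2+\int_E|U_s(e)|^2v(de))$. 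The delicate point is that $\int|Y_s|^{p-2}|Z_s|^2ds$ is not directly an $\mL^{2,p}$ norm of $Z$; I would handle this by a Young-type split $|Y_s|^{p-2}|Z_s|^2\leq\eps(\sup_s|Y_s|^p)^{(p-2)/p}\cdot\textrm{const}+C(\int_0^T|Z_s|^2ds)^{p/2}$, then use the supremum-side absorption (and the analogous move for the $U$ integral). The stochastic integral terms are controlled by the Burkholder--Davis--Gundy inequality applied to $\int_0^\cdot p|Y_{s-}|^{p-2}\la Y_{s-},Z_s\,dW_s\ra$ and the corresponding Poisson integral, producing $\tfrac12 E\sup_t|Y_t|^p$ on the left after a Young split. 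Gronwall's inequality then closes the bound and gives \eqref{d2}.

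Finally, I would pass to general Lipschitz $f$ by contraction. Given $(Y',Z',U')\in\mB_p$, solve the linear BSDE driven by $f(s,Y'_s,Z'_s,U'_s)$ to define the map $\Phi(Y',Z',U')=(Y,Z,U)$; condition $\mathbf{(H2)}$ together with $\mathbf{(H1)}$ ensures $f(\cdot,Y'_\cdot,Z'_\cdot,U'_\cdot)\in\mL^{2,p}_\mF(0,T;\mR^l)$, so $\Phi$ maps $\mB_p$ into itself. Applying the same a priori estimate to the difference of two images, with the weight $e^{\beta t}$ chosen large (or, equivalently, working first on a short interval $[T-\delta,T]$ and then iterating), reduces the Lipschitz constant below one and produces a strict contraction; the unique fixed point of $\Phi$ is the desired solution, and uniqueness of $(Y,Z,U)$ follows from the same a priori estimate applied to the difference of any two solutions. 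The $L^p$ bound \eqref{d2} is then just the a priori estimate evaluated at the fixed point.
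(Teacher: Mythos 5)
Your proposal is correct in substance, but it reaches the key estimate by a genuinely different route from the paper, and it is more self-contained. The paper does not run the Picard scheme itself: it simply invokes the arguments of El Karoui--Peng--Quenez for existence and uniqueness and devotes the proof entirely to the bound \eqref{d2}. For the $\sup_t|Y_t|$ part of that bound the paper stays at the level of $|Y_t|^2$: it applies It\^{o}'s formula to $|Y_t|^2$, takes conditional expectations given $\mF_r$, runs a backward Gronwall argument to get $|Y_r|^2\leq e^{CT}E[|\xi|^2+\int_0^T|f(s,0,0,0)|^2ds\mid\mF_r]$, and then applies Doob's maximal inequality in $L^{p/2}$ to the conditional-expectation martingale. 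You instead apply It\^{o}'s formula directly to $|Y_t|^p$ and close with BDG; this is the Briand et al.\ route (which the paper itself cites as the source of the ``standard techniques''), and it has the mild advantage of not leaning on Doob's inequality with exponent $p/2$, which is delicate exactly at $p=2$. For the $Z$ and $U$ components the two arguments essentially coincide (It\^{o} on $|Y|^2$, raise to the power $p/2$, BDG on $\int\la Y,Z\,dW\ra$ and on the Poisson integral); the paper additionally makes explicit the comparison of $\int_0^T\int_E|U_s(e)|^2v(de)ds$ with $\int_0^T\int_E|U_s(e)|^2N(deds)$ via the elementary inequality $\sum_i a_i^{p/2}\leq(\sum_i a_i)^{p/2}$, a step you subsume under ``the analogous move for the $U$ integral'' but would need to spell out. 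One local imprecision to fix when writing this up: the bound $p|Y_s|^{p-2}\la Y_s,f(s,Y_s,Z_s,U_s)\ra\leq C|Y_s|^p+C|f(s,0,0,0)|^p+\eps(|Z_s|^p+\|U_s\|^p)$ is not the usable form, since $\int_0^T|Z_s|^p\,ds$ is not dominated by $(\int_0^T|Z_s|^2ds)^{p/2}$; the correct Young split is $p L|Y_s|^{p-1}|Z_s|\leq C|Y_s|^p+\tfrac{p(p-1)}{4}|Y_s|^{p-2}|Z_s|^2$, with the second term absorbed by the positive quadratic-variation term you already identified. With that correction, and granting the (implicit, also in the paper) martingale representation property of the filtration generated by $W$ and $N$, your argument goes through.
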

\begin{proof}
  The arguments in \cite[Theorem 5.1 pages 54-56]{KPQ} can be adopted to get the desired existence and uniqueness of solution to equation \eqref{d1}. It remains to prove the estimate
  \eqref{d2}. The techniques are more or less standard now (see \cite{Bri} and \cite{P2}).
  Using It\^{o}'s formula and the Lipschitz continuity of $f$, we get
  \begin{equation}\label{d3}
    \begin{split}
      &|Y_t|^2+\int_t^T|Z_s|^2ds+\int_t^T\int_E|U_s(e)|^2v(de)ds\\
      \leq&|\xi|^2+(2L+\frac{L^2}{\eps_1}+\frac{L^2}{\eps_2}+1)\int_t^T|Y_s|^2ds+\eps_1\int_t^T|Z_s|^2ds\\
      &+\eps_2\int_t^T\int_E|U_s(e)|^2v(de)ds+\int_t^T|f(s,0,0,0)|^2ds-2\int_t^T\la
      Y_s,Z_sdW_s\ra\\
      &-\int_t^T\int_E(2\la
      Y_{s-},U_s(e)\ra+|U_s(e)|^2)\wtil{N}(deds).\\
    \end{split}
  \end{equation}
  For any $0\leq r\leq t$, letting $\eps_1=\eps_2=1$ and taking conditional expectation with
respect to $\mF_r$ on both sides of \eqref{d3}, we have
\begin{equation}\label{d4}
  E[|Y_t|^2\big|\mF_r]\leq
E\big[|\xi|^2+\int_0^T|f(s,0,0,0)|^2ds|\mF_r\big]+C\int_t^TE[|Y_s|^2\big|\mF_r]ds,\
\forall t\in[r,T].
\end{equation}
Using Gronwall's inequality, we have
\begin{equation*}
  E[|Y_t|^2\big|\mF_r]\leq
E\big[|\xi|^2+\int_0^T|f(s,0,0,0)|^2ds\big|\mF_r\big]e^{C(T-t)}.
\end{equation*}
In particular,taking $t=r$, we have
\begin{equation*}
  |Y_r|^2\leq E\big[|\xi|^2+\int_0^T|f(s,0,0,0)|^2ds\big|\mF_r\big]e^{CT}.
\end{equation*}
Using Doob's inequality, we have
\begin{equation}
\begin{split}\label{d5}
  E\Big(\sup_{0\leq r\leq T}|Y_r|^2\Big)^{p/2}&\leq E\bigg(\sup_{0\leq r\leq
T}\Big(E\big[|\xi|^2+\int_0^T|f(s,0,0,0)|^2ds\big|\mF_r\big]e^{CT}\Big)^{p/2}\bigg)\\
&\leq
CE\bigg(|\xi|^p+\Big(\int_0^T|f(s,0,0,0)|^2ds\Big)^{p/2}\bigg).
\end{split}
\end{equation}
Taking $\eps_1=\frac{1}{2}$ and $t=0$ in \eqref{d3}, we have
\begin{equation}
  \begin{split}\label{d6}
   &E\Big(\int_0^T|Z_s|^2ds\Big)^{p/2}+E\bigg(\int_0^T\int_E|U_s(e)|^2N(deds)\bigg)^{p/2}\\
  \leq &C\bigg\{E|\xi|^p+E\sup_{0\leq t\leq
T}|Y_t|^p+\eps_2^{p/2}E\bigg(\int_0^T\int_E|U_s(e)|^2v(de)ds\bigg)^{p/2}\\
&+E\bigg(\int_0^T|f(s,0,0,0)|^2ds\bigg)^{p/2}+E\Big|\int_0^T\la
      Y_s,Z_sdW_s\ra\Big|^{p/2}\\
      &+E\Big|\int_0^T\int_E\la
      Y_{s-},U_s(e)\ra\wtil{N}(deds)\Big|^{p/2}\bigg\}.
  \end{split}
\end{equation}
Using BDG inequality, we have
\begin{equation}
  \begin{split}\label{d7}
  E\Big|\int_0^T\la Y_s,Z_sdW_s\ra\Big|^{p/2}&\leq
CE\Big(\int_0^T|Y_s|^2|Z_s|^2ds\Big)^{p/4}\\
&\leq CE\bigg(\sup_{0\leq t\leq T}|Y_t|^{p/2}\Big(\int_0^T|Z_s|^2ds\Big)^{p/4}\bigg)\\
&\leq\frac{C^2}{2\eps}E\sup_{0\leq t\leq
T}|Y_t|^p+\frac{\eps}{2}E\Big(\int_0^T|Z_s|^2ds\Big)^{p/2}
  \end{split}
\end{equation}
and
\begin{equation}
  \begin{split}\label{d8}
  &E\Big|\int_0^T\int_E\la
      Y_{s-},U_s(e)\ra\wtil{N}(deds)\Big|^{p/2}\\
      \leq~&
CE\Big(\int_0^T\int_E|Y_{s-}|^2|U_s(e)|^2N(deds)\Big)^{p/4}\\
=~&CE\Big(\sum_{0<s\leq T}|Y_{s-}|^2|U_s(\mathbbm{p}(s))|^2\Big)^{p/4}\\
\leq~&CE\bigg(\sup_{0\leq s\leq
T}|Y_s|^{p/2}\Big(\int_0^T\int_E|U_s(e)|^2N(deds)\Big)^{p/4}\bigg)\\
\leq~& \frac{C^2}{2\eps}E\sup_{0\leq s\leq
T}|Y_s|^p+\frac{\eps}{2}E\Big(\int_0^T\int_E|U_s(e)|^2N(deds)\Big)^{p/2}.
\end{split}
\end{equation}
Moreover,
\begin{equation}
  \begin{split}\label{d9}
  &E\bigg(\int_0^T\int_E|U_s(e)|^2v(de)ds\bigg)^{p/2}\\
  \leq&~(v(E)T)^{p/2-1}E\int_0^T\int_E|U_s(e)|^pv(de)ds\\
  \leq&~(v(E)T)^{p/2-1}E\int_0^T\int_E|U_s(e)|^pN(deds)\\
  \leq&~(v(E)T)^{p/2-1}E\sum_{0<s\leq T}|U_s(\mathbbm{p}(s))|^p\\
  =&~(v(E)T)^{p/2-1}E\sum_{0<s\leq T}\big(|U_s(\mathbbm{p}(s))|^2\big)^{p/2}\\
  \leq&~(v(E)T)^{p/2-1}E\Big(\sum_{0<s\leq T}|U_s(\mathbbm{p}(s))|^2\Big)^{p/2}\\
  =&~(v(E)T)^{p/2-1}E\bigg(\int_0^T\int_E|U_s(e)|^2N(deds)\bigg)^{p/2}.
\end{split}
\end{equation}
Taking $\eps_2$ in \eqref{d6} and $\eps$ in \eqref{d7} and
\eqref{d8} small enough, we can deduce from \eqref{d6} that
\begin{equation*}
  \begin{split}
     &E\bigg(\int_0^T|Z_s|^2ds\bigg)^{p/2}+E\bigg(\int_0^T\int_E|U_s(e)|^2N(deds)\bigg)^{p/2}\\
  \leq &C\bigg\{E|\xi|^p+E\sup_{0\leq t\leq
T}|Y_t|^p+E\bigg(\int_0^T|f(s,0,0,0)|^2ds\bigg)^{p/2}\bigg\}.
  \end{split}
\end{equation*}
Combining the estimates \eqref{d5} and \eqref{d9}, we can get the
desired result.
\end{proof}
Now we consider the following BSDE
\begin{equation}\label{d10}
 \left\{
 \begin{array}{ccccc}
 \begin{split}
 dY_t&=-f(t,X_t(x),Y_t,Z_t,U_t)dt+Z_tdW_t+\int_EU_t(e)\wtil{N}(dedt),\
 t\in[0,T],\\
 Y_T&=\varphi(X_T(x))
 \end{split}
\end{array}
\right.
\end{equation} where $X$ is the solution of equation
\eqref{b1} starting at time 0 and coefficients $f$ and $\varphi$
satisfy the following conditions
\begin{equation*}
  \begin{split}
  \mathbf{(C5)_k}
\ \ \ \ \ \ \ \ \ \ \ \ \ \ \
&f\in\mL_\mF^\infty(0,T;C^k(\mR^n\times\mR^l\times\mR^{l\times
d}\times L^2(\mE;\mR^l);\mR^l)),\\
&\varphi\in L^\infty(\mF_T;C^k(\mR^n;\mR^l)).
\end{split}
\end{equation*}
In what follows the derivatives of $f$ with respect to $u$ are
Fr\`{e}chet derivatives.

The adapted solution to BSDE \eqref{d10} will be denoted by
$(Y(x),Z(x),U(x))$. We have the following lemma.
\begin{lem}(\cite[Proposition 3.3]{BuPa})\label{dlem1}
  Assume $\mathbf{(C1)}$, $\mathbf{(C2)_k}$ and $\mathbf{(C5)_k}$ are satisfied for $k=1$ or $k=2$.
  Then the unique adapted solution $(Y(x),Z(x),U(x))$ to BSDE
  \eqref{d10} satisfies almost surely
  \begin{equation*}
  \begin{split}
  Y&\in
C^{k-1}(\mR^n;D(0,T;\mR^l)),\\
Z&\in C^{k-1}(\mR^n;L^2(0,T;\mR^{l\times d})),\\
U&\in C^{k-1}(\mR^n;L^2(0,T;L^2(\mE;\mR^l))).
\end{split}
\end{equation*}
And for any
$p\geq2$ and any multi-index $\gamma$ with $|\gamma|\leq k-1$, we
have
\begin{equation*}\label{d11}
  \begin{split}
    \|\pat^\gamma
    Y_\cdot(x)\|^p_{\mL^{\infty,p}_\mF(0,T;\mR^l)}+\|\pat^\gamma
    Z_\cdot(x)\|^p_{\mL^{2,p}_\mF(0,T;\mR^{l\times d})}
    +\|\pat^\gamma U_\cdot(x)\|^p_{\mL^{2,p}_\mF(0,T;L^2(\mE;\mR^l))}
    \leq
    C_p,\ \ \forall x\in\mR^n,
  \end{split}
\end{equation*} and
\begin{equation*}\label{d12}
  \begin{split}
    &\|\pat^\gamma Y_\cdot(x_1)-\pat^\gamma
    Y_\cdot(x_2)\|^p_{\mL^{\infty,p}_\mF(0,T;\mR^l)}+\|\pat^\gamma
    Z_\cdot(x_1)-\pat^\gamma
    Z_\cdot(x_2)\|^p_{\mL^{2,p}_\mF(0,T;\mR^{l\times d})}\\
    &+\|\pat^\gamma U_\cdot(x_1)-\pat^\gamma
    U_\cdot(x_2)\|^p_{\mL^{2,p}_\mF(0,T;L^2(E;\mR^l))}\\
    \leq&
    C_p|x_1-x_2|^p,\ \ \forall x_1,x_2\in\mR^n.
  \end{split}
\end{equation*}
Moreover, the gradient $(\pat Y,\pat Z,\pat U)$ satisfies the
following BSDE
\begin{equation}
\begin{split}
  \pat Y_t(y)=&\pat\varphi(X_T(y))\pat
  X_T(y)+\int_t^T[f_x(\Xi^y_s)\pat X_s(y)+f_y(\Xi^y_s)\pat
  Y_s(y)+f_z(\Xi^y_s)\pat Z_s(y)\\
  &+f_u(\Xi^y_s)\pat
  U_s(y)]ds-\int_t^T\pat Z_s(y)dW_s-\int_t^T\int_E\pat
  U_s(e,x)\wtil{N}(deds)
\end{split}
\end{equation}
where $\Xi^y_s:=(s,X_s(y),Y_s(y),Z_s(y),U_s(y))$ and $\pat
Z_t(y)dW_t:=\pat Z_t^{~,r}(y)dW_t^r$.
\end{lem}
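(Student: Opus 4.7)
The plan is to reduce the statement to repeated applications of the a~priori estimate in Theorem~\ref{dthm1}, exploiting the smoothness of the forward flow already established in Lemmas~\ref{lemb1}--\ref{lemb2}. For every fixed $x\in\mR^n$, assumption $\mathbf{(C5)_k}$ together with the boundedness of $\varphi(X_T(x))$ and the integrability of the driver ensure that hypotheses $\mathbf{(H1)}$--$\mathbf{(H3)}$ hold, so that the triple $(Y(x),Z(x),U(x))$ exists and satisfies the moment estimate \eqref{d2} with constants uniform in $x$.

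\textbf{Lipschitz regularity in $x$ (case $k=1$).} For two points $x_1,x_2$, let $\Delta Y:=Y(x_1)-Y(x_2)$ and similarly $\Delta Z,\Delta U$. Splitting
\[
f(s,X_s(x_1),Y_s(x_1),Z_s(x_1),U_s(x_1))-f(s,X_s(x_2),Y_s(x_2),Z_s(x_2),U_s(x_2))
\]
into a Lipschitz function of $(\Delta Y,\Delta Z,\Delta U)$ (by $\mathbf{(C5)_1}$) plus the inhomogeneous remainder $f(s,X_s(x_1),\cdot)-f(s,X_s(x_2),\cdot)$ evaluated at $(Y_s(x_2),Z_s(x_2),U_s(x_2))$, the triple $(\Delta Y,\Delta Z,\Delta U)$ solves a Lipschitz BSDE with terminal $\varphi(X_T(x_1))-\varphi(X_T(x_2))$. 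Applying Theorem~\ref{dthm1} together with the standard flow estimate $E\sup_{t\le T}|X_t(x_1)-X_t(x_2)|^p\le C|x_1-x_2|^p$ (available from $\mathbf{(C1)}$ and $\mathbf{(C2)_1}$ via Lemma~\ref{lemb1}) yields the Lipschitz bound for $|\gamma|=0$, from which the continuity conclusion for $k=1$ follows.

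\textbf{Differentiability (case $k=2$).} Define $(\wtil Y(x),\wtil Z(x),\wtil U(x))$ as the unique solution (via Theorem~\ref{dthm1}) of the linear BSDE displayed in the lemma: its coefficients $f_y,f_z,f_u$ are bounded by $\mathbf{(C5)_2}$, and its free data $f_x(\Xi^x_s)\pat X_s(x)$ and $\pat\varphi(X_T(x))\pat X_T(x)$ have the required $p$-moments thanks to Lemma~\ref{lemb2} under $\mathbf{(C2)_2}$. To identify $\wtil Y$ with the genuine derivative, form the quotient $R^h:=h^{-1}[Y(x+he_i)-Y(x)]-\wtil Y_{,i}(x)$ and the analogues $R^{h,Z},R^{h,U}$. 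A Taylor expansion of the driver produces a linear Lipschitz BSDE for $(R^h,R^{h,Z},R^{h,U})$ whose inhomogeneity is built from the averages $\int_0^1[f_\alpha(\Xi^{x+\theta he_i}_s)-f_\alpha(\Xi^x_s)]\,d\theta$ for $\alpha\in\{x,y,z,u\}$, acted on by the already-bounded processes $\pat X,\wtil Y,\wtil Z,\wtil U$, plus the analogous terminal increment. Step~1 (Lipschitz continuity of the solution in $x$) together with dominated convergence forces these inhomogeneities to zero in $\mL^{2,p}_\mF$ as $h\to0$, and Theorem~\ref{dthm1} delivers $R^h\to 0$ in the norms claimed. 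This proves $\wtil Y=\pat Y$, and the pointwise and Lipschitz estimates for $|\gamma|=1$ then follow by applying Theorem~\ref{dthm1}, respectively Step~1, to the gradient BSDE itself, using $\mathbf{(C5)_2}$ and Lemma~\ref{lemb2}.

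\textbf{Main obstacle.} The delicate point is the $U$-component, which takes values in the infinite-dimensional space $L^2(\mE;\mR^l)$: the Fr\'echet derivative $f_u(\Xi^x_s)\in\mathscr{L}(L^2(\mE;\mR^l),\mR^l)$ must be treated so that the chain rule and dominated convergence apply in this Banach-space-valued setting. Concretely, one needs continuity of $s\mapsto f_u(\Xi^x_s)$ in operator norm along the trajectory and uniform integrability of $U_\cdot(x+he_i)$ as $h\to0$, both of which are secured by $\mathbf{(C5)_k}$ and the $L^p$-estimate of Theorem~\ref{dthm1}. Beyond this, the jump contributions to the BSDE for $(R^h,R^{h,Z},R^{h,U})$ are handled by BDG exactly as in the proof of Theorem~\ref{dthm1}, provided the integrability $\int_E L(e)^p v(de)<\infty$ from $\mathbf{(C2)_k}$ is carried along throughout the computation.
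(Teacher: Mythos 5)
First, a structural remark: the paper does not prove this lemma at all --- it is imported verbatim from Buckdahn and Pardoux \cite[Proposition 3.3]{BuPa}, so there is no internal proof to compare against. Your sketch follows the standard route for parameter dependence of BSDEs (a difference-of-solutions BSDE for Lipschitz continuity in $x$, then a variational BSDE plus a remainder estimate to identify the derivative), which is also the strategy of the cited source, and the overall architecture is sound.

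Three points in your reduction are not literal and should be repaired. (i) Your $L^p$ bounds give Lipschitz continuity only in the $L^p$ sense; to reach the almost-sure conclusion $Y\in C^{k-1}(\mR^n;D(0,T;\mR^l))$ (and its analogues for $Z$ and $U$) you must apply a Banach-space-valued Kolmogorov continuity criterion with $p>n$ --- precisely the role of Lemma \ref{dlem2} (Sznitman), which the paper invokes explicitly in the proof of Theorem \ref{dthm2}; your sketch omits this modification step entirely. (ii) You cite Lemma \ref{lemb2} for the moments of $\pat X$, but that lemma assumes $\mathbf{(C2)_{k+1}}$, $\mathbf{(C3)}$ and $\mathbf{(C4)}$, none of which are hypotheses here; what is actually needed is the $L^p$-differentiability of the flow and the moment bounds coming from the variational SDE, which hold under $\mathbf{(C1)}$ and $\mathbf{(C2)_k}$ alone (Fujiwara--Kunita), so the citation is a hypothesis mismatch rather than a logical gap. (iii) In the Lipschitz estimate for $|\gamma|=1$, the inhomogeneity of the BSDE for $\pat Y(x_1)-\pat Y(x_2)$ contains products such as $[f_z(\Xi^{x_1}_s)-f_z(\Xi^{x_2}_s)]\,\pat Z_s(x_1)$, which one controls via Cauchy--Schwarz and higher moments only in the norm $E\big(\int_0^T|\cdot|\,ds\big)^p$, i.e.\ in $\mL^{1,p}_\mF$, whereas Theorem \ref{dthm1} states its estimate \eqref{d2} with the free term of the driver measured in $\mL^{2,p}_\mF$; you therefore need the standard $\mL^{1,p}_\mF$ variant of that a priori estimate before the final application goes through. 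None of these is fatal, but as written the proof is a correct skeleton rather than a complete argument.
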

To get higher regularity of $(Y(x),Z(x),U(x))$ with respect to $x$,
we introduce the following assumption

$\mathbf{(C6)}$ the function $f(t,x,y,z,u)$ is linear in $z$ and $u$
with the derivatives $f_z$ and $f_u$ being bounded and being
independent of $(x,y,z,u)$.
\begin{thm}\label{dthm2}
Suppose that for some positive integer $k$, $\mathbf{(C1)}$,
$\mathbf{(C2)_k}$, $\mathbf{(C5)_k}$ and $\mathbf{(C6)}$ are all
satisfied. Then we have almost surely
\begin{equation*}
\begin{split}
  Y&\in C^{k-1}(\mR^n;D(0,T;\mR^l)),\\
  Z&\in
C^{k-1}(\mR^n;L^2(0,T;\mR^{l\times d})),\\
U&\in C^{k-1}(\mR^n;L^2(0,T;L^2(\mE;\mR^l))).
\end{split}
\end{equation*} And for any
$p\geq2$ and any multi-index $\gamma$ with $|\gamma|\leq k-1$, we
have
\begin{equation}\label{d13}
  \begin{split}
    &\|\pat^\gamma
    Y_\cdot(x)\|^p_{\mL^{\infty,p}_\mF(0,T;\mR^l)}+\|\pat^\gamma
    Z_\cdot(x)\|^p_{\mL^{2,p}_\mF(0,T;\mR^{l\times d})}
    +\|\pat^\gamma
    U_\cdot(x)\|^p_{\mL^{2,p}_\mF(0,T;L^2(\mE;\mR^l))}\\
    \leq&
    C_p,\ \ \forall x\in\mR^n,
  \end{split}
\end{equation}
 and
\begin{equation}\label{d14}
  \begin{split}
    &\|\pat^\gamma Y_\cdot(x_1)-\pat^\gamma
    Y_\cdot(x_2)\|^p_{\mL^{\infty,p}_\mF(0,T;\mR^l)}+\|\pat^\gamma
    Z_\cdot(x_1)-\pat^\gamma
    Z_\cdot(x_2)\|^p_{\mL^{2,p}_\mF(0,T;\mR^{l\times d})}\\
    &+\|U_\cdot(x_1)-U_\cdot(x_2)\|^p_{\mL^{2,p}_\mF(0,T;L^2(\mE;\mR^l))}\\
    \leq&
    C_p|x_1-x_2|^p,\ \ \forall x_1,x_2\in\mR^n.
  \end{split}
\end{equation}
Moreover, the triple $(\pat^\gamma Y(x),\pat^\gamma Z(x),\pat^\gamma
U(x))$ satisfies the following BSDE
\begin{equation}
\left\{
\begin{array}{cc}\label{d15}
    \begin{split}
      \pat^\gamma
    Y_t(x)=&-\{f_y(t,X_t(x),Y_t(x))\pat^\gamma Y_t(x)+f_z(t)\pat^\gamma
    Z_t(x)+f_u(t)\pat^\gamma
    U_t(x)\}dt\\
    &-\{f_x(t,X_t(x),Y_t(x))\pat^\gamma
    X_t(x)+P_\gamma(t,x)\}dt+\pat^\gamma Z_t(x)dW_t\\
    &+\int_E\pat^\gamma
    U_t(e,x)\wtil{N}(dedt)\\
    \pat^\gamma Y_T(x)=&\pat^\gamma[\varphi(X_T(x))]
    \end{split}
    \end{array}
    \right.
\end{equation}
where $P_\gamma(t,x)$ is a $n$-dimensional vector whose components
are polynomials of the partial derivatives up to order $|\gamma|-1$
of the components of $X_t(x)$ and $Y_t(x)$, with the partial
derivatives of order $|\gamma|$ of the components of $f$ as
coefficients.
\end{thm}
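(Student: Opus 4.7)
The plan is to prove the theorem by induction on $k$, exploiting the crucial structural simplification provided by $\mathbf{(C6)}$: because $f$ is affine in $(z,u)$ with derivatives $f_z(t),f_u(t)$ that are bounded and independent of $(x,y,z,u)$, every time we differentiate the BSDE in $x$ the result remains a linear BSDE in the derivative triple, with the same bounded coefficients in front of $\pat^\gamma Z$ and $\pat^\gamma U$, and a free term that only involves strictly lower-order derivatives. Without $\mathbf{(C6)}$ the second differentiation would produce quadratic-type terms such as $f_{zz}(\pat Z)^2$, which obstructs $L^p$ estimates; under $\mathbf{(C6)}$ these do not appear, and the induction closes.

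The base cases $|\gamma|=0,1$ are covered directly by Lemma \ref{dlem1}. For the inductive step, suppose \eqref{d13}--\eqref{d14} hold for all multi-indices of length at most $|\gamma|-1$. Differentiating \eqref{d10} formally $|\gamma|$ times and collecting terms yields \eqref{d15}, where $P_\gamma(t,x)$ is a polynomial (via Fa\`a di Bruno/chain rule) in the partial derivatives of order at most $|\gamma|-1$ of the components of $X_t(x)$ and $Y_t(x)$, with coefficients given by partial derivatives of $f$ of order $|\gamma|$ in $(x,y)$; crucially, because of $\mathbf{(C6)}$, no product involving $\pat^\beta Z$ or $\pat^\beta U$ with $|\beta|\ge 1$ appears nonlinearly. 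The terminal datum $\pat^\gamma[\varphi(X_T(x))]$ is similarly a polynomial in $\pat^\beta X_T(x)$, $|\beta|\le|\gamma|$, with coefficients given by derivatives of $\varphi$ up to order $|\gamma|$. Using $\mathbf{(C2)_k}$, the moment estimates of Kunita/Fujiwara--Kunita give $\|\pat^\beta X_\cdot(x)\|_{\mL^{\infty,p}_\mF(0,T;\mR^n)}\le C_p$ for all $|\beta|\le k$, so combined with $\mathbf{(C5)_k}$ and the inductive hypothesis we obtain
\begin{equation*}
\|f_x(t,X_t(x),Y_t(x))\pat^\gamma X_t(x)+P_\gamma(t,x)\|_{\mL^{2,p}_\mF(0,T;\mR^l)}+\|\pat^\gamma[\varphi(X_T(x))]\|_{L^p(\mF_T;\mR^l)}\le C_p,
\end{equation*}
uniformly in $x$.

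Equation \eqref{d15} is then a linear BSDE in $(\pat^\gamma Y,\pat^\gamma Z,\pat^\gamma U)$ whose generator $(y,z,u)\mapsto f_y(t,X_t(x),Y_t(x))y+f_z(t)z+f_u(t)u$ is globally Lipschitz with a deterministic constant (thanks to $\mathbf{(C2)_k}$, $\mathbf{(C5)_k}$, and $\mathbf{(C6)}$) and whose free term and terminal value lie in $\mL^{2,p}_\mF$ and $L^p$ respectively. Existence, uniqueness, and the bound \eqref{d13} therefore follow immediately from Theorem \ref{dthm1}. For the Lipschitz bound \eqref{d14}, I would write the BSDE satisfied by the difference $\pat^\gamma(Y,Z,U)(x_1)-\pat^\gamma(Y,Z,U)(x_2)$, observe that the discrepancy in the driver and in the terminal condition can be bounded in the relevant norms by $C_p|x_1-x_2|$ (using the inductive Lipschitz bounds together with the analogous bounds for $\pat^\beta X_\cdot$ and the smoothness of $f_y,f_x,\varphi$), and apply the a priori estimate of Theorem \ref{dthm1} once more.

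Finally, the $C^{k-1}$-regularity in $x$ of the trajectories is recovered by Kolmogorov's continuity criterion for Banach-space-valued processes: taking $p>n$ in \eqref{d14} for $|\gamma|=k-1$ yields a H\"older-in-$x$ version of $\pat^\gamma Y_\cdot$, $\pat^\gamma Z_\cdot$, $\pat^\gamma U_\cdot$ with values in $D(0,T;\mR^l)$, $L^2(0,T;\mR^{l\times d})$, $L^2(0,T;L^2(\mE;\mR^l))$ respectively, and the version of Kolmogorov's theorem for cadlag processes (as in Kunita \cite{Ku1}) then provides the indistinguishable version with the claimed joint regularity. The main obstacle in this program is bookkeeping: cleanly identifying $P_\gamma$ via Fa\`a di Bruno and verifying that every term is a product of lower-order factors whose $L^p$ norms have already been controlled; once this is done, linearity of \eqref{d15} makes the rest a direct appeal to Theorem \ref{dthm1}.
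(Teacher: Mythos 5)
Your proposal matches the paper's proof in all essentials: induction on $k$ with Lemma \ref{dlem1} as the base case, the linearity forced by $\mathbf{(C6)}$ keeping \eqref{d15} a linear BSDE with bounded coefficients to which the a priori estimate of Theorem \ref{dthm1} applies, and a Kolmogorov-type continuity criterion to upgrade the moment and Lipschitz bounds to pathwise $C^{k-1}$ regularity. The only real difference is one of rigor at the inductive step: the paper does not differentiate formally but first proves the uniform and Lipschitz estimates for the difference quotients $\Delta^i_h\pat^\gamma(Y,Z,U)$ (following Buckdahn--Pardoux), applies Kolmogorov's theorem to obtain the $C^k$ modification, and only then identifies the equation \eqref{d15} by letting $h\rightarrow0$; this is exactly the step your "formal differentiation" would need to be replaced by to justify that the solution of the linear BSDE really is the derivative.
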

\begin{proof}
  We use the principle of induction. For the case $k=2$, Theorem
  \ref{dthm2} holds in view of Lemma \ref{dlem1}. Suppose that
  Theorem \ref{dthm2} is true for $k>2$. Now we prove Theorem
  \ref{dthm2} is true for the case $k+1$.
  By Theorem \ref{dthm1} and
  the induction assumptions, we can proceed as \cite[Proposition3.3]{BuPa} to deduce from the equation
  \eqref{d15} that for any $p\geq2$ and multi-index $\gamma$ with $|\gamma|=k-1$, there exists a positive constant $C_p$ such that
  for any $x,x'\in\mR^n$, $h,h'\in\mR$, $1\leq i\leq n$,
  \begin{equation*}
    \begin{split}
      &\|\Delta^i_h\pat^\gamma Y(x)\|^p_{\mL^{\infty,p}_\mF(0,T;\mR^l)}+\|\Delta^i_h\pat^\gamma Z(x)\|^p_{\mL^{2,p}_\mF(0,T;\mR^{l\times d})}\\
      +&\|\Delta^i_h\pat^\gamma U(x)\|^p_{\mL^{2,p}_\mF(0,T;L^2(\mE;\mR^l))}\\
      \leq& C_p,\\
      &\|\Delta^i_h\pat^\gamma Y(x)-\Delta^i_{h'}\pat^\gamma Y(x')\|^p_{\mL^{\infty,p}_\mF(0,T;\mR^l)}+\|\Delta^i_h\pat^\gamma Z(x)-\Delta^i_{h'}\pat^\gamma Z(x')\|^p_{\mL^{2,p}_\mF(0,T;\mR^{l\times d})}\\
      +&\|\Delta^i_h\pat^\gamma U(x)-\Delta^i_{h'}\pat^\gamma U(x')\|^p_{\mL^{2,p}_\mF(0,T;L^2(\mE;\mR^l))}\\
      \leq&
      C_p(|x-x'|^p+|h-h'|^p)
    \end{split}
  \end{equation*}
  where
  $\Delta^i_h\Phi(x):=\frac{1}{h}(\Phi(x+he_i)-\Phi(x))$
  and $(e_1,\cdots,e_n)$ is the orthogonal basis of $\mR^n$. From Kolmogorov's theorem (see, for instance, \cite[Theorem 1.4.1, page 31]{Ku2}), there exists a modification, still denoted by $(Y(x),Z(x),U(x))$, of the solution
  of equation \eqref{d10} such that almost surely
  \begin{equation*}
  \begin{split}
  Y&\in
C^{k}(\mR^n;D(0,T;\mR^l)),\\
 Z&\in C^{k}(\mR^n;L^2(0,T;\mR^{l\times
d})),\\
U&\in C^{k}(\mR^n;L^2(0,T;L^2(\mE;\mR^l))).
\end{split}
\end{equation*}
And the estimates \eqref{d13} and
\eqref{d14} hold for $|\gamma|= k$. In view of the equations
corresponding to $(\pat^\gamma Y(x+he_i),\pat^\gamma
Z(x+he_i),\pat^\gamma U(x+he_i))$ and $(\pat^\gamma Y(x),\pat^\gamma
Z(x),\pat^\gamma U(x))$ and passing to the limit as $h\rightarrow0$
in the expression
$$\frac{1}{h}(\pat^\gamma Y(x+he_i)-\pat^\gamma Y(x)),$$ we can
obtain that for $|\gamma|=k$, $(\pat^\gamma Y(x),\pat^\gamma
Z(x),\pat^\gamma U(x))$ satisfies some BSDE of the form \eqref{d15}.
The proof is complete.
\end{proof}
To get the differentiability of $Z_t(x)$ with respect to $x$ for
$(t,\omega)\in[0,T]\times\Omega$ and the differentiability of
$U_t(e,x)$ with respect to $x$ for $(t,e,\omega)\in[0,T]\times
E\times\Omega$, we need the following version of Kolmogorov's
continuity criterion.
\begin{lem}(\cite[Lemma 1, pages 46-47]{Sz})\label{dlem2}
  Let $\mathbb{B}$ be a Banach space and $X\in
  C^{k,a}(\mR^n;L^p(\mF;\mathbb{B}))$ with $k+a>\frac{n}{p}+j$
  ($j$ is a nonnegative integer). Then there is unique mapping
  $\wtil{X}:\mR^n\times\Omega\rightarrow\mathbb{B}$ such that

  (i) for each $\omega\in\Omega$, $\wtil{X}(\cdot,\omega)\in
  C^j(\mR^n;\mathbb{B})$;

  (ii) for each $x\in\mR^n$, $\wtil{X}(x,\cdot)$ is $\mF$ measurable, and the derivatives $\pat^\gamma\wtil{X}(x,\omega)$ for all
 multi- index $\gamma$ such that $0\leq|\gamma|\leq j$ are
 indistinguishable with the derivatives $\pat^\gamma X$ in
 $L^p(\mF;\mathbb{B})$.

 Moreover, if $B:=B(x_0,R)$ is a disk of $\mR^n$, there is a
 nonnegative constant $C_{k,a,p,B}$ such that for all $X\in
  C^{k,a}(\mR^n;L^p(\mF;\mathbb{B}))$ we have
  $$\|\|\wtil{X}(\cdot,\cdot)\|_{C^j(B;\mathbb{B})}\|_{L^p}\leq
  C_{k,a,p,B}\|X\|_{C^{k,a}(B;L^p)}$$
  where $\wtil{X}$ is a regular version of $X$ satisfying (i) and
  (ii).
\end{lem}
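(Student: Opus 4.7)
The plan is to prove this as a higher-regularity, Banach-valued version of the Kolmogorov--Chentsov continuity criterion, implemented derivative by derivative via a dyadic chaining argument. The input hypothesis $X\in C^{k,a}(\mR^n;L^p(\mF;\mathbb{B}))$ together with $k+a>n/p+j$ is tailored precisely so that each partial derivative $\pat^\gamma X$ of order $|\gamma|\le j$ still has enough $L^p$-Hölder regularity in $x$ to admit a continuous modification.

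First I would dispose of the base case $j=0$. Here the assumption reduces to $X\in C^{k,a}(\mR^n;L^p(\mF;\mathbb{B}))$ with $k+a>n/p$, and the standard Banach-valued Kolmogorov theorem---applied either to increments if $k+a\le 1$, or to higher-order finite differences (so as to exploit the full exponent $k+a$)---produces a unique continuous modification $\wtil X$. The dyadic telescoping over a fixed ball $B$ simultaneously gives the local estimate
\[
\bigl\|\,\|\wtil X(\cdot,\cdot)\|_{C(B;\mathbb{B})}\bigr\|_{L^p}\le C_{k,a,p,B}\,\|X\|_{C^{k,a}(B;L^p)}.
\]
For $j\ge 1$, the key observation is that for every multi-index $\gamma$ with $|\gamma|\le j$ the map $x\mapsto\pat^\gamma X(x)$ lies in $C^{k-|\gamma|,a}(\mR^n;L^p(\mF;\mathbb{B}))$, and the hypothesis forces $(k-|\gamma|)+a>n/p+(j-|\gamma|)\ge n/p$. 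The base case then yields, for each such $\gamma$, a jointly $x$-continuous $\mathbb{B}$-valued modification $Y_\gamma(\cdot,\cdot)$ of $\pat^\gamma X(\cdot)$, together with the local $L^p$-bound on $\sup_{x\in B}\|Y_\gamma(x,\cdot)\|_\mathbb{B}$.

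The second main step is to glue these modifications into a single $C^j$ field. Setting $\wtil X:=Y_0$, I would use Taylor's formula with integral remainder of order $j$ against $Y_{e_i},\ldots,Y_\gamma$ ($|\gamma|=j$) on an event of full probability---noting that this event can be chosen independently of $\gamma$ because there are only finitely many multi-indices with $|\gamma|\le j$---to show that $\wtil X(\cdot,\omega)\in C^j(\mR^n;\mathbb{B})$ for all $\omega$ in that event, with classical partial derivatives coinciding with $Y_\gamma(\cdot,\omega)$. The identification $\pat^\gamma\wtil X(x,\cdot)=\pat^\gamma X(x,\cdot)$ in $L^p(\mF;\mathbb{B})$ for each fixed $x$ and each $|\gamma|\le j$ then follows from the $L^p$-convergence of difference quotients and the continuity of $Y_\gamma$ in $x$. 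Summing the base-case estimates over $|\gamma|\le j$ yields the asserted bound on $\bigl\|\,\|\wtil X\|_{C^j(B;\mathbb{B})}\bigr\|_{L^p}$, while uniqueness of $\wtil X$ is immediate from the continuity in $x$ and the $L^p$-identification at each rational point.

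The main obstacle I expect is precisely this patching: one needs to arrange the various continuous modifications $Y_\gamma$ on a single $\omega$-set of full probability (not a $\gamma$-dependent one) and then certify that on this event $\wtil X$ genuinely admits classical $C^j$ partial derivatives equal to the $Y_\gamma$, rather than only an integral-remainder representation. The finiteness of the index set $\{\gamma:|\gamma|\le j\}$ and the uniqueness part of the base case make this manageable, but the bookkeeping is the nontrivial heart of the argument; everything else reduces to standard Kolmogorov chaining and a summation of $L^p$ norms.
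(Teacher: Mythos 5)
The paper offers no proof of this lemma: it is imported verbatim from Sznitman \cite{Sz}, so there is no internal argument to compare yours against, and your proposal has to be judged on its own terms. On those terms the plan is sound and is essentially the classical chaining route to such regularization statements: for each $|\gamma|\le j$ one has $\pat^\gamma X\in C^{k-|\gamma|,a}(\mR^n;L^p(\mF;\mathbb{B}))$ with $(k-|\gamma|)+a>n/p+(j-|\gamma|)\ge n/p$, a Kolmogorov-type criterion produces a continuous modification $Y_\gamma$ together with the local $L^p$ bound on $\sup_{x\in B}\|Y_\gamma(x)\|_{\mathbb{B}}$, and the finitely many $Y_\gamma$ are glued on a single full-measure event via Taylor's formula with integral remainder (first for $(x,h)$ in a countable dense set, then everywhere by continuity, then invoking the deterministic fact that a continuous field admitting a Taylor expansion with continuous coefficient fields is $C^j$ with those coefficients as derivatives). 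Two caveats. First, your phrase ``standard Kolmogorov theorem \dots applied to higher-order finite differences'' is carrying real weight: when $n\ge p$ and $k-|\gamma|\ge 1$, the first-order increment bound $\|\pat^\gamma X(x)-\pat^\gamma X(y)\|_{L^p}\le C|x-y|$ has exponent $1\le n/p$ and classical chaining fails, so you genuinely need the Besov-type criterion based on differences of order $k-|\gamma|+1$; that criterion is true, but its proof (chaining against interpolating polynomials rather than telescoping increments) is at least as delicate as the patching step you identify as the main obstacle, so it should not be waved through as ``standard''. Second, a shorter route, closer in spirit to Sznitman's original argument, avoids both chaining and the Taylor patching: one bounds $E\|X\|^p_{W^{s,p}(B;\mathbb{B})}$ for $s<k+a$ by the double-integral (Slobodeckij) characterization of the fractional Sobolev norm together with Fubini, and then applies the deterministic embedding $W^{s,p}(B;\mathbb{B})\subset C^j(\overline{B};\mathbb{B})$ for $s>n/p+j$ pathwise; this yields the modification, its $C^j$ regularity, and the displayed estimate $\|\|\wtil{X}\|_{C^j(B;\mathbb{B})}\|_{L^p}\le C\|X\|_{C^{k,a}(B;L^p)}$ in one stroke. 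Either route proves the lemma; yours is more elementary in its ingredients but longer and with more bookkeeping.
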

As a consequence of Theorem \ref{dthm2} and Lemma \ref{dlem2}, we
have
\begin{thm}\label{dthm3}
  Suppose $\mathbf{(C1)}$, $\mathbf{(C2)_{k+1}}$ and $\mathbf{(C5)_{k+1}}$ are satisfied with
  $k>\frac{n}{2}+j$ for some positive integer $j\geq2$, and $\mathbf{(C6)}$
  is satisfied. Let $(Y(x),Z(x),U(x))$ be the solution of BSDE \eqref{d10}. We have for any compact subset $K\subseteq\mR^n$,
  \begin{equation*}
    \begin{split}
      &E\sup_{0\leq t\leq T}\|Y_t(\cdot)\|_{C^k(K;\mR^l)}<\infty,\\
      &E\int_0^T\|Z_t(\cdot)\|^2_{C^j(K;\mR^{l\times
  d})}dt<\infty,\\
  &E\int_0^T\int_E\|U_t(e,\cdot)\|^2_{C^j(K;\mR^l)}v(de)dt<\infty.
    \end{split}
  \end{equation*}
\end{thm}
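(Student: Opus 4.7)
The plan is to combine the Sobolev-type $L^p$ estimates on the derivatives of $(Y,Z,U)$ from Theorem \ref{dthm2} with the pathwise regularization criterion Lemma \ref{dlem2} to handle the $Y$ bound, and to complement Lemma \ref{dlem2} with the classical Sobolev embedding $W^{s,2}(K)\hookrightarrow L^\infty(K)$ (for $s>n/2$) to handle $Z$ and $U$, where the $t$-integral cannot be pulled out of the $C^j$-sup in $x$ by Kolmogorov alone.

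Under $\mathbf{(C1)}$, $\mathbf{(C2)_{k+1}}$, $\mathbf{(C5)_{k+1}}$ and $\mathbf{(C6)}$, Theorem \ref{dthm2} provides for every $p\ge 2$ and every multi-index $\gamma$ with $|\gamma|\le k$ uniform-in-$x$ moment bounds together with Lipschitz-in-$x$ estimates for $\pat^\gamma Y$, $\pat^\gamma Z$, $\pat^\gamma U$ in the spaces $\mL^{\infty,p}_\mF(0,T;\mR^l)$, $\mL^{2,p}_\mF(0,T;\mR^{l\times d})$, $\mL^{2,p}_\mF(0,T;L^2(\mE;\mR^l))$ respectively. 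For $Y$, viewed as a map $x\mapsto Y_\cdot(x)$ with values in $L^p(\mF;D(0,T;\mR^l))$, these estimates place it in $C^{k,1}(\mR^n;L^p(\mF;D(0,T;\mR^l)))$ for every $p\ge 2$. Picking $p>n$ and applying Lemma \ref{dlem2} with $\mathbb{B}=D(0,T;\mR^l)$, Hölder exponent $1$ and target regularity $k$ (the condition $k+1>n/p+k$ reduces to $p>n$) yields a version $\wtil Y$ with $E\|\wtil Y\|^p_{C^k(B;D(0,T;\mR^l))}<\infty$ on every ball $B$. The pointwise inequality $\sup_{0\le t\le T}\|Y_t(\cdot)\|_{C^k(B)}\le\|\wtil Y\|_{C^k(B;D(0,T;\mR^l))}$ then gives the first claim on each ball, and on an arbitrary compact $K$ by a finite covering.

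For $Z$ and $U$ one can afford to lose $k-j$ derivatives in $x$ and I would invoke the Sobolev embedding in the spatial variable. Fix an integer $s$ with $n/2<s\le k-j$, which exists because $k>n/2+j$. For $|\gamma|\le j$ and $|\beta|\le s$ (so $|\gamma+\beta|\le k$), the embedding $W^{s,2}(K)\hookrightarrow L^\infty(K)$ applied pathwise in $t$ gives
\begin{equation*}
\sup_{x\in K}|\pat^\gamma Z_t(x)|^2\le C\sum_{|\beta|\le s}\int_K|\pat^{\gamma+\beta}Z_t(x)|^2\,dx.
\end{equation*}
Integrating in $t$, taking expectation, applying Fubini and using the $p=2$ bound $E\int_0^T|\pat^{\gamma+\beta}Z_t(x)|^2\,dt\le C$ from Theorem \ref{dthm2} (uniform in $x$), one obtains $E\int_0^T\sup_{x\in K}|\pat^\gamma Z_t(x)|^2\,dt\le C|K|$. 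Summing over $|\gamma|\le j$ establishes the $Z$-claim; the $U$-claim follows by the same argument with the additional $v(de)$-integration handled by Fubini.

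The principal technical point is producing a jointly $(t,x,\omega)$-measurable version of $Z$ (resp.\ $U$) to which the Sobolev inequality can be applied pathwise at each $t$. I would settle this by a preliminary application of Lemma \ref{dlem2} with $\mathbb{B}=L^2(0,T;\mR^{l\times d})$ (resp.\ $\mathbb{B}=L^2([0,T]\times E;\mR^l)$), which gives versions that are $C^j$ in $x$ with $L^2$-in-$t$ values; the Sobolev estimate then holds at almost every $t$ off a null event, which is all that is needed for the integrated bound. The rest is routine bookkeeping of multi-indices and the constants from Theorem \ref{dthm2}.
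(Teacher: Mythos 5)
Your proof is correct, and it is in fact more complete than what the paper offers: the paper states Theorem \ref{dthm3} with no written argument, merely as ``a consequence of Theorem \ref{dthm2} and Lemma \ref{dlem2}.'' For the $Y$-bound your route is exactly the intended one: the uniform and Lipschitz estimates \eqref{d13}--\eqref{d14} up to order $k$ place $x\mapsto Y_\cdot(x)$ in $C^{k,1}(\mR^n;L^p(\mF;D(0,T;\mR^l)))$, and Sznitman's lemma with $p>n$ produces the $C^k$-in-$x$ version with the required moment, the sup over $t$ and the sup over $x$ being interchangeable. Your key observation --- that for $Z$ and $U$ the same lemma with $\mathbb{B}=L^2(0,T;\cdot)$ only controls $E\sup_{x\in K}\int_0^T|\pat^\gamma Z_t(x)|^2dt$, whereas the theorem (and its later uses, in verifying \eqref{e2} and hypothesis $\mathbf{(A2)}$ of the It\^{o}-Wentzell formula) requires the stronger quantity $E\int_0^T\sup_{x\in K}|\pat^\gamma Z_t(x)|^2dt$ with the sup inside the time integral --- identifies a real gap in the paper's one-line justification, and your fix via the spatial Sobolev embedding $W^{s,2}\hookrightarrow L^\infty$ for an integer $s$ with $n/2<s\le k-j$ (which exists precisely because $k>n/2+j$), combined with the uniform-in-$x$ bounds of \eqref{d13} at $p=2$ and Fubini, is the standard and correct way to close it; the same computation handles $U$ with the extra $v(de)$ integration absorbed by Fubini. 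The measurability caveat you flag (selecting a jointly measurable version so that the Sobolev inequality applies at a.e.\ $t$ off a single null event) is genuine but routine, and your proposed resolution via a preliminary application of Lemma \ref{dlem2} is adequate. Two cosmetic points only: the Sobolev inequality should be applied on a ball or other extension domain containing $K$ rather than on an arbitrary compact $K$ itself, and the constant in your first display for $Z$ accordingly depends on that enlarged domain rather than on $|K|$.
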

\section{Classical solutions to BSIPDEs}
In this section, we consider classical solutions to BSIPDEs driven
by a Brownian motion and a Poisson point process. First of all, we
establish the relationship between BSIPDEs and non-Markovian FBSDEs.

Denote by $(X^s_t(x),Y^s_t(x),Z^s_t(x),U^s_t(\cdot,x))_{t\in[s,T]}$
the solution of the following non-Markovian FBSDE
\begin{equation}
\left\{
\begin{split}\label{4ee1}
dX_t&=b(t,X_t)dt+\sigma(t,X_t)dW_t+\int_Eg(t,e,X_{t-})\wtil{N}(dedt),\\
dY_t&=-f(t,X_t,Y_t,Z_t,U_t)dt+Z_tdW_t+\int_EU_t(e)\wtil{N}(dedt),\\
X_s&=x,\ Y_T=\varphi(X_T),\ t\in[s,T].
\end{split}
\right.
\end{equation}
\begin{thm}
  Suppose $p(s,x):=Y^s_s(x)$ can be written as a semimartingale of the following form
    \begin{equation}\label{4ee2}
  p(s,x)=\varphi(x)-\int_s^T\Phi(l,x)dl-\int_s^Tq(l,x)dW_l-\int_s^T\int_Er(l,e,x)\wtil{N}(dedl),\
  s\in[0,T].
  \end{equation}
  Then $(p,q,r)$ formally satisfies the BSIPDE \eqref{a1}.
\end{thm}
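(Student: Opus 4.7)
The strategy is to exploit two facts: (i) by the flow property of the forward SDE, $X^0_t(x)=X^s_t(X^0_s(x))$ for $0\le s\le t\le T$, and (ii) by uniqueness of adapted solutions to the BSDE, the random field $Y^s_t(y)$ evaluated at $y=X^0_s(x)$ coincides with $Y^0_t(x)$, and likewise for $Z$ and $U$. In particular, evaluating at $t=s$ yields
\[
Y^0_s(x)=Y^s_s(X^0_s(x))=p(s,X^0_s(x)),
\]
so it suffices to compute $dp(s,X^0_s(x))$ via the It\^o--Wentzell formula (Lemma \ref{lemcc1}) and match it against the FBSDE \eqref{4ee1}.

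First I would rewrite the backward decomposition \eqref{4ee2} of $p$ in forward form
\[
p(t,x)=p(0,x)+\int_0^t\Phi(l,x)\,dl+\int_0^t q(l,x)\,dW_l+\int_0^t\!\!\int_E r(l,e,x)\widetilde N(de\,dl),
\]
so that $G=\Phi$, $H=q$, $J=r$ in the notation of Lemma \ref{lemcc1}, and the coefficients $b(t),\sigma(t),g(t,e)$ driving $X^0_\cdot(x)$ become $b(t,X^0_{t-}),\sigma(t,X^0_{t-}),g(t,e,X^0_{t-})$. Applying Lemma \ref{lemcc1} to $p(t,X^0_t(x))$ produces, in addition to $\Phi$, $q$, $r$ terms evaluated at $X^0_{t-}(x)$, the contributions $\langle\partial p,b\rangle+\tfrac12\ll\partial^2p,\sigma\sigma^*\gg+\ll\partial q,\sigma\gg$ in the $dt$ part, $\langle\partial p,\sigma\rangle\,dW_t$ in the Brownian part, and jump terms of the form $p(t-,\phi_{t,e}(X^0_{t-}))-p(t-,X^0_{t-})+r(t,e,\phi_{t,e}(X^0_{t-}))$ against $\widetilde N$, with the appropriate compensator $-\langle\partial p,g\rangle v(de)\,dt$ together with $r(t,e,\phi_{t,e})-r(t,e,\cdot)$ against $v(de)\,dt$. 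Using the definitions \eqref{oper} of $\mathcal L$ and $\mathcal M$ (note that $\mathcal L$ already absorbs $-\int_E g\,v(de)\,\partial p$, so it combines with the jump-compensator term $-\langle\partial p,g\rangle v(de)$ and the non-local piece $p(\phi_{t,e})-p$ cleanly), the sum can be repackaged as $\Phi+\mathcal Lp+\mathcal Mq+\int_E[p(t-,\phi_{t,e})-p(t-,\cdot)+r(t,e,\phi_{t,e})-r(t,e,\cdot)]\,v(de)$.

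Next I would compare this against the BSDE dynamics $dY^0_t=-f(t,X^0_t,Y^0_t,Z^0_t,U^0_t)\,dt+Z^0_t\,dW_t+\int_E U^0_t(e)\widetilde N(de\,dt)$. Matching the $dW$ integrands gives
\[
Z^0_t(x)=q(t,X^0_{t-}(x))+\partial p(t-,X^0_{t-}(x))\sigma(t,X^0_{t-}(x)),
\]
matching the $\widetilde N$ integrands gives
\[
U^0_t(e,x)=p(t-,\phi_{t,e}(X^0_{t-}(x)))-p(t-,X^0_{t-}(x))+r(t,e,\phi_{t,e}(X^0_{t-}(x))),
\]
and matching the $dt$ parts determines $\Phi(t,X^0_{t-}(x))$. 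Since $X^0_{t-}(\cdot)$ is a homeomorphism of $\mathbb R^n$ by Lemma \ref{lemb2}, I may replace $X^0_{t-}(x)$ by an arbitrary $x\in\mathbb R^n$ to obtain pointwise identities. Inverting the first two relations (using $\phi_{t,e}^{-1}$ on $x$ in the second) expresses $q$ and the non-local jump argument of $f$ in the exact form required by \eqref{a1}, and substituting into $\Phi$ yields
\[
-\Phi(t,x)=\mathcal Lp+\mathcal Mq+f\bigl(t,x,p,q+\partial p\sigma,r(t,\cdot,\phi_{t,\cdot})-p+p(\phi_{t,\cdot})\bigr)-\!\int_E\!\!\bigl[r(t,e,x)-r(t,e,\phi_{t,e})+p-p(\phi_{t,e})\bigr]v(de),
\]
which is precisely the coefficient of $dt$ in \eqref{a1}. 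The terminal condition $p(T,x)=\varphi(x)$ is read off from \eqref{4ee2}.

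The main obstacle I anticipate is bookkeeping rather than conceptual: ensuring that the drift contribution $-\int_E g\,v(de)\,\partial p$ hidden inside $\mathcal L$ combines correctly with the It\^o--Wentzell compensator $-\langle\partial p,g\rangle v(de)$ so that only the genuinely non-local pieces $p(\phi_{t,e})-p$ and $r(t,e,\phi_{t,e})-r(t,e,\cdot)$ survive on both sides, and being careful about the $t-$ versus $t$ distinction in jump terms. Since the statement is only "formal", I do not need to verify the integrability hypotheses $\mathbf{(A1)}$--$\mathbf{(A2)}$ of Lemma \ref{lemcc1}; I simply assume that $(p,q,r)$ and their derivatives are regular enough for the It\^o--Wentzell formula to apply.
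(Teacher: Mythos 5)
Your proposal is correct and follows essentially the same route as the paper: apply the It\^o--Wentzell formula of Lemma \ref{lemcc1} to the composition of $p$ with the forward flow, compare the result with the BSDE dynamics of $Y$, and match the $dW$, $\widetilde N$ and $dt$ integrands to identify $\Phi$ (your final formula for $\Phi$ agrees with the paper's \eqref{4ee5} once $\mathcal L$ absorbs the $-\int_E g\,v(de)\,\partial p$ term). The only cosmetic difference is that the paper works with the flow $X^s_\cdot(x)$ started at time $s$ from $x$, so the pointwise identity at $x$ is read off directly at $l=s$, whereas you start the flow at time $0$ and then invoke the homeomorphism property of $X^0_{t-}(\cdot)$ to pass from identities along the flow to identities at arbitrary $x$.
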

\begin{proof}
  Applying It\^{o}-Wentzell formula to $p(l,X^s_l(x))$, we have
  \begin{equation}
 \begin{split}\label{4ee3}
  &p(t,X^s_t(x))\\
  =&p(s,x)+\int_s^t\Phi(l,X_{l-}^s(x))dl+\int_s^tq(l,X_{l-}^s(x))dW_l+\int_s^t\langle\pat
  p(l-,X_{l-}^s(x)),b(l,X_{l}^s(x))\rangle dl\\
  &+\int_s^t\langle\pat
  p(l-,X_{l-}^s(x)),\sigma(l,X_{l}^s(x))dW_l\rangle
  +\frac{1}{2}\int_s^t\ll\pat^2p(l-,X_{l-}^s(x)),\sigma\sigma^*(l,X_{l-}^s(x))\gg
  dl\\
  &+\int_s^t\ll\pat
  q(l,X_{l-}^s(x)),\sigma(l,X_{l-}^s(x))\gg dl\\
  &-\int_s^t\int_E\big[r(l,e,\phi_{l,e}(X_{l-}^s(x)))-r(l,e,X_{l-}^s(x))\big]v(de)dl\\
  &+\int_s^t\int_E\big[p(l-,\phi_{l,e}(X_{l-}^s(x)))-p(l-,X_{l-}^s(x))-\langle\pat
  p(l-,X_{l-}^s(x)),g(l,e,X_{l-}^s(x))\rangle\big]v(de)dl\\
  &+\int_s^t\int_E\big[p(l-,\phi_{l,e}(X_{l-}^s(x)))-p(l-,X_{l-}^s(x))+r(l,e,\phi_{l,e}(X_{l-}^s(x)))\big]\widetilde{N}(dedl).
 \end{split}
\end{equation}
On the other hand,
\begin{equation}\label{4ee4}
\begin{split}
p(t,X^s_t(x))=&Y^t_t(X^s_t(x))\\
=&Y^s_t(x)\\
=&p(s,x)-\int_s^tf(l,X_l^s(x),Y_l^s(x),Z_l^s(x),U_l^s(e,x))dl\\
&+\int_s^tZ_l^s(x)dW_l+\int_s^t\int_EU^s_l(e,x)\wtil{N}(dedl).
\end{split}
\end{equation}
Comparing \eqref{4ee3} with \eqref{4ee4}, we obtain
\begin{equation}
  \begin{split}\label{4ee5}
  &\Phi(s,x)\\
  =&-f(s,x,p(s,x),q(s,x)+\pat
  p(s-,x)\sigma(s,x),p(s-,\phi_{s,\cdot}(x))-p(s-,x)+r(s,\cdot,\phi_{s,\cdot}(x)))\\
&-\langle\pat
  p(s-,x),b(s,x)\rangle-\frac{1}{2}\ll\pat^2p(s-,x),\sigma\sigma^*(s,x)\gg\\
  &-\ll\pat
  q(s,x),\sigma(s,x)\gg-\int_E\big[r(s,e,\phi_{s,e}(x))-r(s,e,x)\big]v(de)\\
  &-\int_E[p(s-,\phi_{s,e}(x))-p(s-,x)-\langle\pat
  p(s-,x),g(s,e,x)\rangle\big]v(de).
  \end{split}
\end{equation}
Taking \eqref{4ee5} into \eqref{4ee2}, we can get the desired
result.
\end{proof}
Next we concern the construction of the classical solution of BSIPDE
\eqref{a1} via the solution of FBSDE \eqref{4ee1}.
\begin{defn}\label{edef1}
  A triple of random fields
  $\{(p(t,x),q(t,x),r(t,e,x)),(t,x,e)\in[0,T]\times\mR^n\times E\}$
  is called an adapted classical solution of BSIPDE \eqref{a1}, if

    (i) $p(\cdot,x)$ is an adapted cadlag process for any
    $x$ and is twice continuously differentiable with respect
    to $x$ for any $t$ almost surely;

    (ii) $q(\cdot,x)$ is a predictable process for any $x$ and is continuously differentiable with respect
    to $x$ for almost any $(t,\omega)\in[0,T]\times\Omega$;

    (iii) $r(\cdot,\cdot,x)$ is $\mathcal {P}\otimes\mathcal {E}$ measurable for any $x$ and is continuous with respect to $x$ for almost
    all $(t,\omega,e)\in[0,T]\times\Omega\times E$;

    (iv) for any compact subset $K\subset\mR^n$ and multi-indices $\beta$ and
  $\gamma$ with $|\beta|\leq2$ and $|\gamma|\leq1$, the triple $(p,q,r)$ satisfies almost
  surely
  \begin{equation}
  \begin{split}\label{e2}
    \sup_{0\leq t\leq T,\ x\in K}|\pat^\beta p(t,x)|<\infty,\\
    \int_0^T\sup_{x\in K}|\pat^\gamma q(t,x)|^2dt<\infty,\\
    \int_0^T\int_E\sup_{x\in K}|r(t,e,x)|^2v(de)dt<\infty;\\
  \end{split}
  \end{equation}

  (v) the following holds almost surely
  \begin{equation}
    \begin{split}
      p(t,x)=&\varphi(x)+\int_t^T[\mathcal {L}(s,x)p(s-,x)+\mathcal
  {M}(s,x)q(s,x)\\
  &+f(s,x,p(s,x),q(s,x)+\partial
  p(s-,x)\sigma,r(s,\cdot,\phi_{s,\cdot}(x))-p(s-,x)+p(s-,\phi_{s,\cdot}(x)))]ds\\
  &-\int_t^T\int_E[r(s,e,x)-r(s,e,\phi_{s,e}(x))+p(s-,x)-p(s-,\phi_{s,e}(x))]v(de)ds\\
  &-\int_t^Tq^{~,k}(s,x)dW^k_s-\int_t^T\int_Er(s,e,x)\widetilde{N}(deds),\ \forall
  (t,x)\in[0,T]\times\mR^n.
    \end{split}
  \end{equation}
\end{defn}
Denote
$(X_t(x),Y_t(x),Z_t(x),U_t(\cdot,x))_{t\in[0,T]}:=(X^0_t(x),Y^0_t(x),Z^0_t(x),U^0_t(\cdot,x))_{t\in[0,T]}$.
We define
\begin{equation}
\begin{split}\label{e3}
  p(t,x):=&Y_t(X_t^{-1}(x)),\\
  q(t,x):=&Z_t(X_{t-}^{-1}(x))-\pat p(t-,x)\sigma(t,x),\\
  r(t,e,x):=&p(t-,\phi_{t,e}^{-1}(x))-p(t-,x)+U_t(e,X_{t-}^{-1}(\phi_{t,e}^{-1}(x))).\\
\end{split}
\end{equation}
\begin{rem}
  At the jump time $\tau$ of the point process $\mathbbm{p}$, we have the
  relation
  $$X_\tau(x)=X_{\tau-}(x)+g(\tau,\mathbbm{p}(\tau),X_{\tau-}(x))=\phi_{\tau,\mathbbm{p}(\tau)}(X_{\tau-}(x))$$
  which implies
  $$X_{\tau-}(x)=\phi_{\tau,\mathbbm{p}(\tau)}^{-1}(X_\tau(x)).$$
  So $X_{t-}(\cdot):\mathbb{R}^n\rightarrow\mathbb{R}^n$ is also a
  homeomorphic mapping for any $t\in[0,T]$ almost surely.
\end{rem}
\begin{thm}\label{ethm1}
  Assume the same conditions for coefficients $b$, $\sigma$ and $g$ as in Theorem \ref{thmc5}. Assume further $\mathbf{(C5)_{k+1}}$ and $\mathbf{(C6)}$ are
  satisfied with $k>2+\frac{n}{2}$. Then the triple $(p,q,r)$
  defined in \eqref{e3} is a classical solution of BSIPDE
  \eqref{a1}.
\end{thm}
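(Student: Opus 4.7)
The plan is to verify each of the five conditions in Definition \ref{edef1} for the triple $(p, q, r)$ defined by \eqref{e3}. The argument has three parts: (a) chain-rule regularity transferred from $(Y, Z, U)$ and from the inverse flow, (b) an application of the It\^{o}-Wentzell formula (Lemma \ref{lemcc1}) to the composition $Y_t(X_t^{-1}(x))$, and (c) a direct check of the terminal condition.

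For conditions (i)--(iv), I would first invoke Theorem \ref{thmc5}, which under the present hypotheses guarantees that $X_t(\cdot)$ is a $C^k$-diffeomorphism for each $t$ with $k > 2 + n/2$; consequently $X_t^{-1}(\cdot)$ and $\phi_{t,e}^{-1}(\cdot)$ are $C^k$ with derivatives bounded on compact sets, and $X_t^{-1}(x)$ is c\`adl\`ag in $t$ with the jump structure $X_{\tau}^{-1}(x) = X_{\tau-}^{-1}(\phi_{\tau, \mathbbm{p}(\tau)}^{-1}(x))$. Theorem \ref{dthm3} supplies the $C^j$-regularity and integrability of $Y_t, Z_t, U_t$ in the spatial variable on compacts. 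Composing via \eqref{e3} and applying the chain rule then yields the stated regularity of $(p, q, r)$ and the bounds \eqref{e2}.

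The heart of the proof is the dynamical identity (v). For each fixed $x$, the process $t \mapsto X_t^{-1}(x)$ is an $\mR^n$-valued semimartingale whose drift, Brownian, and jump coefficients are read off from \eqref{cc1} at the point $x$: drift $(\mM^r \mM^r - \mL)(t, x) u(t-, x) + \int_E \mA(t,e) u(t-, x) v(de)$, Brownian coefficient $-\mM^r(t, x) u(t-, x)$, and jump coefficient $\mA(t, e) u(t-, x) = \phi_{t,e}^{-1}(x) - x$, with $u(t, \cdot) = X_t^{-1}(\cdot)$. Simultaneously the random field $F(t, y) := Y_t(y)$ is a semimartingale in $t$ whose decomposition is given by BSDE \eqref{d10}: drift $-f(t, X_t(y), Y_t(y), Z_t(y), U_t(y))$, Brownian coefficient $Z_t(y)$, jump coefficient $U_t(e, y)$. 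Theorem \ref{dthm3} provides the pointwise and compact-set bounds verifying the hypotheses (A1)--(A2) of Lemma \ref{lemcc1}. Applying that lemma componentwise to $Y_t(X_t^{-1}(x))$ yields a semimartingale decomposition of $p(t, x)$; reading off the $dW^k_t$ integrand gives $Z_t(X_{t-}^{-1}(x)) - \pat p(t-, x)\sigma(t, x) = q(t, x)$, and the $\wtil{N}(dedt)$ integrand, after using $X_{t-}^{-1}(\phi_{t,e}^{-1}(x))$ to rewrite the jump of $X_t^{-1}$, works out to $r(t, e, x)$ exactly as in \eqref{e3}.

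The main obstacle is the drift bookkeeping in the It\^{o}-Wentzell expansion: the second-order term $\tfrac{1}{2}\ll \pat^2 Y_t(X_{t-}^{-1}(x)), \sigma\sigma^*(t, x)\gg$, the cross term $\ll \pat Z_t(X_{t-}^{-1}(x)), \sigma(t, x) \gg$, and the compensator contributions of the Poisson integrals must combine with $\la \pat Y_t(X_{t-}^{-1}(x)), (\mM^r\mM^r - \mL)u(t-, x)\ra$ coming from \eqref{cc1} to regenerate the $\mL p + \mM q$ operators, exactly paralleling the Brownian case of Krylov-Rozovskii \cite{KrRo1} but with the additional Poisson compensator terms reproducing the integral correction $\int_E [r(t,e,x) - r(t, e, \phi_{t,e}(x)) + p(t-, x) - p(t-, \phi_{t,e}(x))] v(de)$ that appears in \eqref{a1}. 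Finally, the terminal condition is immediate: $p(T, x) = Y_T(X_T^{-1}(x)) = \varphi(X_T(X_T^{-1}(x))) = \varphi(x)$.
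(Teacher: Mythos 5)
Your proposal is correct and follows essentially the same route as the paper: invoke Theorem \ref{thmc5} for the SIPDE \eqref{e4} satisfied by the inverse flow, Theorem \ref{dthm3} (together with the estimate \eqref{cc28}) for the regularity conditions (i)--(iv), then apply the It\^{o}-Wentzell formula to $Y_t(X_t^{-1}(x))$ and recombine the second-order, cross, and Poisson compensator terms via the operator identities for $\mM^r p$, $\mM^r\mM^r p$, and $\mL p$ to recover the form \eqref{a1}. The drift bookkeeping you flag as the main obstacle is exactly the ``by computation'' step carried out explicitly in the paper's proof.
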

\begin{proof}
  From Theorem \ref{dthm3} and \eqref{cc28}, we know that the triple
  $(p,q,r)$ satisfies \eqref{e2}. From Theorem \ref{thmc5}, we have
\begin{equation}
  \left\{
  \begin{array}{ccc}
  \begin{split}\label{e4}
    dX_t^{-1}(x)=&(\mM^r\mM^r-\mL)(t,x)X_{t-}^{-1}(x)dt+\int_E\mA(t,e)X_{t-}^{-1}(x)v(de)dt\\
    &-\mM^r(t,x)X_{t-}^{-1}(x)dW^r_t+\int_E\mA(t,e)X_{t-}^{-1}(x)\widetilde{N}(dedt),\
    0\leq t\leq T,\\
    X_0^{-1}(x)=&x.\\
  \end{split}
 \end{array}
  \right.
\end{equation}
In view of Theorem \ref{dthm3} again, we apply the It\^{o}-Wentzell
formula to calculate $Y_t(X_t^{-1}(x))$ and obtain
\begin{equation}
 \left\{
  \begin{split}
    dp(t,x)=&-f(t,x,p(t,x),Z_t(X_{t-}^{-1}(x)),U_t(X_{t-}^{-1}(x)))dt+Z_t(X_{t-}^{-1}(x))dW_t\\
    &+\pat_iY_{t-}(\wtil{x})|_{\wtil{x}=X_{t-}^{-1}(x)}[(\mM^r\mM^r-\mL)(t,x)X_{t-}^{-1,i}(x)+\int_E\mA(t,e)X_{t-}^{-1,i}(x)v(de)]dt\\
    &-\pat_iY_{t-}(\wtil{x})|_{\wtil{x}=X_{t-}^{-1}(x)}\mM^r(t,x)X_{t-}^{-1,i}(x)dW^r_t\\
    &+\frac{1}{2}\pat^2_{ij}Y_{t-}(\wtil{x})|_{\wtil{x}=X_{t-}^{-1}(x)}[\mM^r(t,x)X_{t-}^{-1,i}(x)][\mM^r(t,x)X_{t-}^{-1,j}(x)]dt\\
    &-\pat_iZ^{~,r}(\wtil{x})|_{\wtil{x}=X_{t-}^{-1}(x)}\mM^r(t,x)X_{t-}^{-1,i}(x)dt\\
    &+\int_E[Y_{t-}(X_{t-}^{-1}(\phi_{t,e}^{-1}(x)))-Y_{t-}(X_{t-}^{-1}(x))-\pat_iY_{t-}(\wtil{x})|_{\wtil{x}=X_{t-}^{-1}(x)}(\mA(t,e)X_{t-}^{-1,i}(x))]v(de)dt\\
    &+\int_E[Y_{t-}(X_{t-}^{-1}(\phi_{t,e}^{-1}(x)))+U_t(e,X_{t-}^{-1}(\phi_{t,e}^{-1}(x)))-Y_{t-}(X_{t-}^{-1}(x))]\wtil{N}(dedt)\\
    &+\int_E[U_t(e,X_{t-}^{-1}(\phi_{t,e}^{-1}(x)))-U_t(e,X_{t-}^{-1}(x))]v(de)dt,\ t\in[0,T],\\
    p(T,x)=&\varphi(x)
  \end{split}
  \right.
\end{equation}
where $X_\cdot^{-1,i}(x)$ is the $i$-th component of
$X_\cdot^{-1}(x)$. By computation, we have
\begin{equation}
\begin{split}
  \mM^r(t,x)Z^{~,r}_t(X_{t-}^{-1}(x))=&\pat_iZ_t^{,r}(\wtil{x})|_{\wtil{x}=X_{t-}^{-1}(x)}\mM^r(t,x)X^{-1,i}_{t-}(x),\\
  \mM^r(t,x)p(t-,x)=&\pat_iY_{t-}(\wtil{x})|_{\wtil{x}=X_{t-}^{-1}(x)}\mM^r(t,x)X_{t-}^{-1,i}(x),\\
  \mM^r\mM^rp(t-,x)=&\mM^r(t,x)(\pat_iY_{t-}(\wtil{x})|_{\wtil{x}=X_{t-}^{-1}(x)}\mM^r(t,x)X_{t-}^{-1,i}(x))\\
  =&\mM^r(t,x)(\pat_iY_{t-}(\wtil{x})|_{\wtil{x}=X_{t-}^{-1}(x)})\mM^r(t,x)X_{t-}^{-1,i}(x)\\
  &+\pat_iY_{t-}(\wtil{x})|_{\wtil{x}=X_{t-}^{-1}(x)}(\mM^r\mM^r(t,x)X_{t-}^{-1,i}(x))\\
  =&\pat^2_{ij}Y_{t-}(\wtil{x})|_{\wtil{x}=X_{t-}^{-1}(x)}[\mM^r(t,x)X_{t-}^{-1,i}(x)][\mM^r(t,x)X_{t-}^{-1,j}(x)]\\
  &+\pat_iY_{t-}(\wtil{x})|_{\wtil{x}=X_{t-}^{-1}(x)}(\mM^r\mM^r(t,x)X_{t-}^{-1,i}(x)),\\
  \mL(t,x)p(t-,x)=&\pat_iY_{t-}(\wtil{x})|_{\wtil{x}=X_{t-}^{-1}(x)}\mL(t,x)X_{t-}^{-1,i}(x)\\
  &+\frac{1}{2}\pat^2_{ij}Y_{t-}(\wtil{x})|_{\wtil{x}=X_{t-}^{-1}(x)}[\mM^r(t,x)X_{t-}^{-1,i}(x)][\mM^r(t,x)X_{t-}^{-1,j}(x)].\\
\end{split}
\end{equation}
So the triple $(p,q,r)$ is a classical solution to BSIPDE
\eqref{a1}.
\end{proof}
The following theorem is concerned with the uniqueness of the
classical solution to the system \eqref{a1}.
\begin{thm}\label{ethm2}
  Let the assumptions of Theorem \ref{ethm1} be satisfied. Let $(\wtil{p},\wtil{q},\wtil{r})$ be a classical solution to BSIPDE \eqref{a1}. Then we have
  \begin{eqnarray*}
    &\wtil{p}(t,X_t(x))=Y_t(x),&\\
    &\wtil{q}(t,X_{t-}(x))=Z_t(x)-\pat
    \wtil{p}(t-,X_{t-}(x))\sigma(t,X_{t-}(x)),&\\
    &\wtil{r}(t,e,\phi_{t,e}(X_{t-}(x)))=U_t(e,x)+\wtil{p}(t-,X_{t-}(x))-\wtil{p}(t-,\phi_{t,e}(X_{t-}(x)))&
  \end{eqnarray*}
  or equivalently,
  \begin{eqnarray*}
      &\wtil{p}(t,x)=Y_t(X_t^{-1}(x)),&\\
      &\wtil{q}(t,x)=Z_t(X_{t-}^{-1}(x))-\pat[Y_{t-}(X_{t-}^{-1}(x))]\sigma(t,x),&\\
      &\wtil{r}(t,e,x)=Y_{t-}(X_{t-}^{-1}(\phi_{t,e}^{-1}(x)))-Y_{t-}(X_{t-}^{-1}(x))+U_t(e,X_{t-}^{-1}(\phi_{t,e}^{-1}(x))).&
  \end{eqnarray*}
\end{thm}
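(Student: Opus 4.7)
The plan is to apply the It\^{o}-Wentzell formula of Lemma \ref{lemcc1} to the composition $\wtil{p}(t,X_t(x))$ and thereby show that the triple
\begin{equation*}
\begin{split}
\bar Y_t &:= \wtil{p}(t,X_t(x)),\\
\bar Z_t &:= \wtil{q}(t,X_{t-}(x))+\pat\wtil{p}(t-,X_{t-}(x))\sigma(t,X_{t-}(x)),\\
\bar U_t(e) &:= \wtil{p}(t-,\phi_{t,e}(X_{t-}(x)))-\wtil{p}(t-,X_{t-}(x))+\wtil{r}(t,e,\phi_{t,e}(X_{t-}(x)))
\end{split}
\end{equation*}
solves the same BSDE as $(Y(x),Z(x),U(x))$; uniqueness from Theorem \ref{dthm1} will then force $\bar Y=Y(x)$, $\bar Z=Z(x)$, $\bar U=U(x)$, and the claimed representations follow by reading off the $dt$, $dW_t$ and $\wtil{N}(dedt)$ coefficients (the second set of formulas then comes from substituting $x\mapsto X_t^{-1}(x)$ and using that $X_{t-}(\cdot)$ is also a homeomorphism, as remarked before the statement).

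First I would verify that Lemma \ref{lemcc1} applies: the regularity assumption $\mathbf{(A1)}$ is immediate from Definition \ref{edef1}(i)--(iii), and the local integrability conditions in $\mathbf{(A2)}$ follow from \eqref{e2} together with the linear/bounded growth of $b,\sigma,g$ assumed in Theorem \ref{thmc5}, using that $\sup_{0\leq t\leq T}|X_t(x)|<\infty$ a.s. so the relevant image of the forward flow lies in a (random) compact set. With the formula available, I would then collect the terms. The drift from the BSIPDE contributes $-\mathcal{L}\wtil{p}-\mathcal{M}\wtil{q}-f(\cdots)-\int_E[\wtil{r}(t,e,x)-\wtil{r}(t,e,\phi_{t,e}(x))+\wtil{p}(t-,x)-\wtil{p}(t-,\phi_{t,e}(x))]v(de)$, evaluated at $x=X_{s-}(x)$, while the It\^{o}-Wentzell cross terms contribute $\la\pat\wtil{p},b\ra+\tfrac12\ll\pat^2\wtil{p},\sigma\sigma^*\gg+\ll\pat\wtil{q},\sigma\gg$ plus the jump compensator $\int_E[\wtil{p}(s-,X_{s-}+g)-\wtil{p}(s-,X_{s-})-\la\pat\wtil{p},g\ra]v(de)ds$ and the analogous $\wtil{r}$-compensator $\int_E[\wtil{r}(s,e,X_{s-}+g)-\wtil{r}(s,e,X_{s-})]v(de)ds$.

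The key algebraic step is the cancellation. Using the definition of $\mathcal{L}$, the two $\tfrac12\sigma\sigma^*\pat^2\wtil{p}$ terms cancel and the drift combination leaves exactly $\int_E\la\pat\wtil{p},g\ra v(de)$, which cancels the $-\la\pat\wtil{p},g\ra$ inside the $\wtil{p}$-compensator; the remaining compensator $\int_E[\wtil{p}(s-,\phi_{s,e}(X_{s-}))-\wtil{p}(s-,X_{s-})]v(de)$ cancels the corresponding $\wtil{p}$-term coming from the original BSIPDE drift, and similarly the $\wtil{r}$-compensator cancels the original $\wtil{r}$-drift. For the $\mathcal{M}$ term, one notes $\ll\pat\wtil{q},\sigma\gg=\mathcal{M}\wtil{q}$ at $x=X_{s-}$, so it cancels with $-\mathcal{M}\wtil{q}$. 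On the martingale side, the $dW_s$ increments combine into $(\wtil{q}(s,X_{s-})+\pat\wtil{p}(s-,X_{s-})\sigma(s,X_{s-}))dW_s$, and the Poisson martingale becomes $\int_E[\wtil{p}(s-,\phi_{s,e}(X_{s-}))-\wtil{p}(s-,X_{s-})+\wtil{r}(s,e,\phi_{s,e}(X_{s-}))]\wtil{N}(deds)$, i.e.\ precisely $\bar Z_s dW_s+\int_E\bar U_s(e)\wtil{N}(deds)$.

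What is left in the $dt$-drift after all these cancellations is exactly $-f(s,X_{s-},\bar Y_s,\bar Z_s,\bar U_s(\cdot))$, by construction of the arguments of $f$ in the BSIPDE. Together with the terminal condition $\bar Y_T=\wtil{p}(T,X_T(x))=\varphi(X_T(x))$, this shows $(\bar Y,\bar Z,\bar U)$ satisfies the BSDE \eqref{d10}. The hypotheses of Theorem \ref{dthm3} ensure $(\bar Y,\bar Z,\bar U)$ has the integrability required for uniqueness in Theorem \ref{dthm1}, so $\bar Y=Y(x)$, $\bar Z=Z(x)$, $\bar U=U(x)$ as elements of the respective solution spaces. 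The main obstacle I anticipate is the careful bookkeeping of the cross terms in the It\^{o}-Wentzell formula (and verifying the integrability premises $\mathbf{(A2)}$ carefully enough that every interchange is legal), rather than any deep difficulty; once the identifications hold, reading off the three stated identities at $x\mapsto X_t^{-1}(x)$ finishes the proof.
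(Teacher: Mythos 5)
Your proposal is correct and follows essentially the same route as the paper: apply the It\^{o}-Wentzell formula to $\wtil{p}(t,X_t(x))$, observe that the resulting triple is an adapted solution of BSDE \eqref{d10}, and conclude by uniqueness from Theorem \ref{dthm1}. The paper states this in a single sentence, whereas you spell out the cancellation of the $\mL$, $\mM$ and compensator terms and the verification of $\mathbf{(A1)}$--$\mathbf{(A2)}$, all of which checks out.
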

\begin{proof}
  Using the It\^{o}-Wentzell formula to calculate
  $\wtil{p}(t,X_t(x))$, we see that
  \begin{equation*}
   \begin{split}
    &(\wtil{p}(t,X_t(x)),\wtil{q}(t-,X_{t-}(x))+\pat
    \wtil{p}(t-,X_{t-}(x))\sigma(t,X_{t-}(x)),\\
    &\wtil{r}(t,e,\phi_{t,e}(X_{t-}(x)))-\wtil{p}(t-,X_{t-}(x))+\wtil{p}(t-,\phi_{t,e}(X_{t-}(x))))
   \end{split}
  \end{equation*}
    is an adapted solution of BSDE \eqref{d10}. We have the desired
    result.
\end{proof}



\end{document}